\documentclass[11pt]{amsart}

\usepackage{amsmath,amsthm,verbatim,amssymb,amsfonts,amscd, graphicx}
\usepackage{graphics}
\usepackage{tikz-cd}
\usepackage{pinlabel}
\usepackage{xcolor}
\usepackage{stackrel}
\usepackage[margin=1.5in]{geometry}
\newtheorem{theorem}{Theorem}[section]
\newtheorem{corollary}[theorem]{Corollary}
\newtheorem{lemma}[theorem]{Lemma}
\newtheorem{proposition}[theorem]{Proposition}

\newtheorem{question}[theorem]{Question} 
 
\theoremstyle{definition}
\newtheorem{definition}[theorem]{Definition}
\newtheorem{remark}[theorem]{Remark}

\theoremstyle{definition}
\newtheorem{example}[theorem]{Example}
\newtheorem{move}[theorem]{Move}

\newcommand{\leftrarrows}{\mathrel{\raise.75ex\hbox{\oalign{%
  $\scriptstyle\leftarrow$\cr
  \vrule width0pt height.5ex$\hfil\scriptstyle\relbar$\cr}}}}
\newcommand{\lrightarrows}{\mathrel{\raise.75ex\hbox{\oalign{%
  $\scriptstyle\relbar$\hfil\cr
  $\scriptstyle\vrule width0pt height.5ex\smash\rightarrow$\cr}}}}
\newcommand{\Rrelbar}{\mathrel{\raise.75ex\hbox{\oalign{%
  $\scriptstyle\relbar$\cr
  \vrule width0pt height.5ex$\scriptstyle\relbar$}}}}

\makeatletter
\def\leftrightarrowsfill@{\arrowfill@\leftrarrows\Rrelbar\lrightarrows}
\newcommand{\xleftrightarrows}[2][]{\ext@arrow 3399\leftrightarrowsfill@{#1}{#2}}
\makeatother

\definecolor{violet}{rgb}{.6,.6,0}
\definecolor{green}{rgb}{.0,.8,0}

\newcommand{\Z}{\mathbb{Z}}
\newcommand{\R}{\mathbb{R}}

\newcommand{\CP}{\mathbb{C}P}

\newcommand{\F}{\mathbb{F}}

\let\int\relax
\newcommand{\int}{\mathring}

\newcommand{\boundary}{\partial}
\newcommand{\into}{\hookrightarrow}

\DeclareMathOperator{\pt}{{pt}}
\DeclareMathOperator{\fq}{{fq}}
\DeclareMathOperator{\km}{{km}}

\makeatletter
\DeclareRobustCommand\onedot{\futurelet\@let@token\@onedot}
\def\@onedot{\ifx\@let@token.\else.\null\fi\space}
\def\eg{{e.g}\onedot} 
\def\ie{{i.e}\onedot}

\def\vs{{vs}\onedot}

\makeatother

\title[Concordance of Surfaces and the Freedman-Quinn Invariant]{Concordance of Surfaces in 4-Manifolds and the Freedman-Quinn Invariant}

\author{Michael R. Klug}
\address{Department of Mathematics\\University of California, Berkeley\\  Berkeley, CA 94720, USA}
\email{michael.r.klug@gmail.com}

\author{Maggie Miller}
\address{Department of Mathematics\\Massachusetts Institute of Technology\\  Cambridge, MA 02139, USA}
\email{maggiehm@mit.edu}

\thanks{MK was supported by the Max Planck Institute for Mathematics (MPIM) in Bonn, Germany during the time of this project. MM was supported by MPIM during part of this project, as well as NSF Grants DGE-1656466 and DMS-2001675.}

\begin{document}

\maketitle

\begin{abstract}
We prove a concordance version of the 4-dimensional light bulb theorem for $\pi_1$-negligible compact orientable surfaces, where there is a framed but not necessarily embedded dual sphere.  That is, we show that if $F_0$ and $F_1$ are such surfaces in a 4-manifold $X$ that are homotopic and there exists an immersed framed 2-sphere $G$ in $X$ intersecting $F_0$ geometrically once, then $F_0$ and $F_1$ are concordant if and only if their Freedman-Quinn invariant $\fq$ vanishes. The proof of the main result involves computing $\fq$ in terms of intersections in the universal covering space and then applying work of Sunukjian in the simply-connected case. 
\end{abstract}

\section{Introduction}
  The main goal of this paper is to prove a concordance analogue of Gabai's 4-dimensional light bulb theorem. The 4-dimensional light bulb theorem strengthens homotopy of embedded 2-spheres $R$ and $R'$ in a 4-manifold $X^4$ to isotopy, given a dual sphere $G$ which intersects both $R$ and $R'$ exactly once and a condition on how the homotopy interacts with 2-torsion in $\pi_1(X)$. 
  
  In this paper we work in the smooth category unless otherwise specified. All manifolds are smooth and oriented; all maps between manifolds are smooth. At the end of \S\ref{sec:genus}, we remark on how results extend to the topological category.

In \S\ref{sec:3d} and \S\ref{sec:lightbulb} we discuss some context and motivation for our work.  In \S\ref{sec:fq} we discuss the Freedman-Quinn invariant in our context.  In \S\ref{sec:2spheres} we extend the work of Sunukjian and prove our main result for spheres (which follows from work of Stong as discussed below) and in \S\ref{sec:genus} we extend this to higher genus surfaces.  In \S\ref{sec:examples} we give examples to illustrate the necessity of the various conditions in the statement of our theorem.  In \S\ref{sec:questions} we discuss some remaining questions.  


\begin{definition}
Let $A$ and $B$ be $k$-dimensional submanifolds of an $n$-manifold $M^n$, $k \leq n$. Then $A$ and $B$ are {\emph{concordant}} if there exists a $k$-manifold $\Sigma$ and a smooth embedding $f:\Sigma\times I\hookrightarrow M\times I$ so that $f(\Sigma\times 0)=A\times 0$ and $f(\Sigma\times 1)=-B\times 1$. Note that if $A$ and $B$ are ambiently isotopic, then they are concordant.
\end{definition}

\begin{definition}
Given manifolds $V$ and $Z$, a map $f : V \to Z$ is called {\emph{$\pi_1$-negligible}} if the resulting map on $\pi_1$ is trivial. A connected submanifold $A$ of $Z$ is called {\emph{$\pi_1$-negligible}} if  $\pi_1(A)$ maps trivially to $\pi_1(Z)$ with respect to the map induced by inclusion.
\end{definition}

\begin{definition}
Given a surface $F$ embedded in a 4-manifold $X$, a \emph{framed dual} to $F$ is an immersed sphere $G$ in $X$ that intersects $F$ in a single point, such that the normal bundle of $G$ has even Euler number.  If the normal bundle of $G$ has odd Euler number, then we call $G$ an \emph{unframed dual}.

Given a three-manifold $Y$ embedded in a 5-manifold $W$, a \emph{framed dual} to $Y$ is an embedded sphere $G$ in $W$ that intersects $Y$ in a single point, such that the normal bundle of $G$ is trivial. If the normal bundle of $G$ is nontrivial, then we call $G$ an \emph{unframed dual}.

These two definitions are related -- if $G$ is a dual sphere for a surface $F$ in $X^4$ and $F$ is a boundary component of a 3-manifold $Y$ in $X\times I$, then after a small homotopy $G$ is a dual sphere for $Y$.
\end{definition}

The following is the main result of this paper:

\begin{theorem}\label{maintheorem}
Let $F_0$ and $F_1$ be two homotopic embedded orientable genus-g $\pi_1$-negligible surfaces in a 4-manifold $X^4$. Assume at there exists an immersed sphere $G$ in $X^4$ that is a framed dual to $F_0$.  Then $F_0$ and $F_1$ are concordant if and only if $\fq(F_0,F_1)=0$. 
\end{theorem}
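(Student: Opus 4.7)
The plan is to handle the ``only if'' direction as the standard concordance invariance of $\fq$, which is established in \S\ref{sec:fq}, and to attack the substantive ``if'' direction by lifting to the universal cover, invoking Sunukjian's work in the simply-connected case, and then using the vanishing of $\fq$ to push the resulting concordance back down to $X$.

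Let $\widetilde X\to X$ be the universal cover. Because $F_0$ and $F_1$ are $\pi_1$-negligible, the preimage of each $F_i$ in $\widetilde X$ is a disjoint union of embedded copies indexed by $\pi_1(X)$, and because $G$ is a sphere it lifts as well. Fix a base lift $\widetilde G$ of $G$; it is a framed dual in $\widetilde X$ to a unique lift $\widetilde F_0$ of $F_0$ and meets no other lift of $F_0$. A chosen homotopy $F_0\simeq F_1$ lifts to $\widetilde X$ and selects a distinguished lift $\widetilde F_1$ of $F_1$ that is homotopic to $\widetilde F_0$.

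Since $\widetilde X$ is simply connected, Sunukjian's theorem on homotopic surfaces of the same genus in simply-connected 4-manifolds produces a smooth concordance $\widetilde C\subset\widetilde X\times I$ from $\widetilde F_0$ to $\widetilde F_1$. It remains to modify $\widetilde C$ by equivariant moves until it is disjoint from every nontrivial deck translate $g\cdot\widetilde C$; the projection to $X\times I$ of such a $\widetilde C$ will then be the desired embedded concordance from $F_0$ to $F_1$. The intersections $\widetilde C\cap g\cdot\widetilde C$ split by the deck label $g\in\pi_1(X)\setminus\{e\}$; those labelled by $g$ with $g\neq g^{-1}$ pair with those labelled by $g^{-1}$ and cancel in signed count by $\pi_1$-negligibility and orientability, so are removable by equivariant Whitney moves, while those labelled by a 2-torsion $g$ are only $\Z/2$-counted. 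By the universal-cover reformulation of $\fq$ carried out in \S\ref{sec:fq}, these 2-torsion $\Z/2$-counts together assemble precisely into $\fq(F_0,F_1)$, so the hypothesis $\fq(F_0,F_1)=0$ guarantees they also pair up for cancellation. Intersections of the resulting Whitney disks with $\widetilde C$ or its translates are removed by tubing into parallel copies of translates of $\widetilde G$, the framing hypothesis on $G$ ensuring that these tubes are untwisted.

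The main obstacle will be the equivariant tubing step: because $\widetilde G$ is only immersed, tubing Whitney disks into copies of it can create fresh self-intersections, which must then be removed without disrupting the $\pi_1(X)$-symmetry of the construction. Modelling the argument on Gabai's light bulb theorem and its higher-genus refinements, and showing that the procedure terminates in an embedded, $\pi_1(X)$-equivariant concordance whose projection to $X\times I$ is the desired smooth concordance from $F_0$ to $F_1$, is the technical heart of the proof.
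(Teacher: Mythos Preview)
Your outline has the right shape (lift to the universal cover, remove intersections between deck translates equivariantly, invoke $\fq=0$ for the 2-torsion contributions, project down), but there is a genuine dimensional confusion that makes the mechanism you describe inapplicable.

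In $\widetilde X\times I$, which is $5$-dimensional, two $3$-manifold concordances $\widetilde C$ and $g\cdot\widetilde C$ meet generically in \emph{circles}, not points. There is no ``signed count'' of such intersections, and Whitney moves are not the tool that removes them. The paper works in this $5$-dimensional setting and instead uses \emph{ambient Dehn surgery}: one surgers the $3$-manifolds along $4$-dimensional $1$- and $2$-handles (Moves~\ref{move1} and~\ref{move2}) to unknot and then erase each intersection circle. This is where the framed dual $G$ actually enters: the framing hypothesis is needed not to ``untwist tubes on Whitney disks,'' but to flip the parity of the surgery framing on a $2$-handle when the naive framing is not admissible (Lemma~\ref{lem:framing}, Lemma~\ref{lem:tubetoframed}, Move~\ref{move1}). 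Your description of both the removal mechanism and the role of $G$ therefore does not match what is needed.

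A second, smaller gap: the vanishing $\fq(F_0,F_1)=0$ only says that the number of $g$-fixed intersection circles is even \emph{modulo} $\mu_3(\pi_3(X))$. Before any geometric cancellation can be attempted, one must first alter the immersion by surgering in an element of $\pi_3(X)$ so that the raw count of $g$-fixed circles is even (Proposition~\ref{prop:makezero}); only then can one pair them via equivariant $1$-surgeries. Starting from an arbitrary Sunukjian concordance $\widetilde C$ in $\widetilde X\times I$ rather than from the equivariant lift of the homotopy track is not wrong in principle, but it does not sidestep this step: the projection of $\widetilde C$ to $X\times I$ is just another immersed $\Sigma_g\times I$, and you face the identical problem of equivariantly separating its lifts. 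The paper's choice to begin with the homotopy track is convenient precisely because equivariance is built in from the start and the $\pi_3$-modification is transparent.
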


We will first prove Theorem \ref{maintheorem} in the case that $F_0$ and $F_1$ are 2-spheres (in \S\ref{sec:2spheres}) and then extend to positive-genus surfaces in \S\ref{sec:genus}. In \S\ref{sec:examples}, we give three explicit examples showing: \begin{itemize}\item The necessity of $\fq(F_0,F_1)=0$ (Example \ref{hannahexample}, due to Schwartz \cite{Hannah}),
\item The sphere $G$ being framed (Example \ref{stongexample}, constructed by the authors using Stong's \cite{stong} $\km$ invariant),
\item The conclusion of concordance rather than isotopy (Example \ref{satoexample}, due to Sato \cite{Sato}).
\end{itemize}

We make use of recent work of Schneiderman and Teichner, who discuss the invariant $\fq$ associated to a pair of based-homotopic 2-spheres in a 4-manifold (this invariant originally appeared in work of Freedman-Quinn \cite{fq} - hence the name). We define $\fq$ for based homotopic 2-spheres in \S\ref{sec:fq} (and discuss free homotopy \vs based homotopy in \S\ref{sec:htpy}). We extend to positive-genus surfaces whose fundamental groups include trivially into the ambient 4-manifold in \S\ref{sec:genus}.

The original version of this paper included the statement of Theorem \ref{maintheorem} for spheres as the main theorem and the extension to higher genus surfaces as an application.  Later, we were made aware of some relevant work of Freedman in Quinn in chapter 10 of \cite{fq} - namely Theorem 10.9, and the subsequent correction and extension of this work by Stong in \cite{stong}.  In particular, Theorem \ref{maintheorem} for spheres follows from the work of Stong in \cite{stong} (which builds on and corrects \cite{fq}).  The authors are currently preparing an exposition/interpretation of Stong's work directly in the context of constructing/obstructing concordances of surfaces in 4-manifolds.  A modification of Stong's proof also gives another proof of Theorem \ref{maintheorem} -- although using rather different techniques compared to what we use in this paper, as we will explain in our forthcoming work.  In the context of spheres in a 4-manifold, Stong's work identifies two obstructions to two homotopic spheres being concordant: $\fq$ and a secondary obstruction which Stong calls the 5-dimensional Kervaire-Milnor invariant.  Moreover, Stong shows that when these obstructions vanish, a concordance exists.  It is useful to note that when either sphere has a framed immersed geometrically-dual sphere (as in Theorem \ref{maintheorem}), the secondary Kervaire-Milnor invariant automatically vanishes. More generally, this Kervaire-Milnor invariant vanishes if either sphere is not $s$-characteristic -- i.e. intersects some immersed 2-sphere $R$ in $X^4$ with parity not equal to $R\cdot R\pmod{2}$.

Theorem \ref{maintheorem} has the following corollary (suggested by Sunukjian in \cite{NS} and in personal communication).
\begin{corollary}\label{cor:top}
Let $F_0$ and $F_1$ be homotopic genus-g orientable surfaces embedded in a 4-manifold $X^4$, with $\pi_1(X^4)$ good in the sense of Freedman and Quinn \cite{fq}. Assume all of the following:
\begin{itemize}
    \item $\pi_1(F_i)$ maps trivially to $\pi_1(X)$ under inclusion for $i=0,1$,
    \item there exists a framed immersed sphere in $X^4$ that intersects $F_0$ geometrically once,
    \item there exists a (potentially unframed immersed sphere in $X^4$ that intersects $F_1$ geometrically once,
    \item $\fq(F_0,F_1)=0$.
\end{itemize}
Then there exists a homeomorphism of pairs $(X,F_0)\cong (X,F_1)$.
\end{corollary}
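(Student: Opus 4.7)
The plan is to apply Theorem~\ref{maintheorem} to produce a smooth concordance between $F_0$ and $F_1$, and then to promote this concordance to a topological ambient homeomorphism using 5-dimensional techniques that are valid when $\pi_1(X)$ is good.

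First, the hypotheses of Theorem~\ref{maintheorem} are satisfied: the surfaces are $\pi_1$-negligible and homotopic, $F_0$ has a framed immersed dual sphere $G_0$, and $\fq(F_0,F_1)=0$. So Theorem~\ref{maintheorem} produces a smooth concordance $C \subset X \times I$ with $C \cap (X \times \{0\}) = F_0 \times \{0\}$ and $C \cap (X \times \{1\}) = F_1 \times \{1\}$. Next, I would construct embedded dual spheres for $C$ in the $5$-manifold $X \times I$: push $G_0$ into $X \times [0,\varepsilon]$ and perturb generically. Because $X \times I$ is $5$-dimensional and $2+2 < 5$, self-transversality makes the perturbation an embedding, and by supporting the perturbation near $G_0 \cap F_0$ we may ensure that it meets $C$ transversally in a single point. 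This produces a smoothly embedded framed sphere $\widetilde{G_0} \subset X \times I$ geometrically dual to $C$. The same procedure applied to the dual of $F_1$ gives an embedded (possibly unframed) dual sphere $\widetilde{G_1}$ for $C$ near $X \times \{1\}$.

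Viewing $(X \times I, C)$ as a $3$-manifold concordance in a $5$-manifold with embedded dual spheres on both ends and with $\pi_1(X \times I) = \pi_1(X)$ good, I would then apply a topological analogue of the Sunukjian-type ``concordance with dual sphere implies product'' argument of \cite{NS} to obtain an ambient topological isotopy of $X \times I$ carrying $C$ to $F_0 \times I$ rel $X \times \{0\}$. Restricting this isotopy at time $1$ yields the desired homeomorphism of pairs $(X, F_1) \cong (X, F_0)$.

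The technical heart of the proof is the last step: promoting the smooth concordance with embedded duals to a topological ambient isotopy. In the simply connected setting treated in \cite{NS} one constructs Whitney disks pairing up the intersections that appear while trying to isotope $C$ into a product neighborhood, and then caps these disks off geometrically using the embedded dual spheres to remove their intersections with $C$. Outside the simply connected case the Whitney disks need not admit smooth embeddings, but under the good-$\pi_1$ hypothesis Freedman's topological disk embedding theorem supplies flat topological embeddings of the required Whitney disks (with the dual spheres providing the needed geometric duals), which is enough to build the topological ambient isotopy and hence the homeomorphism of pairs.
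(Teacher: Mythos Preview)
Your first step---applying Theorem~\ref{maintheorem} to obtain a smooth concordance $C\subset X\times I$---is correct and is exactly how the paper begins. After that, however, the paper proceeds quite differently, and your proposed route has genuine gaps.

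The paper's argument is an $s$-cobordism argument on the \emph{complement} of $C$. Using the (possibly unframed) duals to $F_0$ and $F_1$ to see that the meridian of each $F_i$ is nullhomotopic in $X\setminus F_i$, one checks that $\pi_1(X\times I\setminus\nu C)\cong\pi_1(X)$ and, via Mayer--Vietoris in the universal cover, that $X\times I\setminus\nu C$ is an $h$-cobordism from $X\setminus\nu F_0$ to $X\setminus\nu F_1$. Additivity of Whitehead torsion (the total space $X\times I$ and the piece $\nu C$ are products) forces this $h$-cobordism to be an $s$-cobordism. The 5-dimensional \emph{topological} $s$-cobordism theorem, valid precisely when $\pi_1$ is good, then gives a product structure on the complement and hence the homeomorphism of pairs. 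This is also the argument in \cite{NS}.

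Your proposal instead tries to isotope $C$ directly to $F_0\times I$ via a ``Whitney disk / light bulb'' mechanism. There are two problems. First, the attribution is off: the ``concordance implies homeomorphism'' step in \cite{NS} is exactly the $s$-cobordism argument above, not a Whitney-move isotopy of $C$; so you cannot simply cite \cite{NS} for this step. Second, your invocation of Freedman's disk embedding theorem is misplaced. That theorem is a 4-dimensional statement; in the 5-manifold $X\times I$, a generic 2-disk is already smoothly embedded by transversality, so there is no obstruction of the kind you describe and no role for the good-group hypothesis at that point. The good-group hypothesis enters only through the 5-dimensional topological $s$-cobordism theorem (equivalently, through the 4-dimensional disk embedding theorem applied inside the level sets of the cobordism), which is precisely the machinery you have bypassed. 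As written, your last paragraph does not supply a mechanism that actually produces the topological product structure.
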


In this paper, Corollary \ref{cor:top} is the only time we leave the smooth category. A version of this corollary originally appeared in \cite{NS} (for $\pi_1(X)$ with no 2-torsion) and later in \cite{Maggie} (when $F_0$ is a 2-sphere and its dual is embedded). We restate the proof of Sunukjian \cite{NS} here.

\begin{proof}
By Theorem \ref{maintheorem}, there is a concordance $C$ between $F_0$ and $F_1$ in $X\times I$. Let $\widetilde{X}$ denote the universal cover of $X$ and let $\widetilde{C}$ be the union of all lifts of $C$ in $\widetilde{X}\times I$. Let $\widetilde{F}_i$ denote the union of all lifts of $F_i$. Since the meridian of $F_i$ is nullhomotopic in $X-\mathring{N}(F_i)$, the cover $\widetilde{X}\times I-\mathring{N}(\widetilde{F_i})$ is simply-connected and hence is the universal cover of $X-\mathring{N}(F_i)$. Similarly, $\widetilde{X}\times I-\mathring{N}(\widetilde{C})$ is the universal cover of $X\times I-\mathring{N}(C)$. An application of the Mayer-Vietoris theorem yields that $\widetilde{X}\times I-\mathring{N}(\widetilde{C})$ is an $h$-cobordism from $\widetilde{X}-\mathring{N}(\widetilde{F}_0)$ to $\widetilde{X}-\mathring{N}(\widetilde{F}_1)$. Therefore, $X\times I-\mathring{N}(C)$ is an $h$-cobordism from $X-\mathring{N}(F_0)$ to $X-\mathring{N}(F_1)$. Since $\mathring{N}(C)$ and $(X\times I-\mathring{N}(C))\cup \mathring{N}(C)=X\times I$ are both products and Whitehead torsion is additive, we conclude that $X\times I-\mathring{N}(C)$ has vanishing Whitehead torsion and hence is an $s$-cobordism. Finally, since $\pi_1(X\times I-\mathring{N}(C))\cong\pi_1(X)$ is a good group, $X\times I-\mathring{N}(C)$ is topologically a product \cite{fq}. This product structure yields a homeomorphism $(X,F_0)\cong (X,F_1)$.
\end{proof}

\section{Three-dimensional motivation}\label{sec:3d}
Both the light bulb theorem and the concordance analogue are motivated by 3-dimensional theorems in $S^2\times S^1$. For notational purposes, let $J:=\pt\times S^1$ and $G:=S^2\times\pt$ in $S^2\times S^1$. 
\begin{theorem}[3-dimensional light bulb trick (folklore)]
Let $K$ be a knot smoothly embedded in $S^2\times S^1$ intersecting $G$ transversally exactly once. Then $K$ is isotopic to $J$.
\end{theorem}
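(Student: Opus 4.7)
The plan is to cut $S^2 \times S^1$ open along $G$, convert $K$ into a properly embedded arc in $S^2 \times I$, and then straighten that arc.

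After a small isotopy we may assume $K$ meets $G$ transversely at a single point $p = (x_0, 0)$ with $x_0 \in S^2$. Cutting $S^2 \times S^1$ along $G$ produces $S^2 \times I$, and $K$ opens up into a properly embedded arc $\alpha$ with endpoints $(x_0, 0)$ and $(x_0, 1)$. Let $\gamma := \{x_0\} \times I$; any ambient isotopy of $\alpha$ to $\gamma$ rel boundary in $S^2 \times I$ closes up to an ambient isotopy in $S^2 \times S^1$ carrying $K$ to $\{x_0\} \times S^1$, and the latter circle is isotopic to $J$ by sliding $x_0$ to the basepoint along any path in $S^2$.

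The core step is to show that any properly embedded arc $\alpha \subset S^2 \times I$ with endpoints $(x_0, 0)$ and $(x_0, 1)$ is ambient isotopic rel boundary to $\gamma$. For this I would put $\alpha$ in general position with respect to the height function $h \colon S^2 \times I \to I$ and induct on the number of critical points of $h|_{\alpha}$. In the base case $\alpha$ is monotone and is the graph of a path $\phi \colon I \to S^2$ with $\phi(0) = \phi(1) = x_0$; since $\pi_1(S^2) = 0$, a null-homotopy of $\phi$ rel endpoints produces, through graphs, an ambient isotopy from $\alpha$ to $\gamma$. For the inductive step I would cancel a consecutive max--min pair along $\alpha$ -- a local maximum at height $t_{\max}$ followed immediately along $\alpha$ by a local minimum at height $t_{\min} < t_{\max}$ -- by finding a spanning disk in a level sphere $S^2 \times \{t\}$ with $t \in (t_{\min}, t_{\max})$ and pushing the intervening subarc of $\alpha$ across this disk to remove both critical points simultaneously.

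The main obstacle is that the pushing disk must be taken disjoint from the other strands of $\alpha$ that happen to pass through the slab $S^2 \times [t_{\min}, t_{\max}]$. The essential observation -- this is where the 2-sphere structure of each level really matters -- is that $S^2$ minus any finite set remains connected, so the disk's interior can be routed around any finite collection of offending intersections. Together with the fact that those other strands are themselves monotone through the slab (by the choice of a \emph{consecutive} critical pair), this provides enough room to carry out the 3-dimensional isotopy. Iterating the inductive step eliminates all critical points and reduces to the monotone base case, completing the straightening.
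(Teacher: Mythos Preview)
Your overall plan is sound and the intermediate claim you aim for---that every properly embedded arc in $S^2\times I$ joining the two boundary spheres is ambient isotopic rel boundary to a vertical fiber---is true and does imply the theorem. However, your route is quite different from the paper's, and your inductive step has two soft spots.

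The paper gives the classical ``light bulb'' argument: after making $K$ agree with $J$ near $G$, the two arcs in the complement of $G$ are homotopic and hence differ by a finite sequence of crossing changes; each crossing change is then realized as an honest ambient isotopy by dragging one of the two strands all the way around a level $2$-sphere (Figure~\ref{fig:trick}). That is the whole proof---the sphere $G$ lets you pass one strand through another by going the long way around.

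Your Morse-theoretic reduction is a legitimate alternative, but two points need repair. First, the assertion that ``those other strands are themselves monotone through the slab (by the choice of a consecutive critical pair)'' is false: consecutive along $\alpha$ says nothing about critical \emph{heights} of distant portions of $\alpha$. For example, two consecutive max--min pairs at heights $(0.8,0.4)$ and $(0.6,0.2)$ each have a critical value of the other pair in their slab. This does not kill the argument---other strands still meet each level $S^2$ in finitely many points---but you should not invoke monotonicity. Second, a ``spanning disk in a level sphere $S^2\times\{t\}$'' is not the right object to cancel a max--min pair; what you actually need is an embedded $2$-disk in the $3$-dimensional slab $S^2\times[t_{\min},t_{\max}]$ whose boundary is a subarc of $\alpha$ together with a monotone arc (equivalently, a continuous family of arcs $\eta_t\subset S^2\times\{t\}$ guiding the cup of the minimum up past the maximum while avoiding the other strands). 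Constructing that family is precisely where the $S^2$ enters, and once made precise it is essentially a fibered version of the paper's ``slide a strand around the sphere'' move. So your approach ultimately repackages the same geometric idea, at the cost of a longer argument; the paper's crossing-change formulation is the quicker path.
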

\begin{proof}

Isotope the knot $K$ to agree with $J$ near $G$. In the complement of $G$, $K$ and $J$ are homotopic arcs related by a sequence of crossing changes. Using the dual $G$, each of these crossing changes can be achieved as isotopy rather than homotopy. See Figure \ref{fig:trick}.
\end{proof}

\begin{figure}
	  \labellist
\small\hair 2pt
 \pinlabel $S^2\times S^1$ at 0 150
  \pinlabel $G=S^2\times\pt$ at -40 55
 \pinlabel $K$ at 135 130
\endlabellist
\includegraphics[width=110mm]{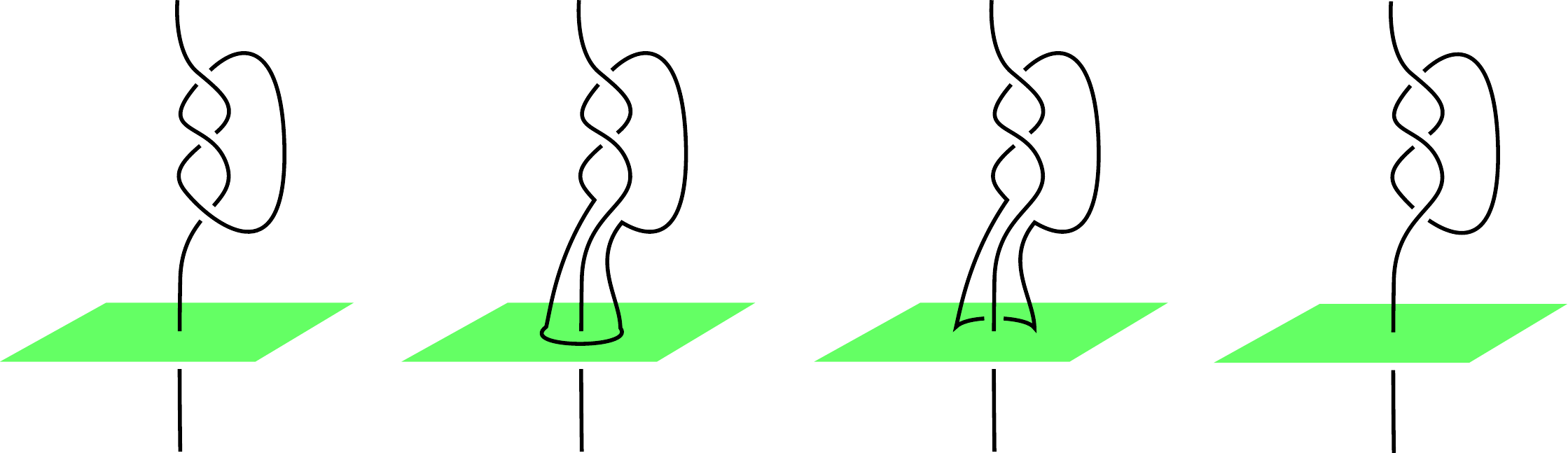}
\caption{The 3-dimensional light bulb trick allows us to realize crossing changes of $K$ by isotopy.}\label{fig:trick}
\end{figure}

\begin{theorem}[\cite{yildiz}, \cite{dnpr}]
Let $K$ be a knot smoothly embedded in $S^2\times S^1$ in the homology class $[\pt\times S^1]$. Then $K$ is concordant to $J$.
\end{theorem}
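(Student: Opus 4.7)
My plan is to upgrade a free homotopy from $K$ to $J$ into an embedded concordance, using the sphere $G = S^2 \times \pt$ to resolve the self-intersections of the trace. Since $[K] = [J] \in H_1(S^2 \times S^1) \cong \Z$ and $\pi_1(S^2 \times S^1)$ is abelian, $K$ is freely homotopic to $J$; pick a generic smooth homotopy $H : S^1 \times I \to S^2 \times S^1$ from $K$ to $J$, and form its graph
\[
A := \{(H(x,t), t) : x \in S^1,\ t \in I\} \subset (S^2 \times S^1) \times I.
\]
Then $A$ is a generically immersed annulus with embedded boundary $K \sqcup -J$ and isolated transverse double points, one for each moment at which $H_t$ has a self-crossing. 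Choose $t_0 \in (0,1)$ with $H_{t_0}^{-1}(G)$ a single point (possible because $|J \cap G| = 1$ and this count is locally constant in $t$). Then $G \times \{t_0\}$ is an embedded, framed 2-sphere in $(S^2 \times S^1) \times I$ meeting $A$ transversely in exactly one point, and $\pi_1((S^2 \times S^1) \times I) \cong \Z$ is torsion-free.

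Next I would resolve the double points of $A$ by Whitney moves. Because $A$ is orientable with embedded boundary, its double points pair up with opposite signs; by lifting to the universal cover $\R \times S^2 \times I$ (where $A$ lifts to strips running between adjacent lifts of $G$) I can arrange each pairing so that its Whitney loop has vanishing winding in $\Z = \pi_1$ and therefore bounds an immersed Whitney disk $W$. The framed dual $G \times \{t_0\}$ then serves as a clean-up tool in the style of Norman's trick: any interior intersection of $W$ with $A$ can be removed by tubing $W$ to a parallel copy $G \times \{t_0 + \epsilon\}$ along an arc in $A$, and since this parallel copy meets $A$ in exactly one point, no new intersections of $W$ with $A$ are created. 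After all Whitney disks have been made clean and disjoint from $A$ in their interiors, performing the Whitney moves produces an embedded annulus, which is the desired concordance.

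The main obstacle will be carrying out all the cleanup steps simultaneously --- making tubed Whitney disks disjoint from each other as well as from $A$, and ensuring the pairings of double points chosen in the universal cover are mutually compatible. I expect this to go through because $\pi_1$ has no 2-torsion (which in the framework of this paper is precisely the hypothesis that makes a Freedman-Quinn-style secondary obstruction vanish automatically), and because $G$ is framed and geometrically dual to $A$ with $|G \cap A| = 1$, supplying exactly the flexibility needed for the 4-dimensional analog of the light bulb argument sketched in the preceding proof.
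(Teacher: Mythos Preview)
Your approach is genuinely different from the one the paper presents (following Yildiz). The paper's argument avoids Whitney moves entirely: at each moment of the homotopy where a crossing change would occur, one instead attaches a band to the trace that splits off a small meridian of $J$. The resulting embedded surface is planar, with boundary $K$ on one end and $J$ together with finitely many meridians of $J$ on the other; since these meridians form an unlink bounding disjoint disks, capping them off gives an embedded annulus. This is elementary and three-dimensional in flavor, requiring no pairing of double points, no Whitney disks, and no dual-sphere tricks.

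Your Whitney-move strategy can be made to work, but two steps are not justified as written. First, the assertion that ``because $A$ is orientable with embedded boundary, its double points pair up with opposite signs'' is false in general: a homotopy consisting of a single crossing change already produces a trace with exactly one double point. You can repair this by inserting local cusps into the homotopy (a Reidemeister~I move undone by a crossing change rather than by its inverse), each of which adds one double point of a chosen sign, until the signed count is zero. Second, you do not address the framing of the Whitney disks. This can also be fixed --- a boundary twist changes the framing by one at the cost of a new interior intersection of $W$ with $A$, which you then tube away along $A$ to a parallel of $G$; since $G$ is framed this does not reintroduce a framing problem --- but it should be said explicitly. Once these points are handled, note that $\pi_1(A)\to\pi_1((S^2\times S^1)\times I)$ is surjective, so the group element attached to each double point is completely indeterminate and every Whitney circle can be arranged to be nullhomotopic; the lift-to-universal-cover discussion is therefore unnecessary, and the pairing step is easier than you suggest.
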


We summarize the proof of Yildiz \cite{yildiz}.
\begin{proof}

Since $\pi_1(S^2\times S^1)\cong\Z$ is abelian, $K$ is homotopic to $J$ via some homotopy $H$. We show how to modify the track of the homotopy which we also call $H : S^1 \times I \to (S^2 \times S^1) \times I$ so that it is embedded. The levels where the track of the homotopy is not an embedding are the levels where crossing changes occur.   

In Figure \ref{fig:yildiz}, we show how to change the track of the homotopy along 2-dimensional 1-handles (bands) together with 2-dimensional 2-handles (disk) to obtain an embedding of $S^1 \times I$.  
Each time a crossing change occurs in the track of $H$, we change the map so that a 1-handle (band) is added with the effect that the crossing change is achieved but a small meridian disk is also introduced.  This small meridian disk is then carried throughout the rest of the new map.  By considering the Euler characteristic, we see that this results in a new map that is a planar surface in  $(S^2 \times S^1) \times I$ with boundary on one end the original knot $K$ and one the other end $J$ together with several small meridians of $J$.  These meridians form an unlink and so by capping off the planar surface with disjoint disks bounding all of these meridians, we obtain the desired cobordism.  

\end{proof}

\begin{figure}
\vspace{.2in}
 	  \labellist
\small\hair 2pt
 \pinlabel $S^2\times S^1$ at -75 200
  \pinlabel $G$ at -20 50
 \pinlabel $K$ at 75 210
\endlabellist
    \includegraphics[width=110mm]{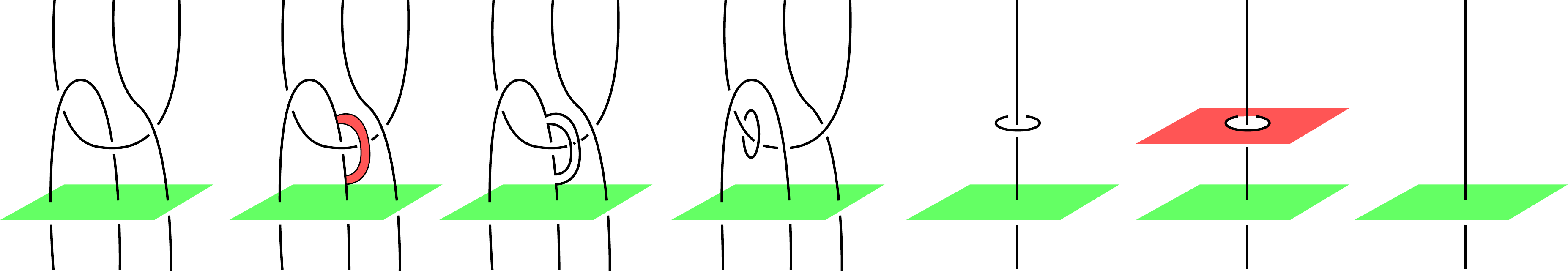}
    \caption{When $K\subset S^2\times S^1$ intersects $G=S^2\times\pt$ algebraically once, we can achieve crossing changes of $K$ by concordance.}
    \label{fig:yildiz}
\end{figure}

\section{Four-dimensional motivation}\label{sec:lightbulb}

In this section, we describe past results 
related to our main theorem to 
give context to our work. This section is not necessary to understand the proof of Theorem \ref{maintheorem}. 

In 2017, Gabai \cite{Dave} proved the following theorem about isotopy of surfaces in 4-manifolds.
\begin{theorem}[4D light bulb theorem, \cite{Dave}]\label{thm:lightbulboriginal}
Let $S_0,S_1,$ and $G$ be 2-spheres smoothly embedded in a smooth 4-manifold $X^4$. Assume that $S_0$ and $S_1$ are homotopic, $G$ has trivial normal bundle, and $S_0$ and $S_1$ both transversally intersect $G$ in exactly one point. Finally, assume $\pi_1(X)$ has no 2-torsion. Then $S_0$ and $S_1$ are ambiently smoothly isotopic.
\end{theorem}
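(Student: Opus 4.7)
The plan is to upgrade a homotopy from $S_0$ to $S_1$ into an ambient isotopy, using $G$ as a geometric ``dual lever'' to trade self-intersections of the homotopy for intersections with $G$, and then using the no-2-torsion hypothesis to cancel the latter. I would begin by normalizing position near $G$: since $G$ has trivial normal bundle and $S_0, S_1$ each meet $G$ transversally in a single point, small isotopies place $S_0 \cap N(G) = S_1 \cap N(G) = \{pt\}\times D^2$ inside a tubular neighborhood $N(G)\cong S^2\times D^2$. The problem then reduces to ambiently isotoping $S_0$ to $S_1$ rel this common disk in $X\setminus \mathring{N}(G)$, where one can still see a parallel copy of $G$ as a transverse sphere.

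Next, I would take a generic regular homotopy $\{S_t\}$ from $S_0$ to $S_1$. Its singularities consist of finitely many finger-move/Whitney-move pairs, so the track in $X\times I$ is an immersed annulus $A$ with transverse double points. Each double point carries a Whitney circle in $A$ representing an element $g\in\pi_1(X)$, and double points can be paired for a Whitney move only if their associated elements agree up to inversion. The 4-dimensional light bulb idea is then: for each double point of $A$, perform a ``tube through $G$'' move in $X\times I$ along a parallel copy of $G\times\{t\}$. This replaces the self-intersection with an embedded feature at the cost of introducing a pair of new intersections of $A$ with the parallel $G$; those can be pushed off along $G$ using its trivial normal bundle (the Norman trick) since $G$ is a sphere. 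The outcome is an embedded concordance from $S_0$ to $S_1$ in $X\times I$ equipped with a product transverse sphere $G\times I$.

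The serious input is then converting this embedded concordance into an isotopy, and it is here that the hypothesis that $\pi_1(X)$ has no 2-torsion does its real work. When pairing double points of $A$ for a Whitney move, the only algebraic obstruction comes from unpaired double points whose Whitney elements satisfy $g=g^{-1}$ in $\pi_1(X)$, i.e.\ represent 2-torsion; the no-2-torsion hypothesis forces all double points to pair up in $\pm g, \pm g^{-1}$ doublets that admit Whitney disks. With the transverse sphere $G$, these Whitney disks can be tubed to be embedded and disjoint from the rest of the picture, so the Whitney moves can be realized as honest isotopies of the sphere-level slices rather than as jumps in the concordance. A standard uniqueness-of-concordance-with-transverse-sphere argument (running the resolved singularities back down through the concordance one by one) then assembles these level-wise isotopies into an ambient smooth isotopy from $S_0$ to $S_1$.

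The main obstacle I anticipate is step three: carefully tracking how tubings along $G$ to resolve double points of $A$ interact with the pairing of Whitney circles, and checking that the no-2-torsion hypothesis is both necessary and sufficient to kill the obstruction living in the quotient $\mathbb{Z}[\pi_1(X)]/\{g+g^{-1}\}$ that controls whether an immersed Whitney disk can be upgraded to an embedded one. The other technical point is ensuring that the final passage from embedded concordance plus transverse sphere to ambient isotopy is genuinely smooth, which requires invoking Gabai's ``uniqueness of tubed surfaces'' type argument rather than a mere $h$-cobordism or $s$-cobordism theorem.
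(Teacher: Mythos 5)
This theorem is quoted from Gabai \cite{Dave}; the paper contains no proof of it, so your sketch can only be measured against Gabai's argument. Your outline has the right overall shape --- normalize both spheres near $G$, factor a generic homotopy into finger moves followed by Whitney moves, tube the resulting double points to parallel copies of $G$, and use the absence of 2-torsion to control the pairing of double points --- but two points need repair.

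First, a dimensional slip: the track of a homotopy $S^2\times I\to X^4\times I$ is an immersed 3-manifold in a 5-manifold, so its generic self-intersections are circles, not isolated double points of an ``immersed annulus.'' The objects carrying isolated transverse double points, Whitney circles, and group elements $g\in\pi_1(X)$ are the middle-level immersed 2-spheres in $X^4$ obtained after performing the finger moves; Gabai's analysis, and the pairing obstruction that survives in $\mathbb{F}_2T_X$, live there. Second, and more seriously, your concluding step is circular: there is no ``standard uniqueness-of-concordance-with-transverse-sphere argument'' available as an input. In the smooth category concordance does not imply isotopy, and the statements that upgrade an embedded concordance (or a tubed surface) to an ambient isotopy in the presence of a common geometric dual are themselves the content of, or consequences of, the light bulb machinery. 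Gabai's proof does not pass through an embedded concordance in $X\times I$ at all; its heart is the ``uniqueness of tubed surfaces'' theorem, proved directly in $X^4$, showing that the embedded sphere obtained by tubing the double points of the middle level into parallel copies of $G$ is independent, up to ambient isotopy, of all choices (arcs, pairings, order of tubes) --- and it is precisely in comparing two different pairings of double points that the no-2-torsion hypothesis is used. Deferring that step to a black box leaves the essential content of the theorem unproved.
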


In Theorem \ref{thm:lightbulboriginal}, the assumption that $\pi_1(X)$ has no 2-torsion is essential. Schwartz \cite{Hannah} later gave explicit examples of triples $(S_0,S_1,G)$ of 2-spheres in a 4-manifold $X$ with $\pi_1(X)\cong\Z/2$ that satisfy the other hypotheses of Theorem \ref{thm:lightbulboriginal}, but yet $S_0$ and $S_1$ are not isotopic. We discuss these examples in \S \ref{sec:examples}.


The second author then proved a concordance version of the theorem, explicitly using the 4-dimensional light bulb theorem. 
\begin{theorem}[\cite{Maggie}]\label{thm:maggie}
Let $S_0,S_1,$ and $G$ be 2-spheres smoothly embedded in a smooth 4-manifold $X^4$. Assume that $S_0$ and $S_1$ are homotopic, $G$ has trivial normal bundle, and $S_0$ transversally intersects $G$ in exactly one point. Finally, assume $\pi_1(X)$ has no 2-torsion. Then $S_0$ and $S_1$ are smoothly concordant.
\end{theorem}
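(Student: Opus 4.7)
The strategy is to reduce to Gabai's light bulb theorem (Theorem \ref{thm:lightbulboriginal}) by first producing a concordance from $S_1$ to a sphere $S_1^{\star}$ that intersects $G$ transversally in exactly one point. Once this is done, Theorem \ref{thm:lightbulboriginal} applied to the homotopic pair $(S_0, S_1^{\star})$ yields a smooth ambient isotopy, which combined with the concordance from $S_1$ to $S_1^{\star}$ produces the desired concordance from $S_0$ to $S_1$.

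To construct the concordance between $S_1$ and $S_1^{\star}$, first perturb $S_1$ to be transverse to $G$ so that $|S_1 \cap G| = 2k+1$. Since $S_1$ is homotopic to $S_0$ and $|S_0 \cap G| = 1$, the equivariant intersection $\lambda(S_1, G) \in \Z[\pi_1(X)]$ equals the trivial group element $1 \cdot e$; consequently the $2k$ extra intersection points can be grouped into pairs $(p_i^+, p_i^-)$ of opposite sign whose associated group elements (the loops going from a basepoint out along $S_1$ to the intersection and back along $G$) coincide in $\pi_1(X)$. For each such pair, choose arcs $\alpha \subset S_1$ and $\beta \subset G$ from $p_i^+$ to $p_i^-$; then the Whitney circle $\gamma = \alpha \cup \beta$ is null-homotopic in $X$ and hence bounds a generically immersed Whitney disk $W \colon D^2 \looparrowright X$.

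To promote $W$ into a concordance-valued Whitney cancellation, push each interior self-intersection of $W$ slightly into the $I$-direction to obtain a smoothly embedded disk $\widetilde W \subset X \times I$ with $\partial \widetilde W = \gamma \subset X \times \{0\}$. A framed tubular neighborhood of $\widetilde W$ provides room to modify the product concordance $S_1 \times I$ by a ``Whitney move across time'': remove a thin strip of $S_1 \times \{0\}$ running along $\alpha$ and reattach along a parallel pushoff of $\beta$ via a band that follows $\widetilde W$, using the trivial normal bundle of $G$ to keep the band disjoint from $G \times I$. The result is a smoothly embedded annulus $S^2 \times I \subset X \times I$---a concordance from $S_1$ to a new sphere $S_1'$ with $|S_1' \cap G| = 2k - 1$. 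Iterating $k$ times yields $S_1^{\star}$.

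The principal technical obstacle will be to verify that this Whitney-move-across-time really produces an \emph{embedded} concordance. One must show (a) that the $I$-direction pushoff genuinely resolves all self-intersections of $W$, (b) that the boundary Whitney framing extends over $\widetilde W$---the obstruction lies in $\pi_1(SO(3)) \cong \Z/2$ and is correctable by a boundary twist in the $I$-direction using the trivial normal bundle of $G$---and (c) that the band exchange introduces no new intersections with $G \times I$. The no-$2$-torsion hypothesis on $\pi_1(X)$ is used \emph{only} in the final appeal to Theorem \ref{thm:lightbulboriginal}; the concordance-level intersection cancellation above works regardless of $2$-torsion, since it depends only on the equivariant intersection number.
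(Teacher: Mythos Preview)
The paper does not prove Theorem~\ref{thm:maggie} directly; it is quoted from \cite{Maggie} as motivation, and the paper instead proves the more general Theorem~\ref{maintheorem} via covering spaces and Sunukjian's ambient Dehn surgery (with $\fq=0$ automatic when $\pi_1(X)$ has no $2$-torsion).  Your overall strategy---reduce the geometric intersection $|S_1\cap G|$ to~$1$ by a concordance and then invoke Gabai's Theorem~\ref{thm:lightbulboriginal}---is exactly the route taken in \cite{Maggie}, so at the level of architecture you are aligned with the original source rather than with this paper's reproof.

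There is, however, a genuine gap in your construction of the concordance from $S_1$ to $S_1'$.  Pushing the self-intersections of $W$ into the $I$-direction does produce an embedded disk $\widetilde W\subset X\times I$, but this by itself does not give an embedded $S^2\times I$.  Two issues remain.  First, you never address interior intersections $\mathring W\cap S_1$: pushing $\mathring W$ off $X\times\{0\}$ removes intersections with $S_1\times\{0\}$, but your ``band that follows $\widetilde W$'' is a $3$-dimensional object that must sit inside the product $S_1\times I$ you are modifying, and wherever $\mathring W$ met $S_1$ in $X$ the thickened $\widetilde W$ will meet $S_1\times I$ in $X\times I$ (both fill the $I$-direction), producing self-intersections of your candidate concordance.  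Second, and more fundamentally, the Whitney move along $\widetilde W$ is an isotopy of the $2$-sphere $S_1\times\{0\}$ \emph{inside the $5$-manifold} $X\times I$; it is not level-preserving with respect to the $I$-coordinate, so its track is naturally an embedding $S^2\times I\hookrightarrow (X\times I)\times I$, not into $X\times I$.  Converting an ambient $5$-dimensional isotopy of a sphere into a concordance in $X\times I$ with prescribed boundary in $X\times\{0,1\}$ requires an additional argument that you have not supplied.  In \cite{Maggie} this step is handled carefully (and is where most of the work lies); in the present paper it is subsumed into the ambient Dehn surgery machinery of \S\ref{sec:2spheres}.
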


Note that Theorem \ref{thm:maggie} requires that one of the 2-spheres ($S_0$) admit a dual sphere $G$. This assumption is essential; in general we do not expect arbitrary homotopic 2-spheres in a 4-manifold (even without 2-torsion in $\pi_1$) to be concordant. In Theorem \ref{maintheorem}, we weaken the condition that $S_0$ have a dual sphere to the meridian of $S_0$ being nullhomotopic in $X-S_0$. This weaker condition requires the presence of an immersed dual for $S_0$ rather than an embedded dual.


The conclusion of Theorem \ref{thm:maggie} (and Theorem \ref{maintheorem}) cannot be isotopy, rather than concordance. Sato \cite{Sato} constructs a 2-sphere $K$ in $S^2\times S^2$ which is homotopic to $S^2\times\pt$ but not isotopic to $S^2\times\pt$. Theorem \ref{thm:maggie} (and indeed \cite[Theorem 6.1]{NS}) implies that $K$ is concordant to $S^2\times\pt$. See Example \ref{satoexample}.

Again, Schwartz's \cite{Hannah} examples show that the 2-torsion assumption in Theorem \ref{thm:maggie} is necessary, as her counterexamples to the 4D light bulb theorem in the presence of 2-torsion are also counterexamples to an analog of Theorem \ref{thm:maggie} in the presence of 2-torsion. See Example \ref{hannahexample}. 

The 4D lightbulb theorem was reproved and generalized by Schneiderman and Teichner in \cite{ST}.  
When $\pi_1(X)$ has 2-torsion, Schneiderman and Teichner show that it is necessary and sufficient for the  
{\emph{Freedman-Quinn invariant}} $\fq(S_0,S_1)$ of the two spheres $S_0,S_1$ to vanish in order for there to exist an isotopy. We define this invariant in \S\ref{sec:fq}, but for now we discuss the full statement of the light bulb theorem.


\begin{theorem}[general 4D light bulb theorem, \cite{Dave} \cite{ST}]\label{thm:lightbulb}
Let $S_0,S_1$, and $G$ be 2-spheres smoothly embedded in a smooth 4-manifold $X^4$. Assume that $S_0$ and $S_1$ are homotopic, $G$ has trivial normal bundle, and $S_0$ and $S_1$ both transversally intersect $G$ in exactly one point. Then $S_0$ and $S_1$ are ambiently smoothly isotopic if and only if $\fq(S_0,S_1)=0.$
\end{theorem}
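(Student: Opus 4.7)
The plan is to follow the strategy of Gabai \cite{Dave}, supplemented by the 2-torsion analysis of Schneiderman-Teichner \cite{ST}. The necessity direction is the easy one: the Freedman-Quinn invariant is designed to be an invariant of the ordered pair $(S_0, S_1)$ under ambient isotopy, so $\fq(S_0, S_1) = 0$ whenever $S_0$ and $S_1$ are isotopic.

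For sufficiency, I would start with a regular homotopy $H \colon S^2 \times I \to X$ from $S_0$ to $S_1$, which exists by the homotopy hypothesis. After a generic perturbation, the track of $H$ in $X \times I$ is an immersed 3-manifold with finitely many transverse double points, each corresponding to a finger/Whitney move pair in the accompanying family of immersed 2-spheres. To each double point $p$ one associates a double-point loop $\gamma_p \in \pi_1(X)$ obtained by joining the two sheets through arcs on $S^2 \times I$ and closing up along $S_0$. The dual sphere $G$ enables the Norman trick: any self-intersection of an immersed sphere that meets $G$ once can be removed at the expense of tubing into a parallel copy of $G$. Using this together with standard Whitney disk arguments, I would attempt to pair up the double points of $H$ that carry mutually inverse loops in $\pi_1(X)$ and cancel each pair by a Whitney move, while using parallel copies of $G$ to push any secondary intersections with $S_0 \cup S_1$ out of the way.

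The obstruction to carrying out this pairing is exactly the Freedman-Quinn invariant. When two double points are labelled by loops $\gamma, \gamma^{-1}$ with $\gamma^2 = 1$, the Whitney disk producing their cancellation can get identified with its own ``inversion'' under $\gamma \mapsto \gamma^{-1}$, so that what looks like a canceling pair is in fact a single self-paired double point whose contribution to $\fq$ is nontrivial. Thus $\fq$ records the parity of unpairable double points indexed by nontrivial 2-torsion elements of $\pi_1(X)$ modulo inversion. When $\fq(S_0, S_1) = 0$, every double point participates in a genuine pairing, one cancels them all by Whitney moves, and the resulting embedded track yields an ambient isotopy (upgrading concordance to isotopy is automatic here because the dual sphere $G$ is present throughout, following Gabai's argument).

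The main obstacle I expect is executing the Whitney cancellations cleanly: the Whitney disks must be framed, embedded, and disjoint from $S_0 \cup S_1$ and from each other. Achieving this requires repeated tubing into parallel copies of $G$, careful exploitation of the triviality of the normal bundle of $G$ to control the Whitney framing obstruction, and a fairly intricate bookkeeping of the resulting secondary intersections. This is where the bulk of the technical work in \cite{Dave} and \cite{ST} lives, and where the hypothesis that $G$ be \emph{embedded} (rather than only immersed, as in Theorem \ref{maintheorem}) is used most heavily.
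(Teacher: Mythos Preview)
The paper does not actually prove Theorem~\ref{thm:lightbulb}; it is stated as background and attributed to \cite{Dave} and \cite{ST}, with the comment that Gabai's original statement is equivalent to the constructive direction and that Schneiderman--Teichner supplied the obstruction-theoretic converse. So there is no ``paper's own proof'' to compare against; I will instead compare your sketch to the actual arguments in those references.

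Your outline is broadly in the spirit of the Schneiderman--Teichner approach, but two points are confused. First, the dimensional picture: the track of a generic homotopy $S^2\times I\to X\times I$ is a $3$-manifold in a $5$-manifold, so its self-intersection is a $1$-manifold (circles), not isolated double points. What you actually want are the self-intersection \emph{points} of an intermediate \emph{immersed $2$-sphere} $S'$ in $X^4$, obtained by performing all the finger moves of a factored regular homotopy $S_0\leadsto S'\leadsto S_1$. It is to these points that one attaches group elements $g_p\in\pi_1(X)$, and $\fq$ is exactly the obstruction to pairing them by framed embedded Whitney disks with interiors disjoint from $S'$.

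Second, the proof does not ``upgrade concordance to isotopy.'' One works entirely in four dimensions. The key step (Gabai's, reinterpreted by Schneiderman--Teichner) is a lemma to the effect that, in the presence of the embedded framed dual $G$, the isotopy class of the sphere obtained from $S'$ by Whitney moves depends only on the combinatorics of the Whitney-disk pairing, and this combinatorics is exactly what $\fq$ records. When $\fq(S_0,S_1)=0$ one can choose a pairing so that the Whitney moves on $S'$ yield a sphere isotopic to $S_1$; since they also yield $S_0$ (by undoing the finger moves), one gets $S_0$ isotopic to $S_1$. The repeated tubing into parallel copies of $G$ that you mention is indeed how one arranges the Whitney disks to be framed, embedded, and disjoint from $S'$, and this is where embeddedness of $G$ (as opposed to the merely immersed dual in Theorem~\ref{maintheorem}) is used.
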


Gabai did not state Theorem \ref{thm:lightbulb} in terms of $\fq$, but his full theorem statement is equivalent to the constructive direction of Theorem \ref{thm:lightbulb}. Schneiderman and Teichner then noted that the invariant $\fq$ obstructs isotopy in the remaining cases. 

The impetus of this current paper was a pair of subtle errors in an argument in \cite{NS} involving 2-torsion and framings (see \S\ref{sec:2spheres}). Theorem \ref{maintheorem} corrects this error by adding the assumption that the Freedman-Quinn invariant $\fq(S_0,S_1)$ vanishes and that the dual sphere $G$ is framed.

\section{The Freedman-Quinn Invariant}\label{sec:fq}
We first review the definition of the Freedman-Quinn invariant $\fq$ as in \cite{ST}. We remind the reader that this invariant first appeared in work of Freedman-Quinn \cite{fq}, and indeed all of the results in this section are contained in \cite{fq} (albeit phrased slightly differently). However, we will use the conventions of \cite{ST}, and use them to give a new definition of $\fq$ via intersections of lifts in a universal cover. After reading the definition of the codomain of $\fq$, the reader uninterested in the details can simply take Proposition \ref{fqdef4} as the definition of $\fq$ (and Proposition \ref{mudef4} as the corresponding definition of the associated invariant $\mu_3$), and skip to Section 5.  To arrive at this interpretation, we start with the 4-dimensional and 6-dimensional definitions of $\fq$ as in \cite{ST}.  We include the 4-dimensional definition because we find it to be a natural starting point.  The 6-dimensional definition is then interpreted in terms of intersections in the universal covering space. 


To define the codomain of $\fq$, one needs to define a certain homomorphism $\mu_3 : \pi_3(X) \to \mathbb{F}_2T_X$. Each method of computing $\fq$ is accompanied by a corresponding method of computing $\mu_3$.  We give the proofs of the different interpretations for both $\mu_3$ and $\fq$; however, the proofs are rather redundant and the reader might want to just look at the proofs for one of the two.   

\subsection{Schneiderman-Teichner's interpretations}

Let $X$ be a smooth, oriented, based 4-manifold. 
We write \[T_X := \{ g \in \pi_1(X) : g^2 = 1, g \neq 1 \}.\]
We call this the {\emph{2-torsion subset}} of $\pi_1(X)$. In this section, we will describe Schneiderman and Teichner's \cite{ST} definition/method for computing the Freedman-Quinn invariant: the original definition is 6-dimensional, while the second (equivalent) definition is 5-dimensional. The 5-dimensional definition is most similar to the techniques that will be used to prove Theorem \ref{maintheorem}, but the 6-dimensional definition is easier to state and prove to be well-defined.

\subsubsection{A 6-dimensional definition of $\fq$}\label{6dsec}

We write $\mathbb{F}_2 T_X$ to denote the $\mathbb{F}_2$-vector space with basis $T_X$.  We have an embedding $\mathbb{F}_2 T_X \hookrightarrow \mathbb{Z}\pi_1(X) / \langle g + g^{-1}, 1\rangle$ given by $g \mapsto g$.  We now describe a homomorphism $\mu_3 : \pi_3(X) \to \mathbb{F}_2 T_X$. Choose a basepoint for $S^3$.  Given a based map $f : S^3 \to X$, cross the codomain with $\mathbb{R}^2$ to obtain a new map (we abuse notation) $f : S^3 \to X \times \mathbb{R}^2$, and then perturb $f$ so that it is generic and hence an embedding away from double points of intersection.

\begin{definition}[{$\mu_3$, \cite[Section 4]{ST}}]\label{mudef1}
For each double point $p$ in the image of $f$, choose an arc between the preimages $f^{-1}(p)$. This arc maps to a closed loop $\lambda_p$ in $f(S^3)$ based at $p$. Conjugate this loop by an arc contained in $f(S^3)$ from the basepoint to $p$ to obtain a based loop $\gamma_p$. The convention that the arc be contained in $f(S^3)$ ensures that the choice of the arc does not affect the resulting element of $\pi_1$.  We write $g_p:=[\gamma_p]\in\pi_1(X)$. See Figure \ref{fig:gp}. Thus, to every self-intersection of $f$, we associate an element of $\pi_1(X)$. Schneiderman--Teichner \cite{ST} show that this element is contained in $\{1\}\cup T_X$ (\ie they show $g_p^2=1$).



 \begin{figure}
 	  \labellist
\small\hair 2pt
 \pinlabel $\ast$ at 55 130
  \pinlabel $p$ at 120 125
 \pinlabel $\textcolor{blue}{\gamma_p}$ at 275 220
  \pinlabel $\textcolor{red}{f(S^3)}$ at 50 210
\endlabellist
 \includegraphics[width=50mm]{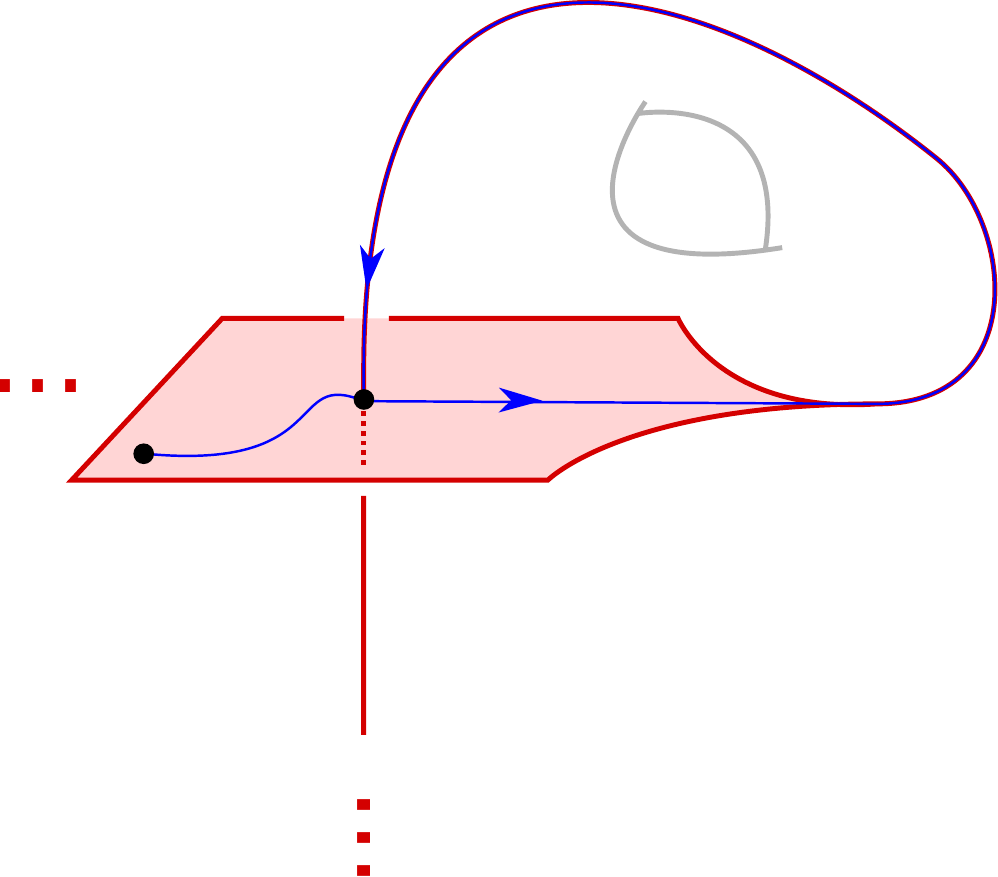}
	\caption{The image of a map $f:S^3\to X^4\times \R^2$ with isolated self-transverse self-intersections, one of which is point $p$. To associate an element of $\pi_1(X)$ to $p$, we choose an arc in $S^3$ between the two preimages of $p$. This arc maps to a loop in $f(S^3)$ through $p$. We then conjugate this loop by an arc contained in the image of $f$ between $p$ and the basepoint to obtain a based loop $\gamma_p$. We write $g_p:=[\gamma_p]\in\pi_1(X)$.}\label{fig:gp}
\end{figure}


We write
  \[\mu_3(f):= \sum_{p\in f\cap f,\hspace{.05in} g_p\neq 1}  g_p \in \F_2T_X.\label{eq:mu3}\]
  
  As in \cite{ST}, one can verify that $\mu_3:\pi_3(X)\to\F_2 T_X$ is a homomorphism. 
\end{definition}

\begin{definition}[{$\fq$, \cite[Section 4]{ST}}]\label{fqdef1}

Let $S_0,S_1: S^2 \hookrightarrow X$ be two based 2-spheres smoothly embedded in a compact, oriented, smooth based 4-manifold $X^4$ and assume that $S_0$ and $S_1$ are based-homotopic. Let $H:(S^2,\ast)\times I\to (X, \ast)\times I$ be an immersion with $H(S^2\times 0)=S_0\times 0$ and $H(S^2\times 1)=-S_1\times 1$ (\eg the track of a based homotopy from $S_0$ to $S_1$). By taking the product of the codomain with $\mathbb{R}$, we have a map from a 3-manifold to a 6-manifold, $M:=H(S^2\times I)\subset X\times\{0\}\times I \subset X \times\R\times I$. By perturbing this map slightly, we obtain an immersion with isolated double points.  As in the previous definition, for each point $p\subset M$ of self-intersection, we obtain a signed element $g_p$ of $T_X \cup \{1\}$. We define
\[\fq(S_0,S_1):=\sum_{p\in M\cap M, g_p \neq 1}g_p\in\F_2 T_X/\mu_3(\pi_3(X)).\]
\end{definition}

The invariant $\fq$ is associated to the pair of spheres $(S_0,S_1)$, and is independent of the map $H$.  Given an element $f$ of $\pi_3(X)$, we could surger $H$ along $f$ to obtain a new based map $H':S^2 \times I\to X\times I$ 
where $\fq(H')$ and $\fq(H)$ differ by $\mu_3(f)$. We quotient the codomain of $\fq$ by $\mu_3(\pi_3(X))$ to ensure that $\fq$ is well-defined. 

\subsubsection{A 5-dimensional construction of $\fq$}\label{5dsec}
Now we discuss Schneiderman and Teichner's \cite{ST} 5-dimensional method of computing $\mu_3$ and $\fq$.  Our Proposition \ref{fqdef2} is the content of Section 4.D of \cite{ST}.  The following proposition is proved in the same way.  

\begin{proposition}[{\cite{ST}}]\label{mudef2}
Fix a smooth based map $f:S^3\to X\times I$. Perturb $f$ to have transverse self-intersections. The self-intersections of $f$ are circles $\gamma$ which are double-covered by $f^{-1}(\gamma)$. These double covers can either be connected or disconnected; see Figure \ref{fig:kindsofcover}. Let $C_f$ denote the set of circles of self-intersection of $f$ which have connected preimages under $f$. Given $\gamma\in C_f$, we may choose an arc in $f(S^3)$ from the basepoint to $\gamma$ to obtain an element $[\gamma] \in \pi_1(X)$. Since $f^{-1}(\gamma)$ is nullhomotopic in $S^3$, we must have $[\gamma] \in T_X$.

Then we have \[\mu_3(f)=\sum_{\gamma \in C_f, [\gamma] \neq 1}[\gamma]\in \F_2 T_X.\]
\end{proposition}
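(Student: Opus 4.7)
The plan is to derive this 5-dimensional computation from Definition \ref{mudef1} by crossing the target $X \times I$ with an extra $\mathbb{R}$ factor and perturbing. Given a generic $f : S^3 \to X \times I$ whose self-intersection set is a disjoint union of circles, let $\tilde f : S^3 \to X \times I \times \mathbb{R}$ be the map $x \mapsto (f(x), \varepsilon(x))$ for a generic smooth $\varepsilon : S^3 \to \mathbb{R}$ of small amplitude. After shrinking $\varepsilon$, the double points of $\tilde f$ are concentrated in arbitrarily small neighborhoods of the self-intersection circles of $f$, so Definition \ref{mudef1} yields $\mu_3(f) = \sum_p g_p$, where the sum runs over the double points $p$ of $\tilde f$ with $g_p \neq 1$. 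The goal is then to group these double points by the circle $\gamma$ of self-intersection of $f$ they sit near, and to count their mod-$2$ contribution.

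Fix a self-intersection circle $\gamma$ of $f$. The preimage $f^{-1}(\gamma) \subset S^3$ is a double cover of $\gamma$ and is either connected or has two components. If $f^{-1}(\gamma) = \alpha_1 \sqcup \alpha_2$ is disconnected, each $\alpha_i$ maps diffeomorphically onto $\gamma$ and is nullhomotopic in $S^3$, so $\gamma$ is nullhomotopic in $X$ and $[\gamma] = 1$. Moreover, the double points of $\tilde f$ near $\gamma$ correspond to zeros of $\varepsilon \circ \alpha_1 - \varepsilon \circ \alpha_2$ on $\gamma \cong S^1$, and a generic function $S^1 \to \mathbb{R}$ has an even number of zeros, so circles with disconnected preimage contribute $0$ to $\mu_3(f)$ modulo $2$.

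If instead $f^{-1}(\gamma)$ is a single circle $\tilde\gamma$ double-covering $\gamma$ with deck involution $\sigma$, then because $\tilde\gamma$ is nullhomotopic in $S^3$ the loop $f \circ \tilde\gamma$, which represents $[\gamma]^2$ in $\pi_1(X)$, is nullhomotopic, so $[\gamma]^2 = 1$ and $[\gamma] \in T_X \cup \{1\}$. The double points of $\tilde f$ near $\tilde\gamma$ correspond to unordered pairs $\{x, \sigma(x)\}$ with $h(x) := \varepsilon(x) - \varepsilon(\sigma(x)) = 0$. Since $h \circ \sigma = -h$, parameterizing $\tilde\gamma \cong \mathbb{R}/\mathbb{Z}$ so that $\sigma$ acts as $x \mapsto x + \tfrac{1}{2}$, the function $h$ takes opposite signs at $0$ and $\tfrac{1}{2}$ and hence has an odd number of zeros on $[0,\tfrac{1}{2}]$ by the intermediate value theorem; by $\sigma$-antisymmetry the zero set on $[\tfrac{1}{2}, 1]$ is its $\sigma$-image, so the number of unordered $\sigma$-orbits of zeros, and therefore of double points of $\tilde f$ associated with $\tilde\gamma$, is odd. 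Each such double point is assigned the class $[\gamma]$ by Definition \ref{mudef1}, so $\tilde\gamma$ contributes a single copy of $[\gamma]$ to $\mu_3(f)$ modulo $2$.

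Summing the circle-by-circle contributions recovers $\mu_3(f) = \sum_{\gamma \in C_f,\, [\gamma] \neq 1} [\gamma]$ in $\F_2 T_X$. The main technical obstacle is the parity count in the connected case, where the antisymmetry $h \circ \sigma = -h$ is essential; I also would verify, as a routine check, that the group element assigned by Definition \ref{mudef1} to each double point near $\tilde\gamma$ really is $[\gamma]$, which follows because the arc in $S^3$ from $x$ to $\sigma(x)$ maps to a loop that traverses $\gamma$ once, and conjugating by an arc along $\tilde f(S^3)$ back to the basepoint produces a based representative of $[\gamma]$.
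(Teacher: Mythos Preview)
Your argument is correct. The paper itself does not supply a proof of this proposition; it simply attributes the result to Section~4.D of \cite{ST} and remarks that it is proved in the same way as Proposition~\ref{fqdef2}. Your approach---crossing the target with an extra $\mathbb{R}$ factor, perturbing by a generic height function $\varepsilon$, and then counting the resulting isolated double points circle-by-circle via the antisymmetric function $h = \varepsilon - \varepsilon\circ\sigma$---is exactly the natural way to pass between the 5- and 6-dimensional pictures, and is the argument one would expect to find in \cite{ST}. The parity computation in the connected-preimage case (odd number of zeros on a fundamental domain forced by $h\circ\sigma = -h$) is the key point, and you have it right; the disconnected-preimage case is handled twice over, since both the group element is trivial and the zero count is even.
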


 \begin{figure}
 	  \labellist
\small\hair 2pt
 \pinlabel $\gamma$ at 180 85
  \pinlabel $\ast$ at 105 20
 \pinlabel $\gamma$ at 455 90
  \pinlabel $\ast$ at 380 15
\endlabellist
 \includegraphics[width=90mm]{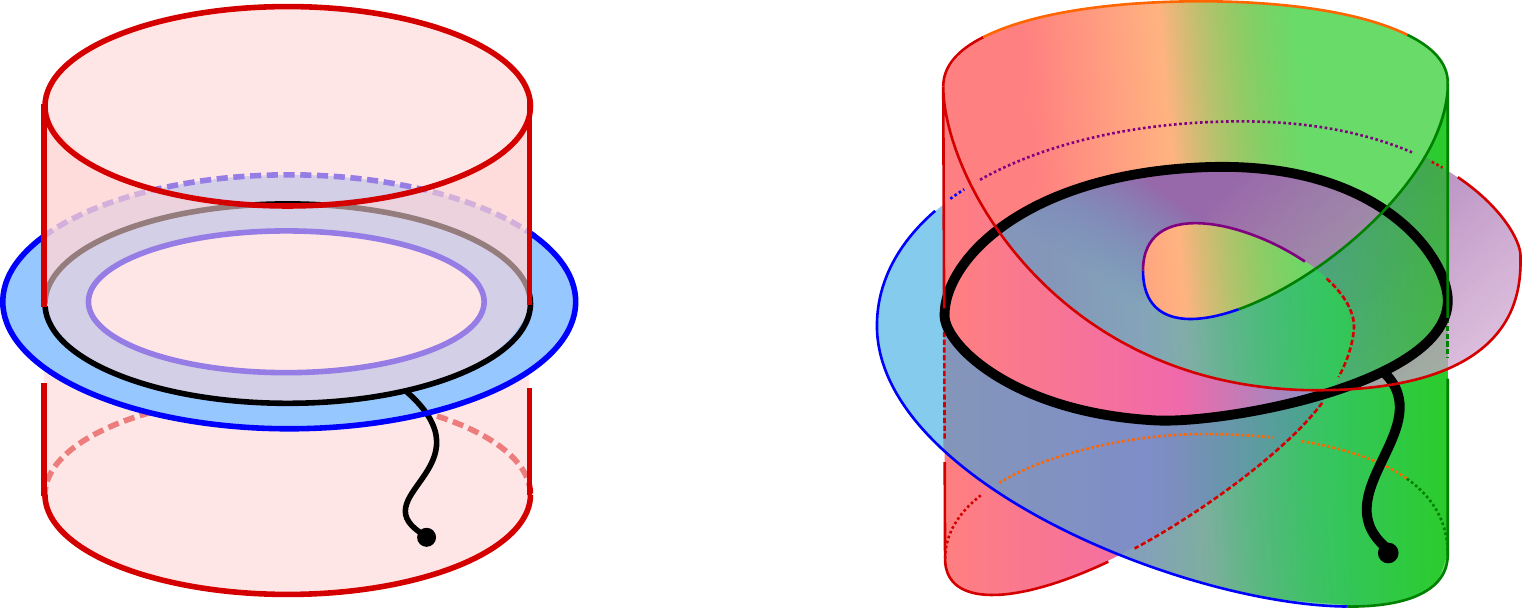}
	\caption{A circle $\gamma$ of self-intersection of an immersed $S^3$ or $S^2\times I$ in $X\times I$. Since the 3-manifold is simply-connected, $\gamma$ represents an element $g$ of $\pi_1(X)$. {\bf{Left:}} $\gamma$ has disconnected preimage under the immersion. We conclude $g=1$ and $\gamma\not\in C_f$. {\bf{Right:}} $\gamma$ has connected preimage under the immersion. We conclude $g^2=1$ and $\gamma\in C_f$. }\label{fig:kindsofcover}
\end{figure}

Note that the orientation of the circles $\gamma$ in the above does not affect the resulting element of $\pi_1$.  Additionally, there is a similar alternative way of computing $\fq$. 

\begin{proposition}
[{\cite{ST}}]\label{fqdef2}
Let $S_0,S_1: S^2 \to X$ be based 2-spheres smoothly embedded in a compact, oriented, smooth, based 4-manifold $X^4$. Let $H:(S^2,\ast)\times I\to (X,\ast)\times I$ be an immersion with self-transverse self-intersections and $H(S^2\times 0)=S_0\times 0$ and $H(S^2\times 1)=-S_1\times 1$ (\eg the track of a based homotopy from $S_0$ to $S_1$). Let $C_H$ denote the set circles of self-intersection of $H$ which have connected preimage under $H$. Given $\gamma\in C_H$, we may choose an arc in $f(S^3)$ from the basepoint to $\gamma$ to obtain an element $[\gamma] \in \pi_1(X)$. Since $f^{-1}(\gamma)$ is nullhomotopic in $S^2 \times I$, we must have $[\gamma] \in T_X$. Then
\[\fq(S_0,S_1)=\sum_{\gamma \in C_H, [\gamma] \neq 1}[\gamma]\in\F_2 T_X/\mu_3(\pi_3(X))\]
\end{proposition}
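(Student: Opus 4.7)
The plan is to reduce Proposition \ref{fqdef2} to Definition \ref{fqdef1} by analyzing what happens when we cross the codomain with $\mathbb{R}$ and perturb. Starting from a self-transverse immersion $H:(S^2,\ast)\times I \to (X,\ast)\times I$ whose double-point set is a disjoint union of circles $\{\gamma\}$, I would compose with the inclusion $X\times I \hookrightarrow X\times\mathbb{R}\times I$ as $(x,t)\mapsto(x,0,t)$ to obtain exactly the 6-dimensional set-up of Section \ref{6dsec}, with the original circles still present as the (non-generic) double-point set. A small generic perturbation in the $\mathbb{R}$-coordinate will split each circle $\gamma$ into a finite collection of isolated transverse double points, and the claim reduces to computing this local contribution, circle by circle, and comparing with Definition \ref{fqdef1}.

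The local analysis splits into two cases according to whether $H^{-1}(\gamma)$ is connected. If $H^{-1}(\gamma)=\sigma_1\sqcup\sigma_2$ is disconnected, I would choose a smooth bump function $\phi$ on $S^2\times I$ that equals $+\epsilon$ near $\sigma_1$ and $-\epsilon$ near $\sigma_2$ and perturb $H$ in the $\mathbb{R}$-direction by $\phi$; the two sheets are then pushed to disjoint heights near $\gamma$, so $\gamma$ contributes no isolated double points. If instead $H^{-1}(\gamma)=\sigma$ is a single circle double-covering $\gamma$ (i.e.\ $\gamma\in C_H$), then any $\mathbb{R}$-valued perturbation $\phi$ on $\sigma$ produces isolated double points exactly at the points of $\gamma$ where the \emph{anti-symmetric} function $h(x):=\phi(x_1)-\phi(x_2)$ vanishes, $\{x_1,x_2\}=H^{-1}(x)$. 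The assignment $x\mapsto h(x)$ is naturally a section of the non-trivial real line bundle over $\gamma$ associated to the double cover $\sigma\to\gamma$, i.e.\ a section of the Möbius bundle, so a generic $\phi$ gives an odd number of transverse zeros.

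Each such isolated double point $p$ satisfies $g_p=[\gamma]$ (after the conjugation into a based loop using a chosen arc in the image), because the small whisker between the two preimages of $p$ is a perturbation of a whisker joining $x_1$ and $x_2$ in $\sigma$, which closes up in $X$ to a loop freely homotopic to $\gamma$. Thus in $\mathbb{F}_2T_X$ the contribution of $\gamma$ is exactly $[\gamma]$ when $\gamma\in C_H$ and $[\gamma]\neq 1$, and $0$ otherwise. I would also separately verify that $[\gamma]\in T_X\cup\{1\}$ for $\gamma\in C_H$ as claimed: $\sigma$ is a loop in the simply-connected 3-manifold $S^2\times I$, hence nullhomotopic, yet its image in $X$ represents $[\gamma]^2$ since $\sigma\to\gamma$ has degree $2$, forcing $[\gamma]^2=1$.

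Summing over all double-point circles of $H$ and comparing with Definition \ref{fqdef1} gives the claimed identity in $\mathbb{F}_2T_X/\mu_3(\pi_3(X))$. The main obstacle is really just making the Möbius-bundle picture precise and checking that the perturbations in the two cases can be done simultaneously on disjoint neighborhoods of all the double circles; everything else is transversality bookkeeping entirely parallel to the proof of Proposition \ref{mudef2} (which, as the paper notes, is done in exactly the same way).
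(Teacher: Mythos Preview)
Your proposal is correct and is essentially the standard argument relating the 5-dimensional and 6-dimensional formulations. Note that the paper itself does not supply a proof of Proposition~\ref{fqdef2}: it simply cites \cite[Section~4.D]{ST} and remarks that Proposition~\ref{mudef2} (the analogous statement for $\mu_3$) is proved in the same way. Your M\"obius-bundle analysis of the perturbation near a circle with connected preimage is exactly the mechanism underlying that citation, so there is nothing to compare---you have filled in what the paper outsources.
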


\subsection{Computing $\fq$ using intersections of lifts in covering spaces}\label{sec:covers}

For the purposes of this paper, it will be useful to give new methods of computing $\mu_3$ and $\fq$ in terms of covering maps. We will again give 5-dimensional and 6-dimensional variations on the definition of both $\mu_3$ and $\fq$.

\subsubsection{Computing $\fq$ using covering spaces: 6-dimensional version}

Let $X^4$ be a smooth based 4-manifold and choose a basepoint for the universal cover $\widetilde{X}$ above the basepoint of $X$.

\begin{proposition}\label{mudef3}
Fix a based map $f:S^3\to X$ and cross the codomain with $\R^2$  to obtain a new map $f:S^3\to X\times\R^2$ and perturb $f$ to be generic and hence an embedding away from double points of intersection.

Let $Y$ indicate the union of images of all lifts of $f$ to $\widetilde{X}\times\R^2$, and let $Y_1$ be the image of the lift of $f$ based at the basepoint for $\widetilde{X}$. Perturb $Y$ equivariantly so that $Y_1$ intersects other components of $Y$ transversally in isolated points.

Now $\pi_1(X)$ acts on $\widetilde{X}$, and in particular on $Y$ by permuting the lifts of $f$. Let $Y_g= g \cdot Y_1$.  
Then

\[\mu_3(f)=\sum_{g\in T_X}\frac{1}{2}|Y_1\cap Y_g|\cdot g\in\F_2 T_X.\]
\end{proposition}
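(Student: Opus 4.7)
The plan is to translate self-intersections of $f$ downstairs in $X \times \R^2$ directly into intersection points of $Y_1$ with translated lifts $Y_g$, by tracking how the deck transformation action distinguishes the two local sheets of $f$ at a double point. The upstairs picture will turn out to be two-to-one over the downstairs picture, which accounts for the factor of $\tfrac{1}{2}$ in the formula.

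First I would fix a transverse double point $p$ of $f$ with preimages $a,b \in S^3$ and observe that $\widetilde{f}_1(a)$ and $\widetilde{f}_1(b)$ are distinct lifts of $p$ lying in $Y_1$ precisely when $g_p \neq 1$. The main bookkeeping step is to lift the loop $\gamma_p = \alpha \cdot \lambda_p \cdot \alpha^{-1}$ at the basepoint $\tilde{*}$ stage by stage: the sub-arc $\alpha \subset f(S^3)$ lifts inside $Y_1$ to terminate at $\widetilde{f}_1(a)$; the loop $\lambda_p$ lifts along $\widetilde{f}_1 \circ \eta$ to $\widetilde{f}_1(b)$; and the concluding lift of $\alpha^{-1}$ from $\widetilde{f}_1(b)$ shows that the deck transformation sending $\widetilde{f}_1(a)$ to $\widetilde{f}_1(b)$ is exactly $g_p$. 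Therefore at $\widetilde{f}_1(a)$ the other local sheet of $\bigcup_g Y_g$ belongs to $Y_{g_p^{-1}} = Y_{g_p}$ (using $g_p^2 = 1$), giving a point of $Y_1 \cap Y_{g_p}$; the symmetric analysis at $\widetilde{f}_1(b)$ yields a second such point.

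To close the argument I would establish the converse: any intersection point of $Y_1 \cap Y_g$ with $g \in T_X$ projects to a transverse self-intersection of $f$ with $g_p = g$, since an equality of the form $\widetilde{f}_1(s) = g \cdot \widetilde{f}_1(s')$ forces $s \neq s'$ by freeness of the deck action, and the previous step then identifies $g_p$ with $g$. Combined with the two-to-one count, this gives $|Y_1 \cap Y_g| = 2 \cdot \#\{p : g_p = g\}$, and reducing modulo $2$ recovers $\mu_3(f)$ as defined in Definition \ref{mudef1}. The main obstacle I anticipate is purely the bookkeeping in the deck-transformation step: one must check that lifting $\alpha \cdot \lambda_p \cdot \alpha^{-1}$ produces exactly $g_p$ rather than a conjugate, and verify that the final count is independent of the auxiliary choices of arcs and of the equivariant perturbation made upstairs in $\widetilde{X} \times \R^2$.
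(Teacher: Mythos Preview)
Your proposal is correct and follows essentially the same approach as the paper: both arguments set up a two-to-one correspondence between points of $Y_1\cap Y_g$ upstairs and self-intersections $p$ of $f$ downstairs with $g_p=g$, via the standard covering-space bookkeeping that identifies the deck transformation carrying one lifted sheet to the other with the loop $\gamma_p$. The only cosmetic difference is the direction of exposition---the paper starts from a point $p\in Y_1\cap Y_g$ and projects down (building the based loop from an arc $A\subset Y_1$ and an arc $B\subset Y_g$), while you start from a double point downstairs and lift $\gamma_p$ piece by piece---but the content is the same.
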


\begin{proof}
Let $\pi:\widetilde{X}\to X$ be the covering projection. Fix a point $p\in Y_1\cap Y_g$. Let $q\in Y_1\cap Y_g$ be the $g$-translate of $p$, where $g\in T_X$.

Let $A$ be an arc in $Y_1$ from the basepoint of $Y_1$ to $p$, and $B$ an arc in $Y_g$ from $p$ to $q$. 
Then $\gamma_p:=\pi(\overline{A})\pi(B)\pi(A)$ is a based loop in $X$ with $[\gamma_p]=g_{\pi(p)}$, as in Definition \ref{mudef1}.

We note that if $s$ is a self-intersection of $\pi(Y)$ with $g_s\neq1$, then $s$ lifts to a point of self-intersection of $Y_1$ and $Y_{g_s}$. Since $g_s$ acts on $Y_1\cap Y_{g_s}$ by permuting pairs of points, there are exactly two lifts of $s$ in $Y_1\cap Y_{g_s}$.

We therefore conclude that for every two points in $Y_1\cap Y_g$, we find one self-intersection $s$ of $\pi(Y)$ with $g_s=g$. Thus, \[\mu_3(f)=\sum_{q\in \pi(Y)\cap\pi(Y),g_q\neq 1}g_q=\sum_{g\in T_X}\frac{1}{2}|Y_1\cap Y_g| \cdot g.\]

\end{proof}

The following result is proved similarly: 
we compare the proposed formula for $\fq$ to that in Proposition \ref{fqdef1}.

\begin{proposition}\label{fqdef3}
Let $S_0, S_1 : S^2 \to X^4$  be based embeddings of spheres in $X^4$ that are based homotopic in $X$.  Let $Y$ be the image of an immersion $H: S^2 \times I \to \widetilde{X} \times \R \times I$ that has $H|_{S^2 \times 0} = S_0$ when we consider $S_0$ as a map into $X \times \{0\} \times \{0\}$ and similarly $H|_{S^2 \times 1} = -S_1$ when we consider $S_1$ as a map into $X \times \{0\} \times \{1\}$.

Given $g \in\pi_1(X)$, let $Y_g:=g \cdot Y_1$. Perturb the $Y_g$ equivariantly so that $Y_1$ and $Y_g$ have isolated transverse intersections for all $g$. Then 
\[\fq(S_0,S_1)=\sum_{g \in T_X}\frac{1}{2}|Y_1\cap Y_g|\cdot g\in\F_2 T_X/\mu_3(\pi_3(X)).\]
\end{proposition}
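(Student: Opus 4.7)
The plan is to adapt the proof of Proposition \ref{mudef3} essentially verbatim to the 3-manifold-in-6-manifold setting of the cylinder $H(S^2 \times I)$, matching the lifted intersection count against the self-intersection formula of Proposition \ref{fqdef1}.

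First, I would arrange $H$ to be a generic immersion whose only singularities are isolated self-transverse double points in the interior of $S^2 \times I$; then the downstairs image $M := \pi(Y_1) \subset X \times \R \times I$ likewise has only isolated transverse double points, and Proposition \ref{fqdef1} gives $\fq(S_0,S_1) = \sum_{p \in M \cap M,\, g_p \neq 1} g_p$ in $\F_2 T_X / \mu_3(\pi_3(X))$. An equivariant perturbation, obtained by perturbing $M$ downstairs and lifting, ensures that each translate $Y_g := g \cdot Y_1$ meets $Y_1$ transversally in isolated points, with no spurious intersections beyond those coming from double points of $M$.

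Second, I would establish the correspondence between double points of $M$ and pairs of intersection points of the lifts. Fix $p \in Y_1 \cap Y_g$ with $g \neq 1$, and set $q := g \cdot p$. Choose an arc $A \subset Y_1$ from the basepoint $\widetilde{\ast}$ to $p$ and an arc $B \subset Y_g$ from $p$ to $q$; these exist because $Y_g$ is path-connected (as the image of the connected space $S^2 \times I$ under an immersion). The projection $\pi(\overline{A} \cdot B \cdot A)$ is a based loop in $X$ that realizes precisely the construction of $g_{\pi(p)}$ in Definition \ref{fqdef1}: the image $\pi(A \cup B)$ is the loop in $M$ based at the double point $\pi(p)$ arising from an arc in $S^2 \times I$ joining the two preimages of $\pi(p)$, and $\pi(A)$ provides the conjugating arc from the basepoint. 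Hence $g = g_{\pi(p)}$, which in particular forces $g \in T_X$ by the Schneiderman--Teichner analysis in \cite{ST}.

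Finally, since $g$ acts as an involution swapping $p$ and $q$, every double point of $M$ with group element $g \neq 1$ contributes exactly two points to $Y_1 \cap Y_g$, and conversely every point of $Y_1 \cap Y_g$ comes from such a double point. Summing over $g$,
\[\sum_{g \in T_X} \tfrac{1}{2} |Y_1 \cap Y_g| \cdot g = \sum_{p \in M \cap M,\, g_p \neq 1} g_p = \fq(S_0,S_1) \in \F_2 T_X / \mu_3(\pi_3(X)).\]
The main obstacle is the bookkeeping for the equivariant perturbation -- arranging simultaneous transversality of $Y_1$ with every $Y_g$ without creating intersections that do not descend to self-intersections of $M$. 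Everything else is formal and parallels the proof of Proposition \ref{mudef3}.
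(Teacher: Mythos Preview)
Your proposal is correct and follows essentially the same approach as the paper, which does not give a separate proof but simply remarks that the argument is identical to that of Proposition~\ref{mudef3}, comparing the lifted intersection count to the formula in Definition~\ref{fqdef1}. Your worry about the equivariant perturbation is unnecessary: perturbing $M$ downstairs to be self-transverse and then lifting automatically makes all $Y_g$ pairwise transverse with every intersection descending to a double point of $M$, so there is no bookkeeping obstacle.
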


\subsubsection{Computing $\fq$ using covering spaces: 5-dimensional version}\label{sec:def4}

Finally, we give yet another pair of definitions of $\mu_3$ and $\fq$: 5-dimensional definitions involving coverings. These constructions will be essential in the proof of Theorem \ref{maintheorem}.

\begin{proposition}\label{mudef4}
Fix a based map $f:S^3\to X$ and cross the codomain with $\R$  to obtain a new map $f:S^3\to X\times\R$ and perturb $f$ to be generic and hence an embedding away from circles of double points.  

Let $Y$ indicate the union of images of all lifts of $f$ to $\widetilde{X}\times\R$, and let $Y_1$ be the image of the lift of $f$ based at the basepoint for $\widetilde{X}$. Perturb $Y$ equivariantly so that $Y_1$ intersects other components of $Y$ transversally in circles.

Now $\pi_1(X)$ acts on $\widetilde{X}$, and in particular on $Y$ by permuting the lifts of $f$. Let $Y_g= g \cdot Y_1$.  Given $g\in \pi_1(X)$, let $c_g$ denote the number of components of $Y_1\cap Y_g$ fixed by $g$. Note that if $g\not\in T_X\cup\{1\}$, then $c_g=0$. 
Then


\[\mu_3(f)=\sum_{g \in T_X}{c_g}\cdot g\in \F_2 T_X.\]
\end{proposition}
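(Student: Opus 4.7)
The plan is to reduce to the 5-dimensional formula of Proposition \ref{mudef2}, namely $\mu_3(f) = \sum_{\gamma \in C_f,\, [\gamma] \neq 1} [\gamma]$, by exhibiting, for each $g \in T_X$, a bijection between the set $\{\gamma \in C_f : [\gamma] = g\}$ and the set of components of $Y_1 \cap Y_g$ that are setwise fixed by $g$. Once this bijection is in place, summing over $g \in T_X$ in $\F_2 T_X$ gives the desired identity. Before constructing the bijection I would fix the equivariant perturbation so that $Y_1$ meets every $Y_g$ (for $g \neq 1$) transversally in a 1-manifold, disjoint from the self-intersection locus of $Y_1$; since $\widetilde X \times \R$ is $5$-dimensional this is generic, and equivariance is guaranteed by perturbing $Y_1$ and translating by the $\pi_1(X)$-action.

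For the forward map, start with $\gamma \in C_f$ and $[\gamma] = g \in T_X$. By definition of $C_f$ the preimage $C := f^{-1}(\gamma) \subset S^3$ is a single circle, and $[\gamma] = g \neq 1$ forces $C \to \gamma$ to be a nontrivial 2-to-1 cover. The image $\widetilde f_1(C) \subset Y_1$ then covers $\gamma$ 2-to-1 as well, so it is a single component of $\pi^{-1}(\gamma)$, where $\pi : \widetilde X \times \R \to X \times \R$ denotes the cover. The two local sheets of $f(S^3)$ along $\gamma$ correspond to the two antipodal preimages in $C$ above each $p \in \gamma$, and the nontrivial deck transformation of the cover $\widetilde f_1(C) \to \gamma$ is precisely $g$. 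Thus each $\widetilde p \in \widetilde f_1(C)$ is the $g$-translate of the other lift of $\pi(\widetilde p)$ in $Y_1$, which gives both $\widetilde f_1(C) \subset Y_1 \cap Y_g$ and $g \cdot \widetilde f_1(C) = \widetilde f_1(C)$. Conversely, let $D$ be a circle component of $Y_1 \cap Y_g$ with $g \cdot D = D$. Since $\pi_1(X)$ acts freely on $\widetilde X \times \R$, $g$ restricts to a fixed-point-free involution on $D$, so $\pi|_D : D \to \pi(D) =: \gamma$ is a nontrivial 2-to-1 cover; this forces $[\gamma] = g$, and hence $\gamma \in C_f$ (disconnected preimage would force $[\gamma] = 1$, as each component of a disconnected preimage provides a section of the corresponding cover of $\gamma$). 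Because $f^{-1}(\gamma)$ is connected, the nonempty open-closed subset $\widetilde f_1^{-1}(D) \subset f^{-1}(\gamma)$ equals all of it, and the two assignments are visibly mutually inverse.

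The main obstacle to verify is that disconnected-preimage circles $\gamma \notin C_f$ never produce $g$-fixed components of $Y_1 \cap Y_g$ for $g \neq 1$. When $f^{-1}(\gamma) = \alpha \sqcup \beta$ with $[\gamma] = 1$, either the two images $\widetilde f_1(\alpha), \widetilde f_1(\beta)$ coincide --- forming a self-intersection of $Y_1$, which the equivariant perturbation has separated from all $Y_1 \cap Y_g$ with $g \neq 1$ --- or they are distinct with $h \cdot \widetilde f_1(\alpha) = \widetilde f_1(\beta)$ for some $h \neq 1$, contributing an $h$-orbit of two circles in $Y_1 \cap Y_h$ that are swapped rather than fixed by $h$. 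Either way such $\gamma$ contribute $0$ to every $c_g$. Consequently $c_g \equiv \#\{\gamma \in C_f : [\gamma] = g\} \pmod 2$, and summing over $g \in T_X$ gives
\[
\sum_{g \in T_X} c_g \cdot g \;=\; \sum_{\gamma \in C_f,\, [\gamma] \neq 1} [\gamma] \;=\; \mu_3(f)
\]
in $\F_2 T_X$, as required.
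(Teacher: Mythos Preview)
Your proof is correct and follows essentially the same route as the paper: both arguments establish, for each $g\in T_X$, a bijection between circles $\gamma\in C_f$ with $[\gamma]=g$ and the $g$-fixed components of $Y_1\cap Y_g$, via projection $\pi$ in one direction and lifting (either $p^{-1}(\cdot)\cap(Y_1\cap Y_g)$ or $\widetilde f_1(f^{-1}(\cdot))$) in the other. One minor remark: your final paragraph on disconnected-preimage circles is superfluous once the bijection is in hand, since the backward map already shows every $g$-fixed component projects to some $\gamma\in C_f$ with $[\gamma]=g$; consequently you actually get $c_g = \#\{\gamma\in C_f:[\gamma]=g\}$ on the nose, not just mod~2.
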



\begin{proof}
We will prove this by showing that the given formula for $\mu_3$ is equivalent to that in Proposition \ref{mudef2}.

	Let $p:\widetilde{X}\to X$ denote the universal covering map. Fix $f \in \pi_3(X)$, a generic perturbation $f : S^3 \to X \times I$, and an element $g \in T_X$.  Recall from Proposition \ref{mudef2} that every circle of self-intersection in the image of $f$ corresponds to an element of $\pi_1(X)$. We give bijections


$$
	\{\pi_0(Y_1 \cap Y_g)\mid\text{$g(C)=C$} \}  \xleftrightarrows[\text{p}] 
{\text{$p^{-1}\cap (Y_1\cap Y_g)$}}  \{\text{$\pi_0(f\cap f)\mid [C]=g$, $f^{-1}(C)$ connected}\}.$$
	First, we show that these maps are well-defined and then we verify that they are inverses of one another. Let $C$ be a circle of intersection of $Y_1$ and $Y_g$ 
	that is fixed setwise by $g$ (\ie $g$ acts on $C$ by rotation through $\pi$). 
	Then $p:C\to p(C)$ is a double cover of a circle $p(C)$ of self-intersection in the image of $f$, where $p(C)$ represents $g$. Since $C$ is connected, $f^{-1}(p(C))$ is connected. 

	On the other hand, suppose $C$ is a circle of self-intersection in the image of $f$ corresponding to $g\in T_X$ with $f^{-1}(C)$ connected. Then $\widetilde{C}:=p^{-1}(C)\cap(Y_1\cap Y_g)$ double covers $C$. Since $g$ is nontrivial, $\widetilde{C}$ is a connected circle formed by two arc lifts of $C$. The action of $g$ permutes these arcs, fixing $\widetilde{C}$ setwise. 


	Now consider the two defined maps, $p$ (projection) and $p^{-1}\cap(Y_1\cap Y_g)$ (lifting to $Y_1\cap Y_g$). The composition in either order is clearly the identity. 
	This completes the proof.   
\end{proof}

The following result follows by the same argument as in Proposition \ref{mudef4}. We similarly compare the proposed formula for $\fq$ to that in Proposition \ref{fqdef2}.

\begin{proposition}\label{fqdef4}
Let $S_0, S_1 : S^2 \to X^4$  be based embeddings of spheres in $X$ that are based-homotopic in $X$.  Let $H$ be an immersion $H: (S^2,\ast) \times I \to (X,\ast) \times I$ that has $H|_{S^2 \times 0} = S_0$ when we consider $S_0$ as a map into $X \times \{0\}$ and similarly $H|_{S^2 \times 1} = -S_1$ when we consider $S_1$ as a map into $X \times \{1\}$. 

Let $Y$ indicate the union of images of all lifts of $H$ to $\widetilde{X}\times I$, and let $Y_1$ be the image of the lift of $f$ based at the basepoint for $\widetilde{X}$. Perturb $Y$ equivariantly so that $Y_1$ intersects other components of $Y$ transversally in circles.

Now $\pi_1(X)$ acts on $\widetilde{X}$, and in particular on $Y$ by permuting the lifts of $H$. Let $Y_g= g \cdot Y_1$.  Given $g\in \pi_1(X)$, let $c_g$ denote the number of components of $Y_1\cap Y_g$ fixed by $g$. Note that if $g\not\in T_X\cup\{1\}$, then $c_g=0$. 
Then

\[\fq(S_0,S_1):=\sum_{g\in T_X}c_g\cdot g\in\F_2 T_X/\mu_3(\pi_3(X)).\]
\end{proposition}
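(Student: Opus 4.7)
The plan is to emulate the proof of Proposition \ref{mudef4}, comparing the proposed formula to the 5-dimensional expression for $\fq(S_0,S_1)$ given in Proposition \ref{fqdef2}. Specifically, I would establish, for each fixed $g \in T_X$, a bijection
\[\{C \in C_H : [C] = g\} \longleftrightarrow \{g\text{-fixed components of } Y_1 \cap Y_g\}\]
and then sum over $g$ to identify the two formulas.

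The bijection is defined exactly as in Proposition \ref{mudef4}. In one direction, starting from a $g$-fixed component $\widetilde{C}$ of $Y_1 \cap Y_g$: since $g$ acts freely on $\widetilde{X} \times I$ as a nontrivial deck transformation and fixes $\widetilde{C}$ setwise, $g$ acts on $\widetilde{C}$ by rotation through $\pi$, so $p|_{\widetilde{C}}$ is a double cover of a circle $p(\widetilde{C}) \subset X \times I$. Each point of $p(\widetilde{C})$ has two preimages in $S^2 \times I$ under $H$ (one from $\widetilde{C} \subset Y_1$ and one from $g^{-1}\widetilde{C} \subset Y_1$), so $p(\widetilde{C})$ is a self-intersection circle of $H$; tracking these preimages as one traverses $p(\widetilde{C})$ twice shows $H^{-1}(p(\widetilde{C}))$ is a single connected circle, and the standard path-lifting argument gives $[p(\widetilde{C})] = g$. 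In the other direction, starting from $C \in C_H$ with $[C] = g$, I would let $\widetilde{H}: S^2 \times I \to \widetilde{X} \times I$ be the basepoint lift of $H$ and send $C$ to $\widetilde{H}(H^{-1}(C))$; because $g^2 = 1$ and $H^{-1}(C)$ is a single circle, the lift closes up into a single circle in $p^{-1}(C) \cap Y_1$ that is $\langle g \rangle$-invariant, and hence a $g$-fixed component of $Y_1 \cap Y_g$. Verifying these two maps are mutually inverse uses the identification $p^{-1}(C) \cap Y_1 = \widetilde{H}(H^{-1}(C))$ as a single circle.

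The main obstacle I anticipate is the bookkeeping needed to ensure the correspondence is genuinely a bijection: specifically, to rule out multiple $g$-fixed components of $Y_1 \cap Y_g$ projecting to the same self-intersection circle $C$. This reduces to verifying that $\widetilde{H}$ maps $H^{-1}(C)$ onto a single circle component of $p^{-1}(C)$, which follows from the connectedness of $H^{-1}(C)$ together with the degree-$2$ covering data coming from $g^2 = 1$. Once the bijection is established, summing over $g \in T_X$ pairs up terms with the expression from Proposition \ref{fqdef2} to give
\[\fq(S_0, S_1) = \sum_{g \in T_X} c_g \cdot g \in \F_2 T_X / \mu_3(\pi_3(X)),\]
with well-definedness modulo $\mu_3(\pi_3(X))$ inherited from that proposition.
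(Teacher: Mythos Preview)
Your proposal is correct and follows exactly the approach the paper intends: the paper's proof of Proposition~\ref{fqdef4} simply says to repeat the argument of Proposition~\ref{mudef4}, comparing with the formula in Proposition~\ref{fqdef2}, which is precisely the bijection you set up and verify. Your additional care in checking that only one $g$-fixed component lies over each $C$ is a reasonable explicit spelling-out of what the paper leaves implicit.
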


\subsection{Based homotopy versus free homotopy}\label{sec:htpy}
In this section, we have given many definitions of $\fq(S_0,S_1)$ when $S_0,S_1$ are based-homotopic 2-spheres in a 4-manifold $X^4$. In the main theorem of this paper, we consider 2-spheres which are homotopic, but do not specify that this homotopy is based. Schneiderman and Teichner \cite[Lemma 2.1]{ST} show that when $S_0$ and $S_1$ have a common geometric dual (and agree near the dual), then if $S_0$ and $S_1$ are homotopic then they are based-homotopic. We need an analogous lemma in order to deal with basepoints.

\begin{proposition}\label{lem:nobasegen}
Let $S_0$ and $S_1$ be 2-spheres smoothly embedded in a smooth 4-manifold $X^4$. Let $G$ be a 2-sphere immersed in $X$. Assume that $S_0$ intersects $G$ transversally once at a point $z$ and that $S_0$ and $S_1$ are homotopic. Then after an isotopy of $S_1$, $S_0$ and $S_1$ are based-homotopic with basepoint $z$.
\end{proposition}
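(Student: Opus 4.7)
My plan is to start from a free homotopy between $S_0$ and $S_1$, extract the arc in $X$ traced by the basepoint, and then cancel this arc by an ambient isotopy that moves $S_1$ so that it passes through $z$.

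Concretely, fix a free homotopy $H\colon S^2\times I\to X$ with $H|_{S^2\times 0}=S_0$, $H|_{S^2\times 1}=S_1$, and $H(\ast,0)=z$, where $\ast\in S^2$ is a chosen basepoint. The trace $\alpha(t):=H(\ast,t)$ is a smooth path in $X$ from $z$ to $p_1:=H(\ast,1)\in S_1$; after a small perturbation (using that $\dim X = 4$) I may assume $\alpha$ is an embedded arc. I would then extend the reversed arc $\bar\alpha$ to an ambient isotopy $\Phi_t\colon X\to X$ with $\Phi_0=\id$ and $\Phi_t(p_1)=\alpha(1-t)$, which is standard via the isotopy extension theorem. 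Setting $S_1':=\Phi_1(S_1)$ produces a sphere smoothly isotopic to $S_1$ containing $z=\Phi_1(p_1)$.

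Next, I would concatenate $H$ with the track of $\Phi_t$ applied to $S_1$ to obtain a free homotopy $H'\colon S^2\times I\to X$ from $S_0$ to $S_1'$. By construction, the basepoint trace of $H'$ is the loop $\alpha\cdot\bar\alpha$ based at $z$, which is nullhomotopic rel endpoints. Using this explicit nullhomotopy, I would modify $H'$ on a small collar neighborhood $V$ of $\ast\in S^2$ so that the restriction to $\{\ast\}\times I$ is the constant map at $z$, while agreeing with $H'$ on $\partial V\times I$; the resulting map is a based homotopy from $S_0$ to $S_1'$ at $z$, as desired.

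I expect no serious obstacle in this strategy: both the extension of a smooth arc to an ambient isotopy of a 4-manifold and the replacement of a free homotopy having nullhomotopic basepoint trace by a genuinely based homotopy are routine general-position arguments. It is worth noting that the sphere $G$ enters the hypothesis only to single out the point $z$ as a specific point of $S_0$; the fact that $G$ is a transverse (let alone framed) dual sphere plays no role in this argument, which would work for any distinguished point $z\in S_0$.
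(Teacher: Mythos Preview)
Your argument is correct and takes a different, more elementary route than the paper. The paper uses the dual sphere $G$ in an essential way: it makes the track $H$ transverse to $G\times I$, follows the arc component $L_0$ of $H^{-1}(G\times I)$ emanating from $z\times 0$, isotopes $S_1$ \emph{along $G$} so that the other end of $L_0$ also lands at $z$, and then argues that the basepoint trace is homotopic (via the simply-connected domain $S^2\times I$) to $H(L_0)\subset G\times I$, which is nullhomotopic because $G$ is a $2$-sphere. You bypass this by dragging $S_1$ back along the reverse of whatever trace the free homotopy already produces, so the concatenated trace is tautologically $\alpha\cdot\bar\alpha$, hence null; converting such a homotopy to a based one is then the standard fact that the map $[(S^2,\ast),(X,z)]\to[S^2,X]$ has fibers given by the $\pi_1(X,z)$-action determined by the trace. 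Your approach is strictly more general---it shows the statement holds for any choice of $z\in S_0$, with no dual sphere needed---and the same trick would equally streamline the paper's positive-genus analogue. The paper's argument has the minor feature that its isotopy of $S_1$ takes place inside $G$, but nothing downstream appears to exploit this.
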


\begin{proof}
Take $H:S^2\times I\to X\times I$ to be a free homotopy of $S_0$ to $S_1$. Note $S_i$ represents an element $[S_i]\in\pi_2(X,z)$. Fix $z_0\in S^2$ so that $H(S^2\times 0)=z\times 0$. Then $H(z_0\times I)$ represents an element $g$ of $\pi_1(X,z)$ with $g[S_0]=[S_1]$.

Take $H$ to be transverse to $G$ and consider $L=H^{-1}(G\times I)$, a properly embedded 1-dimensional submanifold of $S^2\times I$. Note the boundary of $L$ consists of $z_0\times0$, and an odd number of points of the form $y_i\times 1$. Let $L_0$ be the component of $L$ containing $z_0\times 0$ and assume the other endpoint of $L_0$ is $y_0\times 1$. Compose $H$ with an isotopy of $S_1$ taking $H(y_0)$ to $z$ by isotopy in $G$ and set the resulting composed homotopy to be $H$. Now $\partial L_0=z_0\times\{0,1\}$. Since $S^2\times I$ is simply-connected, this implies $H(L_0)$ represents $g$. Since $H(L_0)\subset G\times I\cong S^2\times I$ (away from self-intersections), $g\in\pi_1(X,z)$ is the identity and we conclude $[S_0]=[S_1]\in\pi_2(X,z)$. See Figure \ref{fig:basedhtpy} for an illustration.
\end{proof}

\begin{figure}
	  \labellist
\small\hair 2pt
 \pinlabel $\textcolor{red}{S_1\times 1}$ at 90 1350
 \pinlabel $\textcolor{red}{S_0\times 0}$ at 90 200
 \pinlabel $G\times 0$ at 170 50
 \pinlabel $G\times 1$ at 220 1500
 \pinlabel $\textcolor{red}{H(S^2\times I)}$ at -100 800
 \pinlabel $L_0$ at 320 800
\endlabellist
    \centering
    \includegraphics[width=100mm]{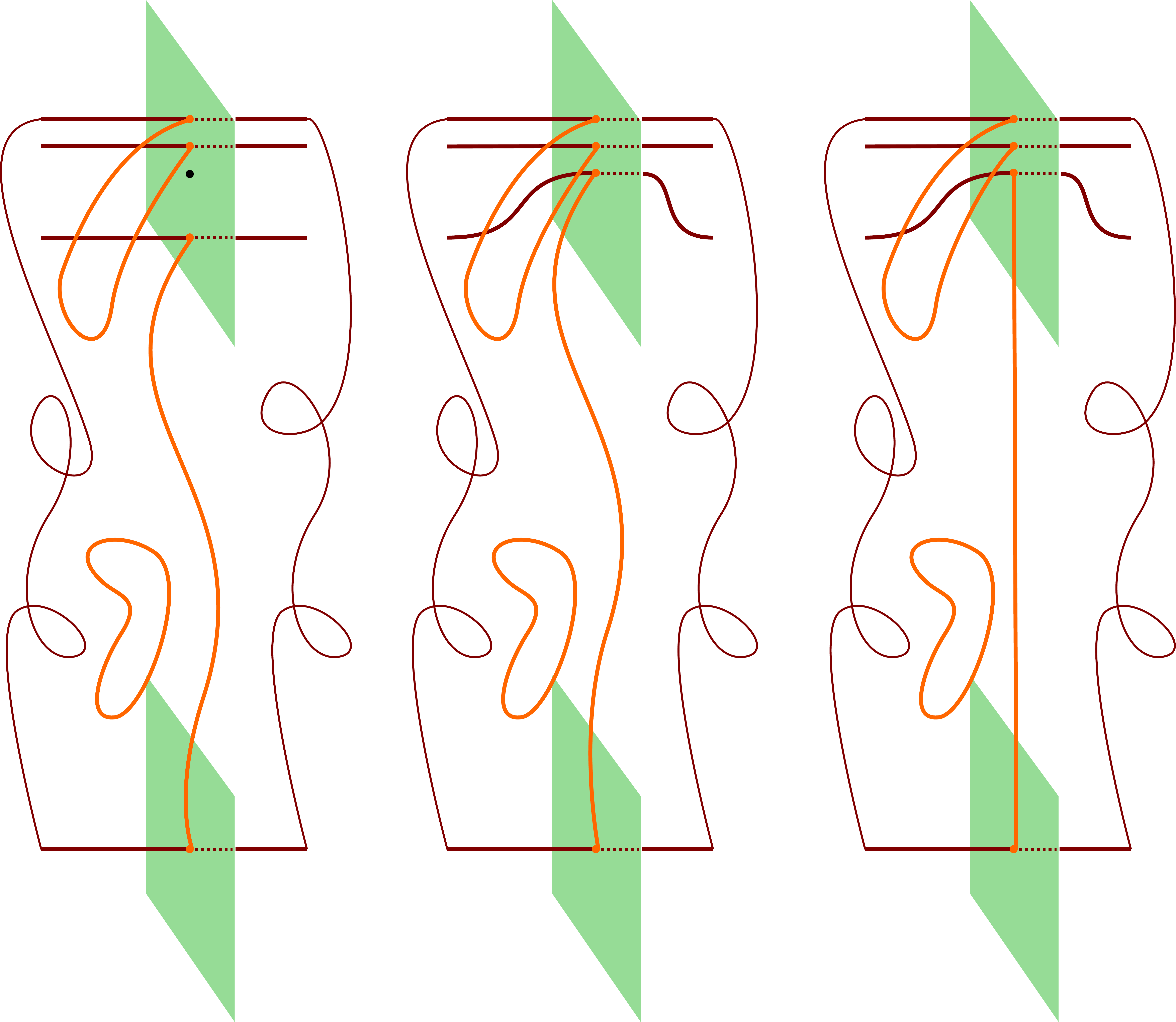}
    \caption{The proof of Lemma \ref{lem:nobasegen}. {\bf{Left:}} $H$ is a homotopy from $S_0$ to $S_1$. The arc $L_0$ is the component of $H^{-1}(G\times I)$ with $z\times 0$ as an endpoint. {\bf{Middle:}} We isotope $S_1$ near $G$ and concatenate with $G$ to find a new homotopy where $L_0$ has boundary $z\times\{0,1\}$. {\bf{Right:}} Let $p:X\times I\to X$ be projection. Since $p\circ H(L_0)$ is contained in $G$ (away from self-intersections of $G$), $p\circ H(L_0)$ is a nullhomotopic loop. Contracting this loop yields a homotopy based at $z$.}
    \label{fig:basedhtpy}
\end{figure}

In Theorem \ref{maintheorem} (or in the 4D light bulb theorem), we assume that spheres $S_0$ and $S_1$ are homotopic. By Proposition \ref{lem:nobasegen}, there exists another 2-sphere $S'_1$ isotopic to $S_1$ so that $S_0$ and $S'_1$ are based-homotopic. The sphere $S_0$ is concordant (resp. isotopic) to $S_1$ if and only if it is concordant (resp. isotopic) to $S'_1$. Thus, we may without loss of generality take $S_0$ to be based-homotopic to $S_1$.

\section{Concordance of surfaces in 4-manifolds}\label{sec:2spheres}

In \cite{NS}, Sunukjian proves the following theorem that generalizes the result of Kervaire that all 2-knots are slice \cite{kervaire}:

\begin{theorem}[{\cite[Theorem 6.1]{NS}}] \label{simply-connected}
Let $X^4$ be a simply-connected 4-manifold. Let $\Sigma_0$ and $\Sigma_1$ be compact, oriented, embedded, homologous surfaces in $X^4$ of the same genus.  Then $\Sigma_0$ and $\Sigma_1$ are concordant.
\end{theorem}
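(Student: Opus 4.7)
The plan is to first upgrade the hypothesis ``homologous'' to ``homotopic'', and then to resolve the singularities of the track of a homotopy in the $5$-manifold $X \times I$.

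For the upgrade, I would compute $[\Sigma_g, X]$ via obstruction theory on the standard CW structure of $\Sigma_g$ (one $0$-cell, $2g$ $1$-cells, one $2$-cell attached along $\prod [a_i,b_i]$). Since $\pi_1(X) = 1$ and $\pi_2(\Sigma_g) = 0$, a map on the $1$-skeleton is unique up to homotopy, and extensions across the $2$-cell are parameterized by $\pi_2(X) \cong H_2(X)$ (via Hurewicz, since $X$ is simply-connected). The resulting bijection $[\Sigma_g, X] \cong H_2(X)$ sends a map to the pushforward of the fundamental class, so two embedded genus-$g$ surfaces representing the same class in $H_2(X)$ are homotopic as maps $\Sigma_g \to X$.

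Let $H : \Sigma_g \times I \to X \times I$ denote the track of such a homotopy, sending $\Sigma_g \times \{i\}$ onto $\Sigma_i \times \{i\}$. After a generic perturbation, a dimension count ($3+3-5 = 1$) shows that the self-intersection set of $H$ is a disjoint union of circles. For each such circle $\gamma$, the preimage $H^{-1}(\gamma)$ is either a single circle double-covering $\gamma$ or a pair of disjoint circles, exactly as in Propositions \ref{mudef4} and \ref{fqdef4}. The former case corresponds to a $2$-torsion element of $\pi_1(X)$ and so cannot arise when $X$ is simply-connected. Hence every self-intersection circle $\gamma$ has preimage two disjoint circles $\gamma_1 \sqcup \gamma_2$, each null-homotopic in the simply-connected $X \times I$.

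I would then remove each such $\gamma$ via a $5$-dimensional analog of the Whitney move: use null-homotopies of $\gamma_1$ and $\gamma_2$ to produce an embedded disk in $X \times I$ that guides a local modification of $H$ eliminating $\gamma$. The main obstacle will be ensuring that these local modifications preserve the product structure $\Sigma_g \times I$ of the domain, as a naive surgery along an arc between $\gamma_1$ and $\gamma_2$ could alter the genus of the cross-sections. Here the extra room in dimension $5$ is essential: the simple-connectedness of $X \times I$ guarantees that all required Whitney disks and cancellation handles can be found embedded, so each modification can be arranged to be a trivial cobordism move. Iterating over all self-intersection circles then yields the desired smooth embedded concordance between $\Sigma_0$ and $\Sigma_1$.
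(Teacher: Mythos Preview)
Your route diverges from Sunukjian's argument (which this paper cites rather than reproves, but whose strategy is summarized just after the statement and in Theorem~\ref{simply-connected - heart}). Sunukjian does not start from a homotopy: since $\Sigma_0$ and $\Sigma_1$ are homologous, there is already an \emph{embedded} $3$-manifold $Y\subset X\times I$ with $\partial Y=\Sigma_0\times\{0\}\sqcup -\Sigma_1\times\{1\}$. The work is then to perform ambient Dehn $2$-surgery on $Y$ inside $X\times I$ so as to convert it into the product $\Sigma_g\times I$. Simple-connectivity of $X\times I\setminus Y$ supplies the surgery disks, and the key technical ingredient is that all closed oriented $3$-manifolds are spin-cobordant, which guarantees a surgery link realizing the desired $3$-manifold with \emph{admissible} framings (cf.\ Lemma~\ref{lem:framing}). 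In short, Sunukjian trades ``resolve singularities of an immersed product'' for ``fix the topology of an embedded cobordism,'' and it is the second problem that has a clean solution.

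Your proposal has two concrete gaps. First, the claim that self-intersection circles with connected preimage cannot occur when $\pi_1(X)=1$ is false. Such a circle $\gamma$ only forces $[\gamma]^2=1$ in $\pi_1(X)$; in the simply-connected case this gives $[\gamma]=1$, not nonexistence. (Compare Proposition~\ref{mudef2}, which explicitly throws away the $[\gamma]=1$ contributions --- they are there, they just do not enter $\fq$.) One can write down a local model in $S^1\times\R^4$ where the two sheets rotate into each other as one traverses $\gamma$, and then graft it into any immersion $S^3\to\R^5$. Second, and more seriously, you correctly flag that the hard step is eliminating a self-intersection circle while preserving the product structure $\Sigma_g\times I$ of the domain, but you do not carry this out. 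The natural resolution is exactly Move~\ref{move2}: surger one sheet along a disk bounded by the circle in the other sheet. That move \emph{changes the topology of the $3$-manifold}, and the appeal to ``extra room in dimension $5$'' does not explain how to avoid this. Recovering the product afterward is precisely where Sunukjian invokes the spin-cobordism argument; without that argument, or a genuine substitute, the sketch does not close.
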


Sunukjian then goes on to use Theorem \ref{simply-connected} to study manifolds with nontrivial fundamental group \cite[Theorem 6.2]{NS}. The main strategy is to require $\pi_1(\Sigma_i)$ to include trivially into the 4-manifold so that $\Sigma_i$ can be lifted to the universal cover $\widetilde{X}$. In $\widetilde{X}\times I$, the various lifts of $\Sigma_0\times\{0\},-\Sigma_1\times\{1\}$ then cobound embedded copies $Y_g,g\in\pi_1(X)$ of $\Sigma_i\times I$ which pairwise intersect. In the cover, one may attempt to equivariantly surger these 3-manifolds to force them to be disjoint, and then use the proof methods of the simply-connected case to do further equivariant surgery to obtain a disjoint collection of concordances. These concordances then project to a concordance in $X\times I$. 

There is a problem in this approach, as pointed out in \cite[\S1]{Maggie}, that calls attention to 2-torsion in $\pi_1(X)$. 
A difficulty arises when some element $g$ of $\pi_1(X)$ fixes a circle $C$ of intersection in $Y_1\cap Y_g$. If we attempt to surger $Y_1$ at $C$ to remove this intersection, then in order to do the surgery equivariantly we must then surger $Y_g$ at $C$ as well -- potentially causing us to accidentally create a new circle of intersection near $C$ if we are not careful during this surgery operation. In fact, this obstacle is precisely the motivation for the definitions of $\mu_3$ and $\fq$ given in Propositions \ref{mudef4} and \ref{fqdef4}. There is an additional subtlety that requires us to assume that the dual sphere $G$ is framed; this is necessary in one of the constructive moves used by Sunukjian  (Move \ref{move1}).





Before proving Theorem \ref{maintheorem}, we review the techniques used in \cite{NS} to modify embedded 3-manifolds in a 5-manifold. For our modification of the argument, we will also need to use these techniques on immersed 3-manifolds in a 5-manifold.  The general 
principle is to surger the 3-manifold along 4-dimensional handles embedded in the ambient 5-manifold. 

\begin{definition}[Ambient Dehn 1-surgery]

Let $Y^3$ be a 3-manifold embedded in a 5-manifold $W^5$.  Suppose that we are given an arc $\alpha$ in $W$ with endpoints on $Y$ and interior away from $Y$. Frame $\alpha$; now the unit 3-ball bundle over $\alpha$ yields a $I\times B^3$, along the boundary of which we may surger $Y$ to obtain an embedded 3-manifold $Y'$. Choose the framing of $\alpha$ so that $Y'\cong Y\# S^1\times S^2$. We say that $Y'$ is obtained from $Y$ by {\emph{ambient Dehn 1-surgery}} along $\alpha$. The ``1" refers to the fact that $Y$ is being surgered along a 4-dimensional 1-handle.
\end{definition}

\begin{definition}\label{def:ambientDehn}[Ambient Dehn 2-surgery]
Let $Y^3$ be an oriented 3-manifold embedded in an oriented 5-manifold $W^5$.  Suppose that we are given an oriented circle $\gamma \subset Y$ with an integral framing.
If some pushoff of $\gamma$ into $W$ is null-homotopic in $W$, then there exists an embedded disk $\Delta \subset W$ with $Y \cap \Delta = \gamma$ ($\Delta$ is embedded since we are in dimension 5).  If $\Delta$ has the property that there exists a 2-dimensional subbundle $B$ of its trivial 3-dimensional normal bundle $N_W(\Delta)$ such that $B$ has a trivialization extending the trivialization coming from the framing on the boundary circle in $N_Y(\gamma)$, then the unit disk bundle over $\Delta$ in $B$ yields a copy of $D^2 \times D^2$ in $W$ that intersects $Y$ in a tubular neighborhood of $\gamma$, such that $(Y - S^1 \times D^2) \cup (D^2 \times S^1)$ is the 3-manifold obtained by performing the Dehn surgery on $Y$ along $\gamma$ using the specified framing. We call this move {\emph{ambient Dehn 2-surgery}}, where the ``2" refers to the fact that $Y$ is being surgered along a 4-dimensional 2-handle.  

For a framing $\phi$ of $N_Y \gamma$ and a given disk $\Delta$, we say that a framing  is a \emph{$\Delta$-admissible} framing if such a subbundle $B \subset N_W(\Delta)$ as above exists, with a framing on $B$ that extends $\phi$.  
\end{definition}

\begin{lemma}\label{lem:framing}
    Fix a framing $\phi$ of $N_Y(\gamma)$ where the notation is as in Definition \ref{def:ambientDehn}.  Then the $\Delta$-admissible framings on $\gamma$ are exactly the result of of the even integers acting on $\phi$, or the result of all of the odd integers acting on $\phi$, where the free and transitive action of $\mathbb{Z}$ on the set of framings of $\gamma$ is as in Figure \ref{fig:framinginteger}.
\end{lemma}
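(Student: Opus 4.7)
The plan is to reduce the lemma to a one-line obstruction-theory computation in $\pi_1$ of the Stiefel manifold $V_2(\R^3)\cong SO(3)$ of orthonormal $2$-frames in $\R^3$.

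First I would set up the picture along $\gamma$. Because $\Delta$ is a disk, the rank-$3$ bundle $N_W(\Delta)$ is trivial; fix a trivialization $N_W(\Delta)\cong\Delta\times\R^3$. Along $\gamma=\partial\Delta$, arranging $T\Delta|_\gamma\cap TY|_\gamma=T\gamma$ by a small generic perturbation gives a canonical inclusion of $N_Y(\gamma)$ as a rank-$2$ subbundle of $N_W(\Delta)|_\gamma\cong\gamma\times\R^3$. In these terms, a framing of $N_Y(\gamma)$ is precisely a map $\phi:\gamma\to V_2(\R^3)$, and a choice of $\Delta$-admissible subbundle $B\subset N_W(\Delta)$ together with an extending trivialization of $B$ is precisely an extension of this map from $\gamma=\partial\Delta$ to all of $\Delta$.

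Next I would carry out the obstruction theory. The identification $V_2(\R^3)\cong SO(3)$ via $(e_1,e_2)\mapsto(e_1,e_2,e_1\times e_2)$ gives $\pi_1\bigl(V_2(\R^3)\bigr)=\Z/2$, and since $\Delta\cong D^2$, extendability of $\phi$ across $\Delta$ is governed entirely by its class $[\phi]\in\pi_1\bigl(V_2(\R^3)\bigr)$: the map $\phi$ extends if and only if $[\phi]=0$.

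Finally I would identify the $\Z$-action on framings with the correct loop class. Acting on $\phi$ by $k\in\Z$ (as in Figure \ref{fig:framinginteger}) post-composes $\phi$ by a loop in the standard $SO(2)\subset SO(3)$ that rotates the $2$-frame through $k$ full turns. The inclusion $SO(2)\hookrightarrow SO(3)$ is surjective on $\pi_1$, so $[k\cdot\phi]=[\phi]+k\pmod 2$. Hence the admissible framings are exactly those $k\cdot\phi$ with $k\equiv[\phi]\pmod 2$, which is the coset of $\phi$ under either the even integers (when $[\phi]=0$) or the odd integers (when $[\phi]=1$), as claimed. The only non-formal step is this final identification --- that one full twist of the framing really does represent the nontrivial generator of $\pi_1(SO(3))$ --- and this is where I would have to be most careful.
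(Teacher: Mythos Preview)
Your proposal is correct and follows essentially the same approach as the paper. Both arguments fix a trivialization of $N_W(\Delta)$, identify the obstruction to extending $\phi$ as a class in $\pi_1(SO(3))\cong\Z/2$, and then observe that the generator of the $\Z$-action changes this class; the only cosmetic difference is that the paper stabilizes $\phi$ by the complementary line bundle $\xi$ to land in $GL^+(3)\simeq SO(3)$, whereas you work directly with $2$-frames via $V_2(\R^3)\cong SO(3)$.
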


	\begin{figure}
	    \centering
	    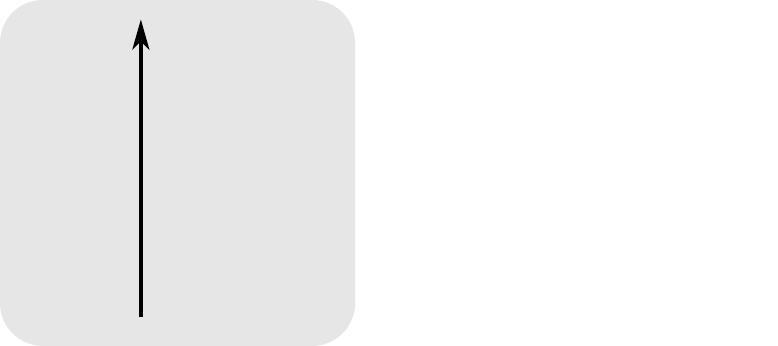
	    \caption{{\bf{Left:}} A framing $\phi$ of $N_Y(\gamma)$, where $\gamma$ is a curve in 3-manifold $Y$. {\bf{Right:}} There is an action of $\mathbb{Z}$ on the set of framings of $N_Y(\gamma)$. Here we draw $1\cdot\phi$.}
	    \label{fig:framinginteger}
	\end{figure}

\begin{proof}
Note that there is a unique framing, up to homotopy through framings $F : N_W(\Delta) \xrightarrow{\sim} D^2 \times \mathbb{R}^2$.  By restricting $F$ to the bundle over $\gamma$, we have 
$$
F : N_W(\gamma) = N_Y(\gamma) \oplus \xi \xrightarrow{\sim} S^1 \times \mathbb{R}^3
$$
where $\xi$ is the line bundle as is Figure \ref{fig:framings}.  The framing $\phi$ gives another framing 
$$
\phi \oplus i : N_Y(\gamma) \oplus \xi \xrightarrow{\sim} S^1 \times \mathbb{R}^3
$$
where $\xi$ is oriented as in Figure \ref{fig:framings} and $i$ is the corresponding trivialization.  Comparing these two trivializations, we have
$$
(\phi \oplus i) \circ F^{-1} : S^1 \times \mathbb{R}^3 \xrightarrow{\sim} S^1 \times \mathbb{R}^3
$$
which can be thought of as a map $[\phi] \in \pi_1(\text{GL}^+(\mathbb{R}^3)) \cong \pi_1(\text{SO}(3)) \cong \mathbb{Z}/2$.

	\begin{figure}
	    \centering
	    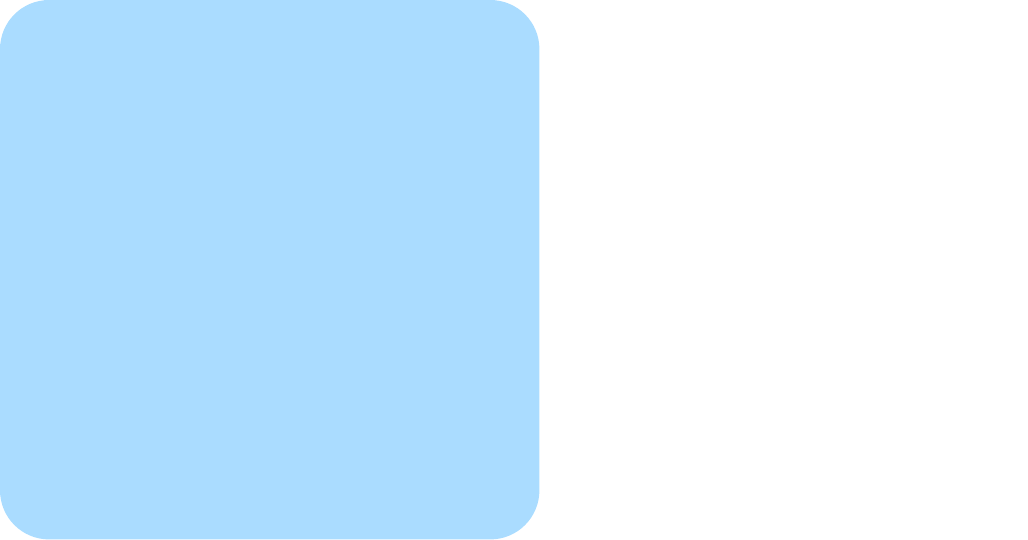
	    \caption{{\bf{Left:}} A schematic of a disk $\Delta$ with boundary $\gamma\subset Y^3\subset W^5$. {\bf{Right:}} A cross-section of $\gamma$. The normal bundle $N_\gamma(W)$ is 4-dimensional. It contains the 2-dimensional subbundle $N_Y(\gamma)$ and the line bundles $T\Delta|_\gamma$ and $\xi$, all of which are normal to each other.}
	    \label{fig:framings}
	\end{figure}

Now $\phi$ is a $\Delta$-admissible framing if and only if there exists a 2-dimensional subbundle $B \subset N_W \Delta$ extending $N_Y(\gamma)$ together with a framing $\Phi : B \xrightarrow{\sim} D^2 \times \mathbb{R}^2$ such that the following commutes:
$$
\begin{tikzcd}
B \arrow{r}{\Phi} & D^2 \times \mathbb{R}^2  \\
N_Y(\gamma) \arrow{r}{\phi} \arrow[u, hook,"\partial"] & S^1 \times \mathbb{R}^2 \arrow[u, hook,"\partial"]
\end{tikzcd}
$$
or equivalently, if the trivialization $F$ can be chosen so that the following commutes:
$$
\begin{tikzcd}
N_W(\Delta) \arrow{r}{F} & D^2 \times \mathbb{R}^2  \\
N_Y(\gamma) \oplus \xi \arrow{r}{\phi} \arrow[u, hook,"\partial"] & S^1 \times \mathbb{R}^3 \arrow[u, hook,"\partial"]
\end{tikzcd}
$$
But this is equivalent to $[\phi] = 0$.  To complete the proof, note that when $1 \in \mathbb{Z}$ acts on $\phi$ as in Figure \ref{fig:framinginteger}, $[1 \cdot \phi] \neq [\phi]$.
\end{proof}

\begin{lemma}\label{lem:tubetoframed}
    Let $\Delta$ be a disk in a 5-manifold with $\gamma:=\boundary\Delta$ contained in a 3-manifold $Y$ (and $\mathring{\Delta}\cap Y=\emptyset$). Let $R$ be a 2-sphere in $W$ with trivial normal bundle that is disjoint from $\Delta$ and $Y$. Let $\widetilde{\Delta}$ be a disk obtained from $\Delta$ by tubing $\Delta$ to $R$. Then a framing $\phi$ of $N_Y(\gamma)$ is $\Delta$-admissible if and only if it is $N_Y(\widetilde{\Delta})$ admissible.
\end{lemma}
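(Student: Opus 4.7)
The plan is to invoke the $\pi_1(\mathrm{SO}(3))\cong\mathbb{Z}/2$ characterization of admissibility implicit in the proof of Lemma~\ref{lem:framing}: a framing $\phi$ of $N_Y(\gamma)$ is $\Delta$-admissible precisely when the class $[\phi]_\Delta\in\pi_1(\mathrm{SO}(3))$ obtained by comparing $\phi\oplus i$ with the restriction $F|_\gamma$ of any trivialization $F:N_W(\Delta)\to D^2\times\mathbb{R}^3$ vanishes. The analogous class $[\phi]_{\widetilde{\Delta}}$ is defined using any trivialization $\widetilde{F}$ of $N_W(\widetilde{\Delta})$. The lemma thus reduces to showing $[\phi]_\Delta=[\phi]_{\widetilde{\Delta}}$ for all $\phi$, and for that it suffices to exhibit $F$ and $\widetilde{F}$ with $F|_\gamma=\widetilde{F}|_\gamma$.

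To construct such a pair, I would fix an arbitrary $F$ and extend it piece-by-piece to $\widetilde{\Delta}=(\Delta\setminus D)\cup T\cup(R\setminus D')$, where $D\subset\Delta$ and $D'\subset R$ are the small disks excised by the tubing and $T$ is the connecting tube. On $\Delta\setminus D$ there is nothing to do. On the tube $T$, which deformation retracts to $\partial D$, the trivialization $F|_{\partial D}$ extends over $T$ uniquely up to homotopy. The only potentially obstructed step is extending across the disk $R\setminus D'$: choosing any trivialization of $N_W(R\setminus D')$ (which exists because $R$ has trivial normal bundle by hypothesis, so $N_W(R\setminus D')$ is trivial), the obstruction is the class in $\pi_1(\mathrm{SO}(3))\cong\mathbb{Z}/2$ of the difference, on $\partial D'$, between this trivialization and the one transported across $T$.

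To see that this obstruction vanishes, I would consider the embedded sphere $\Sigma=D\cup T\cup(R\setminus D')$ in $W$; sliding $D$ along $T$ realizes an ambient isotopy $\Sigma\simeq R$, so $N_W(\Sigma)$ is trivial. Expressing $N_W(\Sigma)$ via the clutching construction along $\partial D$ from a trivialization on $D$ (supplied by $F|_D$) and one on $T\cup(R\setminus D')$, triviality of $N_W(\Sigma)$ is exactly the statement that the obstruction above is zero. Hence $\widetilde{F}$ extending $F$ exists, and in particular $F|_\gamma=\widetilde{F}|_\gamma$; the classes $[\phi]_\Delta$ and $[\phi]_{\widetilde{\Delta}}$ are then computed against the same reference framing on $\gamma$ and so agree. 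The step I expect to be the main obstacle is the verification that $\Sigma$ is smoothly isotopic to $R$ inside $W$, which requires both the trivial normal bundle of $R$ and a standard (trivially framed) annulus model for the tube; once that is checked, the rest is a routine obstruction-theoretic exercise with $\mathrm{SO}(3)$-bundles.
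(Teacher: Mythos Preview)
Your argument is correct and follows the same underlying strategy as the paper: extend a trivialization from $\Delta$ across the tube and over $R\setminus D'$, using triviality of $N_W(R)$ for the last step. The paper carries this out directly with a 2-frame $(V_1,V_2)$, asserting that one can choose $\omega_1,\omega_2$ on $R$ agreeing with $V_1,V_2$ near $\delta\cap R$, and then handles the reverse implication separately via Lemma~\ref{lem:framing}; you instead work with the full rank-3 trivialization and the class $[\phi]\in\pi_1(\mathrm{SO}(3))$ from the proof of Lemma~\ref{lem:framing}, which gives both directions simultaneously. Your clutching argument with $\Sigma=D\cup T\cup(R\setminus D')$ makes explicit the vanishing of the extension obstruction over $R\setminus D'$ that the paper passes over in the phrase ``we can take $\omega_1,\omega_2$ to agree with $V_1,V_2$ near $\delta\cap R$.'' As for the step you flag as the main obstacle, the isotopy $\Sigma\simeq R$ is immediate (collapse the tube), and in any case only homotopy is needed: oriented rank-3 bundles over $S^2$ are detected by $w_2$, and $w_2(N_W(\Sigma))=w_2(TW)|_{[\Sigma]}=w_2(TW)|_{[R]}=0$.
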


\begin{proof}
See Figure \ref{fig:tubetoframed}.

Note that $\Delta$ can be obtained from $\widetilde{\Delta}$ by tubing $\widetilde{\Delta}$ to a parallel copy of $R$ with opposite orientation, so it is sufficient to prove one direction of implication.

Let $\phi$ be a framing of $N_Y(\gamma)$, i.e. a choice of two non-vanishing vector fields $v_1,v_2$ on $\nu(\gamma)$ that span $N_Y(\gamma)|_p$ for each $p$ in $\gamma$. Assume $\phi$ is $\Delta$-admissible, so that $v_1$ and $v_2$ can be extended to non-vanishing vector fields $V_1$ and $V_2$ (respectively) on a neighborhood of $\Delta$ so that their restriction to $\Delta$ spans some 2-dimensional subbundle $B_\Delta$ of $N_W(\Delta)$. 

Now $\widetilde{\Delta}$ is obtained by tubing $\Delta$ to $R$ along a framed arc $\delta$. Homtope $V_1,V_2$ so that $V_1(p)$ and $V_2(p)$ agree with the framing of $\delta$ at $p=\delta\cap\Delta$, and then use the framing of $\delta$ to extend $V_1,V_2$ to a neighborhood of $\delta$.

Since $R$ has trivial normal bundle, we can choose non-vanishing vector fields $\omega_1,\omega_2$ on near $R$ whose restriction to $R$ spans a 2-dimensional subbundle $B_R$ of $N_W(\Delta)$ whose total space is $R\times D^2$. Moreover, we can take $\omega_1,\omega_2$ to agree with $V_1,V_2$ near $\delta\cap R$. Therefore, we can combine $V_1$ with $\omega_1$ and $V_2$ with $\omega_2$ to obtain a framing of $\widetilde{\Delta}$ that extends $\phi$. We conclude that $\phi$ is $\widetilde{\Delta}$-admissible.

On the other hand, if $\phi$ is {\emph{not}} $\Delta$-admissible, then $1\cdot\phi$ is $\Delta$-admissible. By the above argument, $1\cdot\phi$ is $\widetilde{\Delta}$-admissible. Then by Lemma \ref{lem:framing}, $\phi$ is not $\widetilde{\Delta}$-admissible.

\end{proof}

\begin{figure}
    \centering
\begingroup%
  \makeatletter%
  \providecommand\color[2][]{%
    \errmessage{(Inkscape) Color is used for the text in Inkscape, but the package 'color.sty' is not loaded}%
    \renewcommand\color[2][]{}%
  }%
  \providecommand\transparent[1]{%
    \errmessage{(Inkscape) Transparency is used (non-zero) for the text in Inkscape, but the package 'transparent.sty' is not loaded}%
    \renewcommand\transparent[1]{}%
  }%
  \providecommand\rotatebox[2]{#2}%
  \newcommand*\fsize{\dimexpr\f@size pt\relax}%
  \newcommand*\lineheight[1]{\fontsize{\fsize}{#1\fsize}\selectfont}%
  \ifx\svgwidth\undefined%
    \setlength{\unitlength}{283.66885208bp}%
    \ifx\svgscale\undefined%
      \relax%
    \else%
      \setlength{\unitlength}{\unitlength * \real{\svgscale}}%
    \fi%
  \else%
    \setlength{\unitlength}{\svgwidth}%
  \fi%
  \global\let\svgwidth\undefined%
  \global\let\svgscale\undefined%
  \makeatother%
  \begin{picture}(1,0.30400618)%
    \lineheight{1}%
    \setlength\tabcolsep{0pt}%
    \put(0,0){\includegraphics[width=\unitlength,page=1]{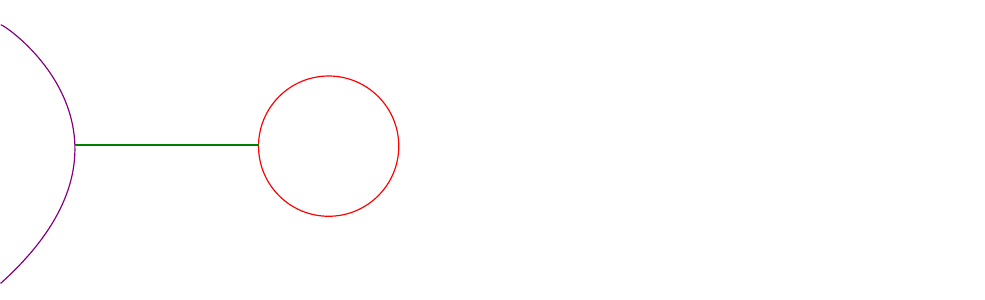}}%
    \put(0.12406351,0.19635319){\color[rgb]{0,0.50196078,0}\makebox(0,0)[lt]{\lineheight{1.25}\smash{\begin{tabular}[t]{l}$\delta$\end{tabular}}}}%
    \put(0.36143843,0.03675725){\color[rgb]{1,0,0}\makebox(0,0)[lt]{\lineheight{1.25}\smash{\begin{tabular}[t]{l}$R$\end{tabular}}}}%
    \put(-0.00304825,0.09432208){\color[rgb]{0.50196078,0,0.50196078}\makebox(0,0)[lt]{\lineheight{1.25}\smash{\begin{tabular}[t]{l}$\Delta$\end{tabular}}}}%
    \put(0,0){\includegraphics[width=\unitlength,page=2]{tubetoframed.pdf}}%
    \put(0.56964505,0.09432208){\color[rgb]{0.50196078,0,0.50196078}\makebox(0,0)[lt]{\lineheight{1.25}\smash{\begin{tabular}[t]{l}$\tilde{\Delta}$\end{tabular}}}}%
    \put(0,0){\includegraphics[width=\unitlength,page=3]{tubetoframed.pdf}}%
  \end{picture}%
\endgroup%

    \caption{Here, $\widetilde{\Delta}$ is a disk obtained by tubing a disk $\Delta$ to a framed sphere $R$ along a framed arc $\delta$. Given a 2-dimensional framing $\phi$ of $\Delta$, we can amalgamate framings of $\delta$ and $R$ with $\phi$ to obtain a framing of $\widetilde{\Delta}$ that agrees with $\phi$ near $\boundary\widetilde{\Delta}=\boundary\Delta$.}
    \label{fig:tubetoframed}
\end{figure}


We can also perform ambient Dehn 2-surgeries along many disjoint disks simultaneously. That is, suppose that say $\pi_1(W-Y) = 1$, and we are given several disjoint curves $\gamma \subset Y$ along which we want to do an ambient Dehn surgery.  The existence of disjoint embedded disks meeting $Y$ in $\gamma$ is guaranteed by our condition that $\pi_1(W-Y) = 1$ (again since we are in a 5-manifold surfaces will generically not intersect). Again, we may use the disks to perform ambient Dehn 2-surgery to $Y$ along $\gamma$. The choice of framing of the disks determines the framing of this Dehn surgery. If we initially specify a choice of framing of each curve in $\gamma$, then there is no guarantee that the disks will have the desired 2-dimensional subbundles of their normal bundles such that the framing trivializations extend.

In the proof of Theorem \ref{simply-connected} in \cite{NS}, Sunukjian begins by taking some 3-manifold $Y$ in $X^4 \times I$ with boundary $\Sigma_0 \times\{0\}\sqcup -\Sigma_1\times\{1\}$.  The fundamental group conditions ensure the existence of disks bounding surgery curves in $Y$. 
The 
fact that all 3-manifolds are spin-cobordant is used to show that a framed link $\gamma$ that surgers $Y$ to $\Sigma_0 \times I$ can be chosen so that the specified surgery can be carried out ambiently, as described above.  In our proof we will be using Theorem \ref{simply-connected}, or rather the following result which is proved implicitly in \cite{NS}:

\begin{theorem}[{\cite[Theorem 6.1]{NS}}] \label{simply-connected - heart}
Let $Y^3 \subset W^5$ be a properly embedded submanifold, where $Y$ is compact, $W$ is not necessarily compact and is simply-connected, and $\pi_1(W-Y)$ is cyclic. Let $Y'$ be any other compact 3-manifold with $\partial Y' \cong \partial Y$.  Then there is an ambient Dehn surgery that can be performed on $Y$ in $W^5$ that yields an embedded 3-manifold diffeomorphic to $Y'$.  
\end{theorem}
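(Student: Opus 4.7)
The plan is to realize an abstract $4$-dimensional cobordism from $Y$ to $Y'$ as a sequence of ambient Dehn surgeries inside $W$, following the strategy of Sunukjian \cite{NS}. Since $\Omega_3^{SO}=0$, the closed oriented $3$-manifold $Y\cup_\partial(-Y')$ bounds a compact oriented $4$-manifold, which I view as a relative cobordism $Z$ from $Y$ to $Y'$ rel $\partial$. After standard handle manipulations (eliminating $0$- and $3$-handles by turning the cobordism upside down and cancelling), I may assume $Z$ is built from $Y\times I$ by attaching only $4$-dimensional $1$-handles and $2$-handles. It therefore suffices to realize each such handle attachment as an ambient Dehn surgery in $W$.

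For each $1$-handle, attached along a $0$-sphere $\{p,q\}\subset Y$, I would find an embedded arc $\alpha\subset W$ from $p$ to $q$ with $\mathring{\alpha}\cap Y=\emptyset$. Such an arc exists because $\pi_1(W)=1$ and a generic $1$-dimensional arc in a $5$-manifold meets the $3$-manifold $Y$ only at its chosen endpoints. Choosing the framing on $\alpha$ so that the induced ambient Dehn $1$-surgery recovers the abstract $1$-handle surgery on $Y$, and performing these surgeries in sequence, produces a new embedded $3$-manifold $Y_1\subset W$ matching the intermediate level of the handle decomposition of $Z$.

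Next, for each $2$-handle attached along a framed circle $\gamma$ in the current $3$-manifold, I would perform an ambient Dehn $2$-surgery. For this I need an embedded disk $\Delta\subset W$ with $\partial\Delta=\gamma$, $\mathring{\Delta}\cap Y_1=\emptyset$, and the prescribed framing of $\gamma$ being $\Delta$-admissible in the sense of Definition \ref{def:ambientDehn}. Because $\pi_1(W)=1$, an immersed disk bounding $\gamma$ in $W$ exists, and generically its interior meets $Y_1$ in isolated transverse points. The hypothesis that $\pi_1(W-Y_1)$ is cyclic ensures that these intersections are controlled by a single integer invariant (essentially linking with $Y_1$), so that oppositely signed intersections can be paired; the corresponding Whitney disks in the simply-connected $5$-manifold $W$ are automatically embedded for dimensional reasons, and the Whitney moves eliminate all interior intersections, yielding the required embedded disk $\Delta$.

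The hard part is arranging simultaneously that $\Delta$ be embedded with interior disjoint from $Y_1$ and that the framing on $\gamma$ dictated by the cobordism $Z$ be $\Delta$-admissible. The first is handled by the cyclicity of $\pi_1(W-Y_1)$ together with Whitney moves as above. For the second, Lemma \ref{lem:framing} says that exactly the even (or exactly the odd) integer translates of $\phi$ are $\Delta$-admissible; if our prescribed framing lies in the wrong coset, I would modify $\Delta$ either by tubing to a framed $2$-sphere via Lemma \ref{lem:tubetoframed} (when such a sphere is available in $W-Y_1$), or by altering the handle decomposition of $Z$ — for example by inserting a cancelling $1$/$2$-handle pair whose $2$-handle carries the needed framing correction, which is exactly the spin-cobordism-type maneuver appearing in Sunukjian's original argument. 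Maintaining the cyclicity of $\pi_1(W-Y_k)$ across all intermediate stages $Y_k$ is also a point that needs to be checked, but each $1$- and $2$-handle ambient surgery modifies this group in a controlled way and can be kept cyclic throughout.
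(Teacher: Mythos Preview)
The paper does not prove this theorem itself; it is extracted from the proof of \cite[Theorem~6.1]{NS}, and the paper offers only a two-sentence sketch immediately before the statement: the fundamental-group hypotheses supply the surgery disks, and the vanishing of $\Omega_3^{Spin}$ is used to choose the surgery link so that the required framings are ambiently realizable. Your outline follows exactly this strategy and is essentially correct.

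The one substantive difference is that you start from $\Omega_3^{SO}=0$ and only invoke a spin-cobordism-type correction at the very end to repair the framing parity. Sunukjian (and the paper's sketch) instead uses $\Omega_3^{Spin}=0$ from the outset, choosing the handle decomposition of $Z$ compatibly with spin structures so that the $2$-handle framings are automatically $\Delta$-admissible. This buys a cleaner argument with no need for auxiliary framed spheres (which need not exist in a general $W$) or post-hoc cancelling-pair insertions---the framing problem simply never arises. A minor technical point in your write-up: in your Whitney-move step to clear $\mathring{\Delta}\cap Y_1$, the Whitney disks are $2$-dimensional and will themselves generically meet $Y_1^3\subset W^5$ in isolated points, so the move is not as automatic as you suggest. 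The more direct route is to use the cyclicity of $\pi_1(W-Y_1)$ to show the pushoff of $\gamma$ is already nullhomotopic there, and then find $\Delta$ with interior in $W-Y_1$ by general position, bypassing Whitney moves entirely.
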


There are two 
situations in which 
we will use ambient Dehn 2-surgery. Together, these two moves will be used to eliminate double points of 3-manifolds immersed in 5-manifolds, assuming that the 3-manifolds have framed dual spheres.

\begin{move}[Remove an intersection]\label{move2}
Let $Y_1, Y_g$ be embedded 3-manifolds in $W^5$ with a single circle of self-intersection $\gamma$ so that $\gamma$ is an unknot in $Y_1$.  Let $\Delta$ be a disk bounded by $\gamma$ contained in $Y_1$. Let $B$ be the 2-dimensional subbundle of $N_W(\Delta)$ that is normal to $Y_1$. Then by performing ambient Dehn 2-surgery on $Y_g$ along $\Delta\times D^2$ in $B$, we obtain a new 3-manifold $Y'_g$ that is disjoint from $Y_1$. Thus, this move, ``eliminates double points."  See Figure \ref{fig:ambientdehn} (bottom row) or Figure 6 in \cite{NS}.
\end{move}


	\begin{figure}
	    \centering
	    \includegraphics[width=125mm]{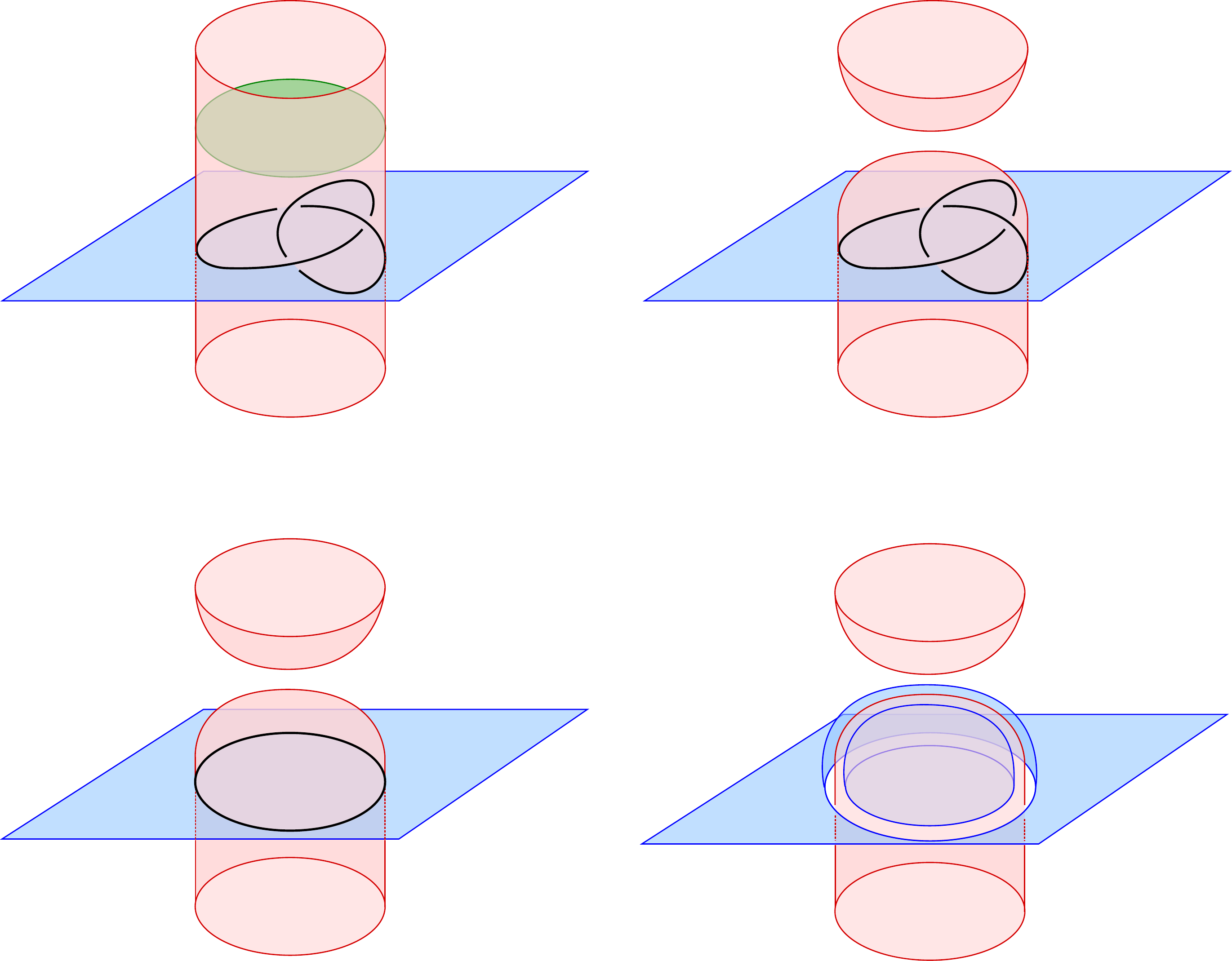}
	    \caption{{\bf{Top Left:}} Two 4-manifolds $Y_1$ and $Y_g$ intersect in a circle $C$. (If $Y_1=Y_g$, then this is a self-intersection.) {\bf{Top Right:}} We perform ambient Dehn 2-surgery on $Y_1$ at a solid torus parallel to $C$ (according to the surgery framing). Note that we require $Y_g$ to have a framed dual sphere in order to ensure that this framing is $\Delta$-admissible for some disk $\Delta$. Now the circle of intersection is an unknot in $Y_1$, but this changes the topology of $Y_1$. {\bf{Bottom Left:}} The circle $C$ bounds a disk $D$ in $Y_1$. {\bf{Bottom Right:}} We use $D$ perform ambient Dehn 2-surgery on $Y_g$. The chosen framing of $D$ is trivializable over the 2-dimensional subbundle of $N_W(D)$ that is normal to $Y_1$. Then after the surgery, we have removed the circle $C$ of intersection (without introducing any new intersections). This changes the topology of $Y_g$. }
	    \label{fig:ambientdehn}
	\end{figure}

	Move \ref{move2} may seem to have overly strong hypotheses on when it can be performed: there is generally no reason to suspect that $Y_1\cap Y_g$ has any unlinked, unknotted components. However, we now consider another move that can simply the intersection link between $Y_1$ and $Y_g$.

\begin{move}[Unknot a self-intersection]\label{move1}

Take $Y_1$, $Y_g$ to be 3-manifolds immersed in $W$ and assume $Y_g$ has a framed dual sphere $G$ that does not intersect $Y_1$. 

Note that $Y_1$ and $Y_g$ intersect in a 1-manifold. 
For simplicity, we will assume that this 1-manifold is just a single circle $C$.  Fix a disk $D$ in $W$ with $D\cap Y_1=\boundary D=C$ and a parallel copy $\Delta$ of $D$ that meets $Y_1$ at some parallel curve $\gamma$ to $C$. Since $C$ and $\gamma$ are parallel, there is a natural choice of framing $\phi$ of $N_Y(\gamma)$, so that $C$ is isotopic in $\nu(\gamma)\setminus\gamma$ to a pushoff of $\gamma$ according to $\phi$.

Suppose $\phi$ is $\Delta$-admissible. Then by surgering $Y_1$ along $\Delta$ according to a framing extending $\phi$, we obtain a 3-manifold $Y_1$ in which $C$ is unknotted. On the other hand, suppose that $\phi$ is {\emph{not}} $\Delta$-admissible. Let $\Delta'$ be the result of isotoping $\Delta$ in a neighborhood of $C$ to achieve a crossing change of $C$ and $\gamma$, so that $\Delta'$ intersects $Y_g$ at one point in its interior. Then tube $\Delta'$ to $G$ along an arc in $Y_g$ ending at $\mathring{\Delta'}\cap Y_g$ to obtain a disk $\widetilde{\Delta}$ whose interior is disjoint from $Y_g$. By Lemma \ref{lem:tubetoframed} (using the fact that $G$ is framed), $\phi$ is also not $\widetilde{\Delta}$-admissible. However, while $\widetilde{\gamma}:=\boundary \widetilde{\Delta}$ is parallel to $C$, the framing $\widetilde{\phi}$ that $C$ induces on $\widetilde{\gamma}$ is homotopic to $\pm1\cdot\phi$. By Lemma \ref{lem:framing}, $\widetilde{\phi}$ is $\widetilde{\Delta}$-admissible.  Then by surgering $Y_1$ along $\widetilde{\Delta}$ according to a framing extending $\widetilde{\phi}$, we obtain a 3-manifold $Y_1'$ in which $C$ is unknotted.

In Figure \ref{fig:unknotmovesketch}, we give a schematic of the intuition for obtaining $\widetilde{\Delta}$. In Figure \ref{fig:tildegamma}, we draw a more accurate picture of $\widetilde{\Delta}$.  By using this move we can, ``unknot the intersections" - see Figure \ref{fig:ambientdehn} (top row).  This is also pictured in Figure 5 in \cite{NS}.
\end{move}
	
Note that we used the framing on $G$ to unknot self-intersections of $Y$, as we needed to achieve surgery with a specific framing on a knot in $Y$. However, we do not need $G$ to be framed in order to remove intersections that are already unknotted, as then there is a natural disk with natural choice of framing.	

	\begin{figure}
	    \centering
	    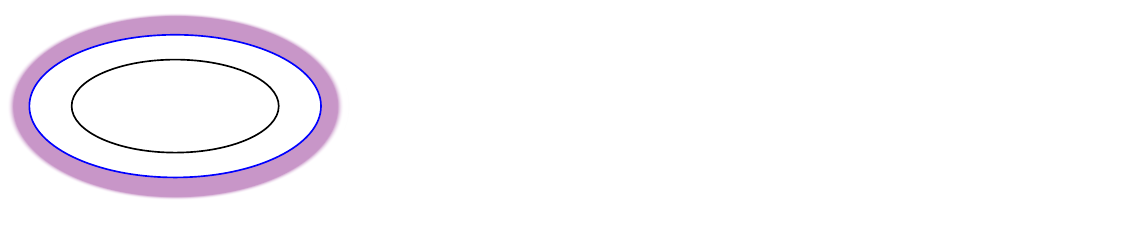
	    \caption{To obtain $\Delta'$ from $\Delta$, we isotope $\gamma=\boundary\Delta$ once through $C$. This introduces an intersection point between $\Delta'$ and $Y_g$. By tubing $\Delta'$ to a parallel copy of $G$ at this intersection point, we obtain a disk $\widetilde{\Delta}$ that does not intersect $Y_g$. Moreover, since $G$ is framed, if a framing $\phi$ of $N_Y(\gamma)$ is not $\delta$-admissible, then it is also not $\widetilde{\Delta}$ admissible (where $N_Y(\gamma)$ and $N_Y(\widetilde{\gamma})$ are identified by the pictured isotopy).}
	    \label{fig:unknotmovesketch}
	\end{figure}

	\begin{figure}
	    \centering
	    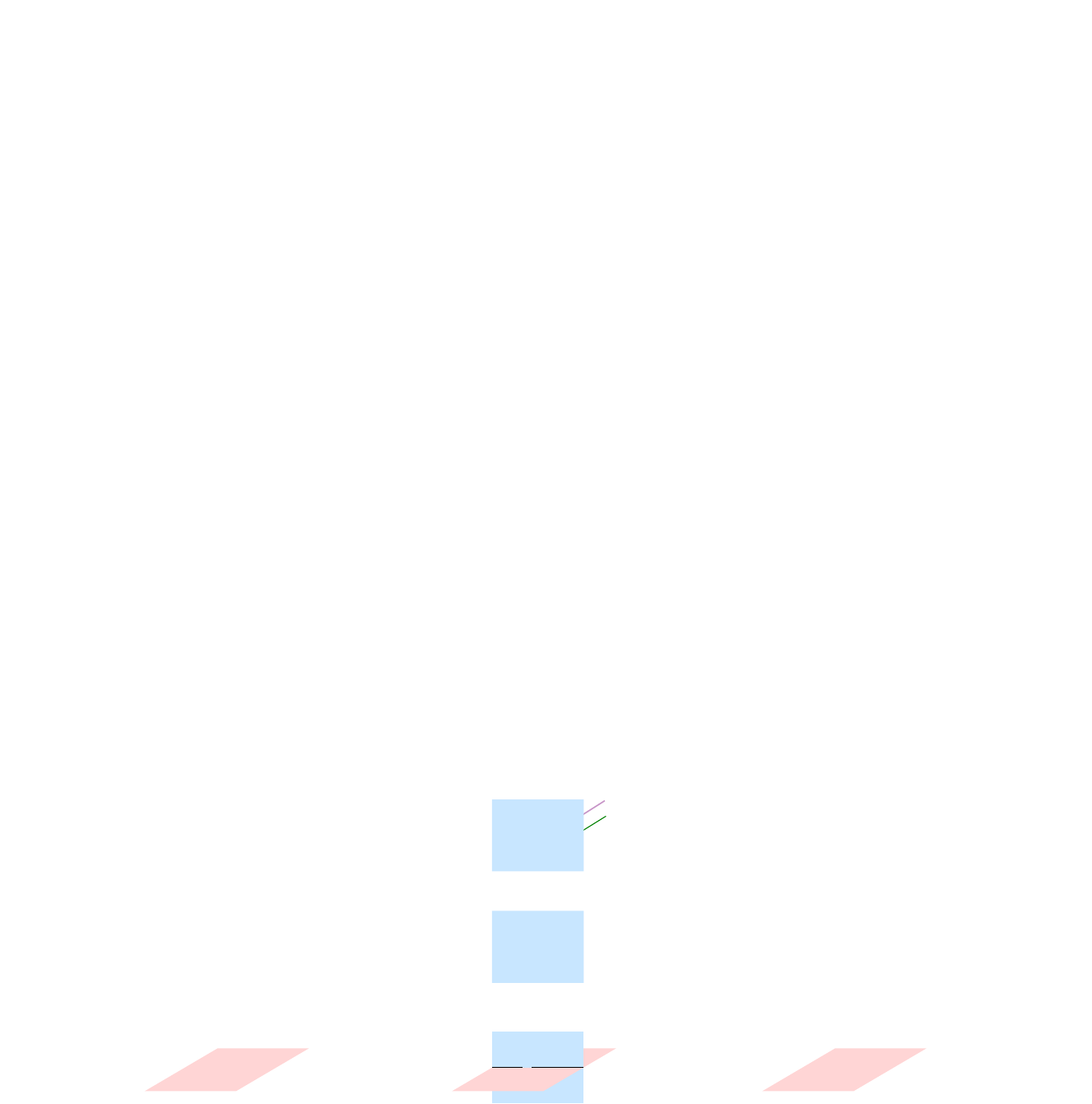
	    \caption{A more realistic depiction of the move described in Figure \ref{fig:unknotmovesketch}, which is part of Move \ref{move1}. Each picture is 5-dimensional, consisting of a two-dimensional array of 3-dimensional drawings. {\bf{Top:}} The disk $\Delta$. {\bf{Middle:}} The disk $\Delta'$. {\bf{Bottom:}} The disk $\widetilde{\Delta}$. }
	    \label{fig:tildegamma}
	\end{figure}

\begin{proof}[Proof of Theorem \ref{maintheorem} when $F_0$ and $F_1$ are 2-spheres]

Let $S_0:=F_0$ and $S_1:=F_1$, and assume that each $F_i$ is a 2-sphere. (We rename the surfaces to stick to the convention that $S_i$ is a 2-sphere while $F_i$ is a surface of arbitrary genus.)

	Let $\widetilde{X}$ denote the universal cover of $X$. Choose a preferred basepoint above the basepoint of $X$.  
	By Lemma \ref{lem:nobasegen}, we may take $S_0$ and $S_1$ to be based-homotopic with no loss of generality. (We implicitly used this fact to state Theorem \ref{maintheorem} and make sense of whether $\fq(S_0,S_1)$ vanishes or not.)

	Fix a based homotopy from $S_0$ to $S_1$ and look at its track $H : S^2 \times I \to X \times I$.  Let $H_1$ denote the lift of $H$ to $\widetilde{X} \times I$ based at the basepoint of $\widetilde{X}$. 
	Let $H_g$ denote the $g$-translate $gH_1$ of $H_1$, where $g\in\pi_1(X)$ and $\pi_1(X)$ acts on $\widetilde{X}$ as usual.
	
	The meridian of $S_0$ in $X \times \{0\}$ lifts to a meridian of $H_1$. Seifert-van Kampen shows that $\pi_1( \widetilde{X} \times I - H_1)$ is normally generated by this meridian.  Because the meridian of $S_0$ is null homotopic in $X - S_0$, the lift of the meridian is null-homotopic in $\widetilde{X} \times I - H_1$ and therefore $\pi_1 ( \widetilde{X} \times I - H_1) = 1$.  

	Additionally, $\pi_1(\widetilde{X} \times I - \cup_{g} H_g) = 1$.  To see this, note that the group is normally generated by meridians of each of the $H_g$, and each of these meridians can be taken as a lift of the meridian of $S_0$.  Additionally, the nullhomotopy of the meridian of $S_0$ in $X - S_0$ 
	lifts to $\widetilde{X}-\widetilde{S}_0\subset \widetilde{X}\times I-\cup_g Y_g$. 

	If $\fq(S_0,S_1)\neq 0$, then we know immediately that $S_0$ and $S_1$ are not concordant,  since, by \cite{ST}, a concordance could be used to compute $\fq(S_0,S_1)=0$. We now assume $\fq(S_0,S_1)=0$. We will use the interpretations of $\mu_3$ and $\fq$ that appear in \S\ref{sec:def4} (Definitions \ref{mudef4} and \ref{fqdef4}), namely
	the 5-dimensional definitions through covering spaces. 

	
	Fix $\fq(H)\in\F_2 T_X$, where the coefficient of $g$ in $\fq(H)$ is the number of components of $H_1\cap H_g$ fixed (setwise) by the action of $g$. Since $\fq(S_0,S_1)=0$, 
	there exists $f \in \pi_3(X)$ so that $\mu_3(f) = \fq(H)$.  We alter $H$ near $X\times 0$ by surgering $H$ along $f$ near the basepoint. That is, delete a small ball $B$ in $H\cap f(S^3)$ from $H$ and reglue $\overline{f(S^3)-B}$. This yields a new homotopy $H'$ between $S_0$ and $S_1$ with the property that $\fq(H')=\fq(H)+\mu_3(f)=2\fq(H)=0$. Set $H:=H'$. In words, for every $g\in \pi_1(X)$, the action of $g$ fixes an even number of components of $H_1\cap H_g$ setwise.

    \begin{proposition}\label{prop:makezero}
    We can perform equivariant ambient 1-surgeries to $\cup_{g} H_g$ to obtain a collection of immersed cobordisms $\{Y_g\}$ with $\boundary Y_i=\boundary H_i$ with the property that for each $g\in T_X$, $g$ fixes no components of $Y_1\cap Y_g$ setwise.
    \end{proposition}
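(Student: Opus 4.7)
The plan is to eliminate the $g$-fixed components of $H_1 \cap H_g$ in pairs, using equivariant ambient Dehn 1-surgery along generic arcs in $\widetilde{X} \times I$. The surgery modification of $H$ along $f \in \pi_3(X)$ performed immediately before the proposition ensures that $\fq(H) = 0$ as an element of $\F_2 T_X$ (not merely modulo $\mu_3(\pi_3(X))$); equivalently, for every $g \in T_X$ the number $c_g$ of components of $H_1 \cap H_g$ fixed setwise by $g$ is even. For each such $g$, I pair up the $g$-fixed components arbitrarily into $c_g/2$ unordered pairs $\{C, C'\}$.

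For each pair $\{C, C'\}$, I choose basepoints $p \in C$ and $p' \in C'$ and an arc $\alpha \subset \widetilde{X} \times I$ from $p$ to $p'$ whose interior is disjoint from $\bigcup_{h \in \pi_1(X)} H_h$. Such an $\alpha$ is generic because the arc is 1-dimensional while $\bigcup_h H_h$ is 3-dimensional in the 5-dimensional ambient space, giving total dimension $4 < 5$. I also arrange generically that the translates $\{h\alpha : h \in \pi_1(X)\}$ are pairwise disjoint (in particular $\alpha \cap g\alpha = \emptyset$). I then perform ambient Dehn 1-surgery on $H_1$ along $\alpha$, which by $\pi_1(X)$-equivariance must be done simultaneously on $H_h$ along $h\alpha$ for every $h$; the pairwise disjointness ensures the individual surgeries do not conflict.

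The local effect on $H_1 \cap H_g$ is as follows. The surgery on $H_1$ along $\alpha$ removes 3-balls in $H_1$ around $p$ and $p'$, cutting small arcs out of $C$ and $C'$, and attaches a tube whose boundary 2-sphere contributes two new intersection arcs with $H_g$ running along $\alpha$; the equivariant surgery on $H_g$ along $g\alpha$ likewise removes 3-balls around $gp, gp'$ (cutting further arcs of $C, C'$) and contributes two new intersection arcs along $g\alpha$. The two circles $C, C'$ are thereby replaced by a 1-manifold built from the four surviving arcs of $C \cup C'$ reconnected by the four new tube arcs. The $g$-action swaps the arcs of $C$ in pairs (and likewise the arcs of $C'$) and swaps the tube along $\alpha$ with the tube along $g\alpha$. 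By appropriate choice of the framing of the Dehn 1-surgery (using the two framings distinguished by the generator of $\pi_1(\mathrm{SO}(3)) \cong \Z/2$, analogous to Lemma \ref{lem:framing}), I can arrange the resulting local 1-manifold to consist of two components swapped by $g$ rather than components fixed by $g$, thus reducing $c_g$ by $2$ without creating new $g$-fixed components. Iterating over all pairs and all $g \in T_X$ produces the desired family $\{Y_g\}$; since every surgery takes place in the interior of $\widetilde{X} \times I$, we have $\partial Y_g = \partial H_g$ throughout.

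The main technical obstacle is the framing analysis in the previous paragraph: given the $g$-action on the four surviving arcs and the four new tube arcs, one must verify that among the two available framings of the ambient Dehn 1-surgery at least one yields two components swapped by $g$. The parity condition $c_g \equiv 0 \pmod 2$, inherited from $\fq(H) = 0$, is precisely what makes this rearrangement achievable; without it, a lone unpaired $g$-fixed component could not be eliminated by any equivariant 1-surgery, since any such move would change the $g$-fixed count by an even number.
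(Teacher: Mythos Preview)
Your approach has a genuine gap in the placement of the arc $\alpha$. You take $\alpha$ to have interior disjoint from every $H_h$ and then surger $H_1$ along it; but with that choice the tube $I\times S^2$ you attach to $H_1$ lives in a small neighborhood of $\alpha$, and since the interior of $\alpha$ is far from $H_g$ the tube meets $H_g$ only near the endpoints $p\in C$ and $p'\in C'$. In local coordinates at $p$ the boundary $2$-sphere of the removed ball meets $H_g$ in exactly two points, and as the tube leaves the vicinity of $H_g$ along $\alpha$ these two points are joined by a single short arc of $(\text{tube})\cap H_g$. The net effect on $Y_1\cap H_g$ near $p$ is therefore to excise a small subarc of $C$ and replace it by another arc with the same endpoints: $C$ is still one circle, and likewise $C'$. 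The equivariant surgery on $H_g$ along $g\alpha$ has the same non-effect at $gp,gp'$, so the component structure of $Y_1\cap Y_g$ is unchanged and $C,C'$ remain $g$-fixed. No framing of the $1$-handle can rescue this: the framing only rotates the $S^2$ fibers inside the $4$-dimensional normal bundle of $\alpha$, and cannot make a fiber of radius $\epsilon$ reach $H_g$ once $\alpha$ is farther than $\epsilon$ from $H_g$. Your deferred ``framing analysis'' therefore cannot be completed.

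The paper avoids this by placing the arc \emph{inside one of the sheets}: take $\alpha\subset H_1$ joining the two $g$-fixed circles (with interior off all double curves---possible since $H_1$ is connected and the double locus is $1$-dimensional), and perform the $1$-surgery on the \emph{other} sheet $H_g$ along $\alpha$; equivariantly, surger $H_{hg}$ along $h\alpha\subset H_h$. Now the tube attached to $H_g$ runs along $\alpha\subset H_1$ for its entire length, so it meets $H_1$ in two arcs parallel to $\alpha$ that genuinely band $C$ to $C'$. Together with the $g$-translate (surgery on $H_1$ along $g\alpha\subset H_g$, banding at $gp,gp'$), the two $g$-fixed circles become two circles interchanged by $g$; this is read off directly from the banding picture and needs no framing argument.
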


	\begin{proof}
	Refer to Figure \ref{fig:tubing}.  By assumption, 
	there are an even number of circles $H_1 \cap H_g$ that are fixed by $g$, for each $g \in T_X$.  Let $\gamma_1$ and $\gamma_2$ in $H_1\cap H_g$ denote two such circles and let $\alpha$ denote an arc in $H_1$ connecting $\gamma_1$ and $\gamma_2$. Take the interior of $\alpha$ to avoid self-intersections of $H_1$ and intersections of $H_1$ with $H_{g'}$ for any $g'\in\pi_1(X)$. (This is possible because $H_1$ is connected and 3-dimensional while these intersections are 1-dimensional.)
	
Now for each $h \in \pi_1(X)$, we have an arc $h \cdot \alpha \subset H_h$ from $h \cdot \gamma_1$ to $h \cdot \gamma_2$, with $h \cdot \gamma_1, h \cdot \gamma_2 \in \pi_0(H_h \cap H_{hg})$.  Use all of these arcs $h \cdot \alpha$ to equivariantly perform ambient Dehn 1-surgeries to $H_{hg}$.  Let $Y_s$ denote the manifold obtained after performing these 1-surgeries to $H_s$.  
The effect of this is to replace the circles of intersection $\gamma_1, \gamma_2$, with new circles of intersection $\gamma'_1, \gamma'_2\in \pi_0(Y_1 \cap Y_g)$ where now $g \cdot \gamma'_1 = \gamma'_2$. (Translations of $\gamma_1$ and $\gamma_2$ are similarly altered, but these circles are not contained in $Y_1$.) Thus, in $Y_1 \cap Y_g$ there are now two fewer circles fixed by $g$. 
Since by assumption there are an even number of circles in $Y_1 \cap Y_g$ fixed by $g$, by repeating this process we can eliminate all of these circles. We take $\{Y_g\}$ to be the resulting cobordisms after performing all equivariant 1-surgeries.    

	\begin{figure}
	  \labellist
\small\hair 2pt
 \pinlabel $\textcolor{blue}{H_g}$ at 50 300
 \pinlabel $\textcolor{red}{H_1}$ at 290 700
 \pinlabel $\textcolor{blue}{Y_g}$ at 1000 300
 \pinlabel $\textcolor{red}{Y_1}$ at 1210 700
 \pinlabel $H_1\cap H_g$ at 620 -220
 \pinlabel $Y_1\cap Y_g$ at 1170 -90
\endlabellist
	    \centering
	    \includegraphics[width=135mm]{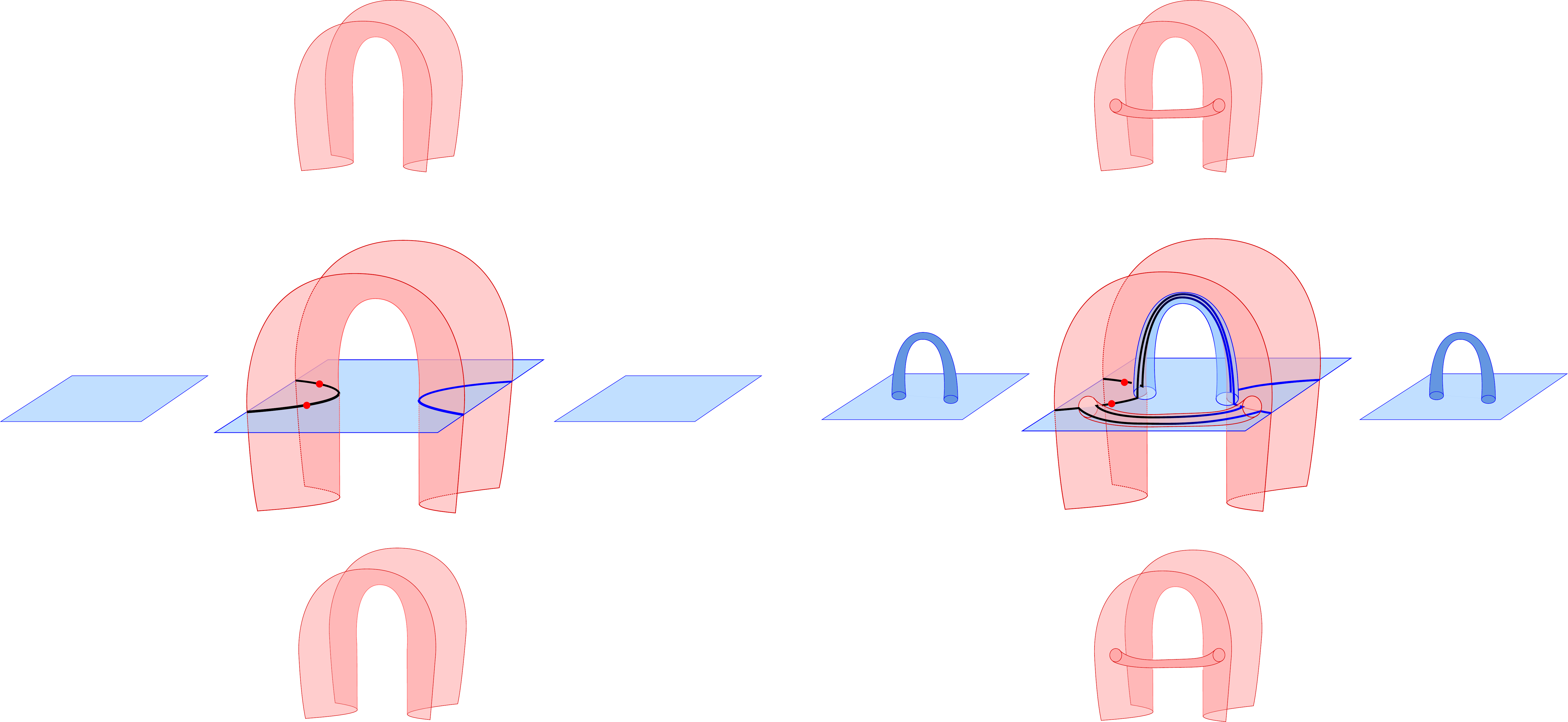}\\
	    \vspace{9mm}
	    \includegraphics[width=80mm]{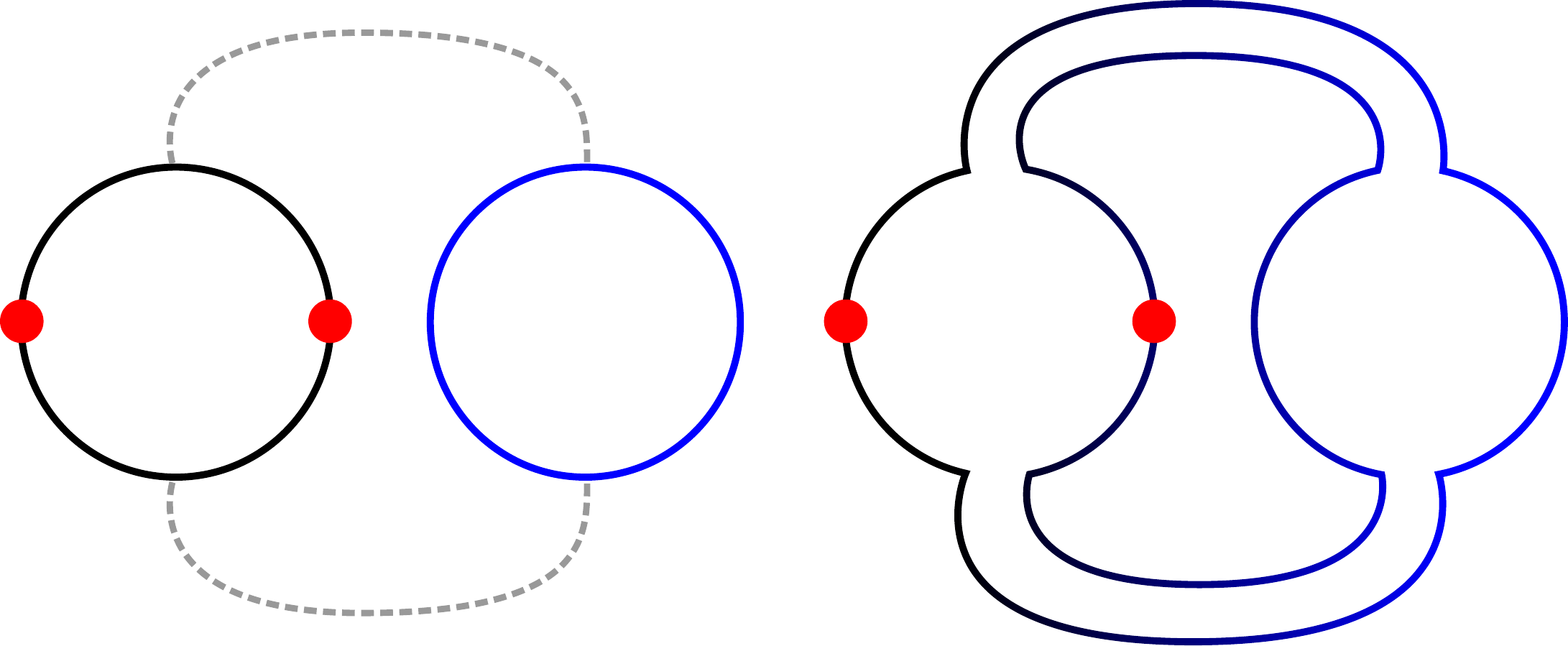}
	    \caption{{\bf{Top row, left:}} A schematic of $H_1$ and $H_g$, featuring arcs of two circles of $H_1\cap H_g$ fixed componentwise by the action of $g$. {\bf{Top row, right:}} We equivariantly perform ambient 1-surgeries to change these circles of intersection. The resulting manifolds are $Y_1$ and $Y_g$. {\bf{Bottom row:}} A combinatorial description of the top row move; we indicate two points permuted by the action of $g$. Here we see that after the surgery, the two circles are permuted by $g$.}
	    \label{fig:tubing}
	\end{figure}

	
	Note that since $\{Y_g\}$ is obtained from $\{H_g\}$ by ambient 1-surgeries, $\pi_1(\widetilde{X}\times I-\cup_{g} Y_g)=1$.
	\end{proof}

	\begin{proposition}\label{prop:makedisjoint}
    We can perform equivariant ambient Dehn 2-surgeries to $\{Y_g\}$ to obtain a disjoint collection of embedded cobordisms $\{M_g\}$ between lifts of $S_0$ to lifts of $S_1$ with the property that $\pi_1(X-\cup_{g}M_g)=1$.
    \end{proposition}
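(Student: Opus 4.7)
The plan is to apply Moves \ref{move1} and \ref{move2} equivariantly, orbit-by-orbit, to eliminate every pairwise intersection among the $Y_g$. First I would check that after Proposition \ref{prop:makezero}, every component $C$ of $Y_1 \cap Y_g$ (for $g \neq 1$) has trivial stabilizer in $\pi_1(X)$: if $h \cdot C = C$ then $h$ permutes the unordered pair $\{Y_1, Y_g\}$ (since $h \cdot Y_k = Y_{hk}$), so either $h = 1$ or $h = g$ with $g^2 = 1$; the latter possibility is ruled out precisely by Proposition \ref{prop:makezero}. Hence the $\pi_1(X)$-orbit of $C$ is a pairwise-disjoint union $\{h \cdot C : h \in \pi_1(X)\}$ with $h \cdot C \subset Y_h \cap Y_{hg}$.

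Next I would set up equivariant dual spheres. Let $\widetilde{G}_1$ be the lift of $G$ through the basepoint of $\widetilde{X}$, pushed into $\widetilde{X} \times (0,1)$ so that it serves as a framed immersed dual for $Y_1$, and set $\widetilde{G}_h := h \cdot \widetilde{G}_1$. Because $\widetilde{X} \times I$ is five-dimensional and $\pi_1(\widetilde{X} \times I - \cup_k Y_k) = 1$, each $\widetilde{G}_h$ can be perturbed equivariantly to be disjoint from every $Y_k$ with $k \neq h$. Now, processing one orbit at a time: for a representative $C \subset Y_1 \cap Y_g$, I would first apply Move \ref{move1} to $Y_1$ along $C$ using the framed dual $\widetilde{G}_g$, and simultaneously apply the $h$-translate on $Y_h$ along $h \cdot C$ using $\widetilde{G}_{hg}$, for every $h \in \pi_1(X)$. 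Triviality of the stabilizer of $C$ ensures these translated moves take place in pairwise-disjoint neighborhoods, so equivariance is preserved. Once $C$ is unknotted in $Y_1$, I would apply Move \ref{move2} and its equivariant translates to eliminate the orbit of $C$ entirely, using the disk that $C$ now bounds in $Y_1$; this move is local and introduces no new intersections. Iterating over all orbits produces the desired pairwise-disjoint equivariant family $\{M_g\}$ of embedded cobordisms between lifts of $S_0$ and lifts of $S_1$.

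For the fundamental group condition, I would repeat the argument given for the $H_g$: by Seifert--van Kampen, $\pi_1(\widetilde{X} \times I - \cup_k M_k)$ is normally generated by meridians of the $M_g$. Each such meridian is conjugate in the complement to a meridian of $\partial M_g$ near the boundary component in $\widetilde{X} \times \{0\}$, hence to a lift of the meridian of $S_0$; this lifted meridian is null-homotopic in $\widetilde{X} - \widetilde{S}_0$ via the disk obtained from $\widetilde{G}_g$ by removing its intersection point with $\partial M_g$, and that disk lies entirely inside $\widetilde{X} \times \{0\} \subset \widetilde{X} \times I - \cup_k M_k$. Thus every normal generator is null-homotopic, and the complement is simply connected.

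The main obstacle I anticipate is carrying out Move \ref{move1} equivariantly with compatible framings; this is exactly the point where the framedness of $G$ is indispensable. Lemmas \ref{lem:framing} and \ref{lem:tubetoframed} guarantee that the tube-to-dual-sphere construction of $\widetilde{\Delta}$ produces a $\widetilde{\Delta}$-admissible framing on the parallel curve to $C$, ensuring the ambient Dehn 2-surgery is valid. A secondary verification is that surgeries performed along one orbit do not revive intersections in previously processed orbits; this reduces to checking that each move is supported in a small neighborhood of the orbit being processed, which holds by the local description of the moves together with the equivariant perturbations of the $\widetilde{G}_h$.
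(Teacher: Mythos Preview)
Your proposal is essentially the paper's argument, organized slightly more cleanly via the stabilizer computation (the paper instead splits into the cases $g^2\neq 1$ and $g\in T_X$, which amounts to the same thing). One omission: you only treat circles in $Y_1\cap Y_g$ for $g\neq 1$, but after Proposition \ref{prop:makezero} each $Y_g$ is still only \emph{immersed}---the track $H$ generically has self-intersection circles with associated group element $1$, and the ambient $1$-surgeries of Proposition \ref{prop:makezero} do not remove these. The paper eliminates the self-intersections of $Y_1$ first, by exactly the same Move~\ref{move1}/Move~\ref{move2} routine (equivariantly translated), before turning to the pairwise intersections. Your orbit argument handles this case too once you allow $g=1$: a self-intersection circle of $Y_1$ has trivial $\pi_1(X)$-stabilizer for the same reason, so the equivariant surgeries go through. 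With that addition your argument matches the paper's.
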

    

In Proposition \ref{prop:makedisjoint}, the manifolds $M_g$ are not necessarily products. 
We will invoke the proof of Theorem \ref{simply-connected} in \cite{NS} to conclude that we can further surger the results equivariantly to obtain disjoint $\pi_1(X)$-translates of $S^2 \times I$, as desired.  This uses the moves outlined at the beginning of the section. 

\begin{proof}[Proof of Proposition \ref{prop:makedisjoint}]
We follow the strategy of \cite{NS}. 

Note that the dual sphere $G$ of $S_0$ lifts to dual spheres of each lift of $S_0$ in $\widetilde{X}$. We can push these dual spheres into $\widetilde{X}\times I$ to obtain dual spheres for each $Y_g$, with the dual for $Y_g$ disjoint from $Y_h$ for $g\neq h$. This allows us to Perform \ref{move1}.

First, we deal with self-intersections of $Y_1$. We perform Move \ref{move1} so that all self-intersections of $Y_1$ are unknotted in $Y_1$, at the cost of changing the topology of $Y_1$. For each ambient Dehn 2-surgery on a disk $\Delta$ with boundary in $Y_1$,
we also surger each translate of $\Delta$ under the action of $\pi_1(X)$. (Generically, these framed disks are all disjoint.) That is, we perform equivariant ambient Dehn 2-surgery on each $Y_g$ using $g\cdot \Delta$. 
Abusing notation, we call the resulting manifold $Y_g$. 

Now $\gamma$ is unknotted in $Y_1$ and unlinked with every other component of self-intersection of $Y_1$. (See Figure \ref{fig:ambientdehn}.) Repeat this procedure for every circle of self-intersection of $Y_1$ so that now the self-intersections of $Y_1$ form an unlink $L$ in $Y_1$.


Choose a collection of framed disks in $Y_1$ with boundary $L$. Perform equivariant ambient Dehn 2-surgery along the translates of each disk in this collection (this is Move \ref{move2}), so that now $Y_g$ is embedded for every $g\in\pi_1(X)$.

We similarly perform equivariant ambient Dehn 2-surgeries on $\{Y_g\}$ to pairwise remove intersections as follows. 
Given $g \in \pi_1(X)$ with $g^2 \neq 1$, then we again 
equivariantly perform Move \ref{move1} 
at each curve in $Y_1 \cap Y_g$ 
so that $Y_1 \cap Y_g$ is now an unlink in $Y_1$. (Similarly, $Y_h\cap Y_{hg}$ is an unlink in $Y_h$ for all $h \in \pi_1(X)$.)  We can then perform Move \ref{move2} 
on $Y_g$ once for each curve in $Y_1\cap Y_g$ (and equivariantly on all other translates) so that $Y_1$ becomes disjoint from $Y_g$. (Similarly, $Y_h$ is disjoint from $Y_{hg}$ for all $h \in \pi_1(X)$.) 

If $g \in T_X$, then the action of $g$ permutes the circles in $Y_1 \cap Y_g$ in pairs (here we use Proposition \ref{prop:makezero}). 
Fix a circle $\gamma$ in $Y_1\cap Y_g$. Then $g^2\cdot\gamma=\gamma\neq g\cdot\gamma$. 
Pick one curve from each such pair $\{\gamma,g\cdot\gamma\}$, giving a set of curves $\{ \gamma_1,...,\gamma_k \}$ with $Y_1 \cap Y_g$ equal to the disjoint union of $\{ \gamma_1,...,\gamma_k \}$ and $\{ g \cdot \gamma_1,...,g \cdot \gamma_k \}$.  Equivariantly perform Move \ref{move1} on $Y_1$ 
at $\gamma_1,\ldots,\gamma_k$, so that now $Y_h\cap Y_{hg}$ is an unlink in $Y_h$ for all $h \in \pi_1(X)$. 
Then equivariantly perform Move \ref{move2} on $Y_g$ at $\gamma_1,\ldots,\gamma_k$ so that $Y_h\cap Y_{hg}=\emptyset$ for all $h\in\pi_1(X)$.  Repeat this for every $g\in T_X$.

Let $\{M_g\}$ denote these resulting mutually embedded manifolds. Since all surgeries were done equivariantly, $M_g=g\cdot M_1$. We conclude $M_g$ is embedded and $M_g\cap M_h=\emptyset$ for $g\neq h\in\pi_1(X)$.


All of the ambient Dehn 2-surgeries in this proposition took place in the interior of $\widetilde{X}\times I$, and thus far from the lifts of $S_0$ and the immersed dual of $S_0$. We thus conclude that a meridian of $M_h$ is nullhomotopic in $\widetilde{X}\times I\setminus\cup_{g}M_g$, so $\pi_1(X-\cup_{g}M_g)=1$.

\end{proof}
	
	Now by applying Theorem \ref{simply-connected - heart}, we can equivariantly ambiently Dehn 2-surger the $\{ M_g \}$ to be disjoint concordances. (That is, Theorem \ref{simply-connected - heart} gives instructions on how to ambiently Dehn 2-surger $M_1$ to obtain an embedded $S^2\times I$. When performing each of these ambient Dehn 2-surgeries, we simultaneously perform all translates, which are generically along disjoint disks.) Projecting $M_1$ to $X\times I$ thus yields a concordance from $S_0$ to $S_1$, completing the proof of Theorem \ref{maintheorem} in the case that $F_0=S_0$ and $F_1=S_1$ are 2-spheres. 
	
\end{proof}

\section{Higher genus surfaces}\label{sec:genus}

    In this section, we will prove Theorem \ref{maintheorem} when the surfaces are of arbitrary genus and also define $\fq(F_0,F_1)$ when $F_0,F_1$ are $\pi_1$-negligible based-homotopic positive-genus surfaces. 

	

	Let $F$ be a closed orientable surface and let $F_0,F_1 : F \hookrightarrow X$ be two based-homotopic embeddings of $F$. Further assume that $F_0$ and $F_1$ are $\pi_1$-negligible. Then we can define $\mu_3(H)$ for any generic map $H : (F,\ast) \times I \to (X,\ast) \times \mathbb{R} \times I$ with $H(F\times0)=F_0\times\{0\}\times\{0\}$ and $H(F\times 1)=-F_1\times\{0\}\times\{1\}$ as in Definition \ref{mudef1} by again assigning an element of $\pi_1(X)$ to each self-intersection of $H$. The condition that $\pi_1(H)$ maps trivially to $\pi_1(X)$ ensures these group elements will be well-defined and be contained in $T_X\cup\{1\}$; we sum these elements to obtain 
	 $\mu_3(H) \in \mathbb{F}_2T_X$. 
	 
	 To define $\fq(F_0,F_1)$ as we did when $F_0, F_1$ were 2-spheres, we need only to see that the choice of $H$ does not affect $\mu_3(H)$ (up to $\mu_3(\pi_3(X)))$. We must reprove a lemma analogous to Lemma 4.4 in \cite{ST}.  

\begin{lemma}\label{lem:compress}
	Let $H : F \times I \to X \times \mathbb{R} \times I$ be a generic map where the two ends are contained in $X \times \{0\} \times \{0\}$ and $X \times \{0\} \times \{1\}$ (respectively) and are identical (as submanifolds of $X\times\R$, with opposite induced orientations) and are $\pi_1$-negligible.  Then $\mu_3(H) \in \mu_3(\pi_3(X))$.  
\end{lemma}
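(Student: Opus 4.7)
My approach is to close $H$ up along its matching ends to obtain a map from a closed 3-manifold, then use $\pi_1$-negligibility to surger this closed 3-manifold down to $S^3$ while tracking how $\mu_3$ changes, reducing to the sphere case of Schneiderman--Teichner \cite{ST}. For a $2$-sphere domain the corresponding closure is already $S^3 = \Sigma S^2$ and no further reduction is needed; the higher-genus case requires an extra compression step.

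First, by a preliminary modification of $H$ supported near a basepoint (not affecting $\mu_3$), I would arrange that $H$ is based, meaning $H(x_0 \times I)$ is the constant path at a basepoint $x_0 \in F$. The two ends of $H$ then coincide as oriented submanifolds of $X$, so they are identified by a diffeomorphism $\phi : F \to F$ fixing the basepoint. Gluing via $\phi$ produces a map $\widehat H : M \to W$, where $M$ is the mapping torus of $\phi$ and $W := X \times \R \times S^1$ (obtained by identifying $I=0$ and $I=1$ in the codomain). Because the gluing is along embedded boundary, no new self-intersections are created and the $\pi_1(X)$-labels on existing double points are unchanged, giving $\mu_3(\widehat H) = \mu_3(H) \in \F_2 T_X$. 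Moreover the based condition makes the time loop $\tau \in \pi_1(M)$ map trivially to $\pi_1(X)$, so together with $\pi_1$-negligibility of $F$ the composite $M \to W \to X$ is $\pi_1$-negligible.

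The core step is to modify $(M, \widehat H)$ by a sequence of ambient $2$-surgeries in $W$ so that $M$ becomes $S^3$. For each generator of $\pi_1(M)$, $\pi_1$-negligibility produces an immersed disk in $X$ bounding its image; pushing this disk into $W$ along the $\R$-direction gives a surgery disk with boundary on $M$ and interior generically disjoint from $M$, so ambient $2$-surgery along it kills the chosen generator. After $2g+1$ such surgeries $M$ is simply-connected, and further ambient $2$-surgery (via a Lickorish surgery description) reduces $M$ to $S^3$, so $\widehat H$ becomes a representative of some $f \in \pi_3(X)$.

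The main obstacle is controlling the change in $\mu_3$ at each surgery. Since the surgery disks are only immersed in $W$, their self-intersections produce new self-intersection points of the surgered $3$-manifold. I would show that the total contribution of these new double points to $\mu_3$ is $\mu_3(\sigma)$ for an explicit $\sigma \in \pi_3(X)$ constructed from the surgery disk --- concretely, by doubling the surgery disk along its boundary using the tube structure of $F$ near the chosen generator and then spinning the resulting immersed sphere over a companion circle to obtain a map $S^3 \to X$. Once this is in place, telescoping over the sequence of surgeries yields $\mu_3(H) = \mu_3(\widehat H) \equiv \mu_3(f) \pmod{\mu_3(\pi_3(X))}$, and hence $\mu_3(H) \in \mu_3(\pi_3(X))$, as desired.
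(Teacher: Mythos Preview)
Your approach is genuinely different from the paper's, but it has a real gap at the closing-up step. Once you glue the codomain to form $W = X \times \R \times S^1$, the time loop $\tau \in \pi_1(M)$ no longer maps trivially into $\pi_1(W) \cong \pi_1(X) \times \Z$: it hits the generator of the $\Z$ factor. You correctly observe that the composite $M \to W \to X$ is $\pi_1$-negligible, but ambient $2$-surgery requires a disk in $W$, not in $X$. Since $\tau$ is not nullhomotopic in $W$, there is no disk in $W$ bounding it, and your plan to surger $M$ all the way to $S^3$ inside $W$ cannot be completed. (A repair is possible --- for instance, map the $I$-direction into a nullhomotopic arc in $\R^2$ so that the target becomes $X\times\R^2$ rather than $X\times\R\times S^1$ --- but as written the argument breaks.) Separately, your stated ``main obstacle'' is a non-issue: in the $6$-manifold $W$ a $2$-disk is generically embedded and generically has interior disjoint from the $3$-manifold $M$, so no new self-intersections arise from the surgery disks and no auxiliary $\sigma\in\pi_3(X)$ is needed.

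The paper avoids all of this by \emph{not} closing up. It keeps $H$ as a map $F\times I\to X\times\R\times I$ with identical ends and inductively reduces the genus of $F$: choose an essential curve $\alpha\subset F$, homotope $H$ so that $H(\alpha\times\{t\})$ is independent of $t$ (using $\pi_1$-negligibility), bound $\alpha$ by an embedded disk $D_0$ in the $5$-dimensional slice $X\times\R\times\{0\}$, and compress each cross-section of $H$ along the translate $D_t$. This produces a map $H':\Sigma_{g-1}\times I\to X\times\R\times I$ with the same identical-ends property. The new self-intersections created by the compression come in pairs whose associated group elements differ by the class $[\beta]$ of a curve dual to $\alpha$; $\pi_1$-negligibility forces $[\beta]=1$, so each pair cancels in $\F_2 T_X$ and $\mu_3(H')=\mu_3(H)$ on the nose. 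Iterating down to $g=0$ reduces to \cite[Lemma~4.4]{ST}. This sidesteps the $\tau$ problem entirely and gives exact preservation of $\mu_3$ at each step, rather than preservation modulo $\mu_3(\pi_3(X))$.
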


\begin{proof}
    By \cite[Lemma 4.4]{ST}, it is sufficient to find a map 
$J : S^2 \times I \to X \times \mathbb{R} \times I$ with the above property of having identical ends and with $\mu_3(H) = \mu_3(J)$.  

Suppose $F_i$ is genus $g$. Fix an essential loop $\alpha$ in $F_0$ away from the basepoint of $F_0$.  Homotope $H$ rel boundary so that for all $t$, 
$H(\alpha \times \{t\})$ agrees with $H(\alpha \times \{0\})$ after projecting to $X\times\R$. (Here we are using the condition that $H(\alpha\times\{t\})$ is nullhomotopic in $X$.) 
	Do further homotopy as necessary so that that all of the self-intersections of the homotopy occur away from $H(\alpha \times I)$.
	
	Since $X \times \mathbb{R}\times\{0\}$ is 5-dimensional, $\alpha\times\{0\}$ bounds a framed embedded disk $D_0$ in $X \times \mathbb{R} \times \{0\}$ that is disjoint from the image of $H$ except at its boundary, where the disk is normal to $H$.  Let $D_t$ be a copy of $D_0$ translated to $X\times\mathbb{R}\times\{1\}$. Perturb $D_0$ as necessary so each $D_t$ ($0<t<1$) that intersects $H$ in its interior does so transversally. 
	Now compress $H$ along the 4-dimensional 2-handle $\cup_{t\in[0,1]}D_t\times I$, where $D_t\times I\subset X\times\mathbb{R}\times\{t\}$. In words, we simultaneously compress each cross-section of $H$ along a copy of $D_t$. This yields a map 
	$H' : \Sigma_{g-1} \times I \to X \times \mathbb{R} \times I$.  
	
	We claim $\mu_3(H')=\mu_3(H)$. (In this argument, we use Definition \ref{mudef1}.) We note that $H'$ has exactly the same self-intersections as $H$, as well as two new self-intersections for each intersection of $H$ with the interior of some $D_t$. Each such pair of self-intersections contributes the same group element to $\mu_3(H')$; see Figure \ref{fig:mu3same}. Since the codomain $\F_2 T_X$ of $\mu_3$ is characteristic 2, 
	we conclude that $\mu_3(H')=\mu_3(H)$.
	
	Proceeding inductively on $g$, we eventually find a map $J : S^2 \times I \to X \times \mathbb{R} \times I$ as desired.

\end{proof}

\begin{figure}
	  \labellist
\small\hair 2pt
 \pinlabel \textcolor{red}{$D_t$} at 50 530
  \pinlabel \textcolor{blue}{$\beta$} at 320 30
 \pinlabel $\ast$ at 165 850
 \pinlabel $p$ at 950 475
 \pinlabel $g_p$ at 880 150
 \pinlabel $q$ at 1410 290
 \pinlabel $g_q$ at 1400 150
\endlabellist
\includegraphics[width=110mm]{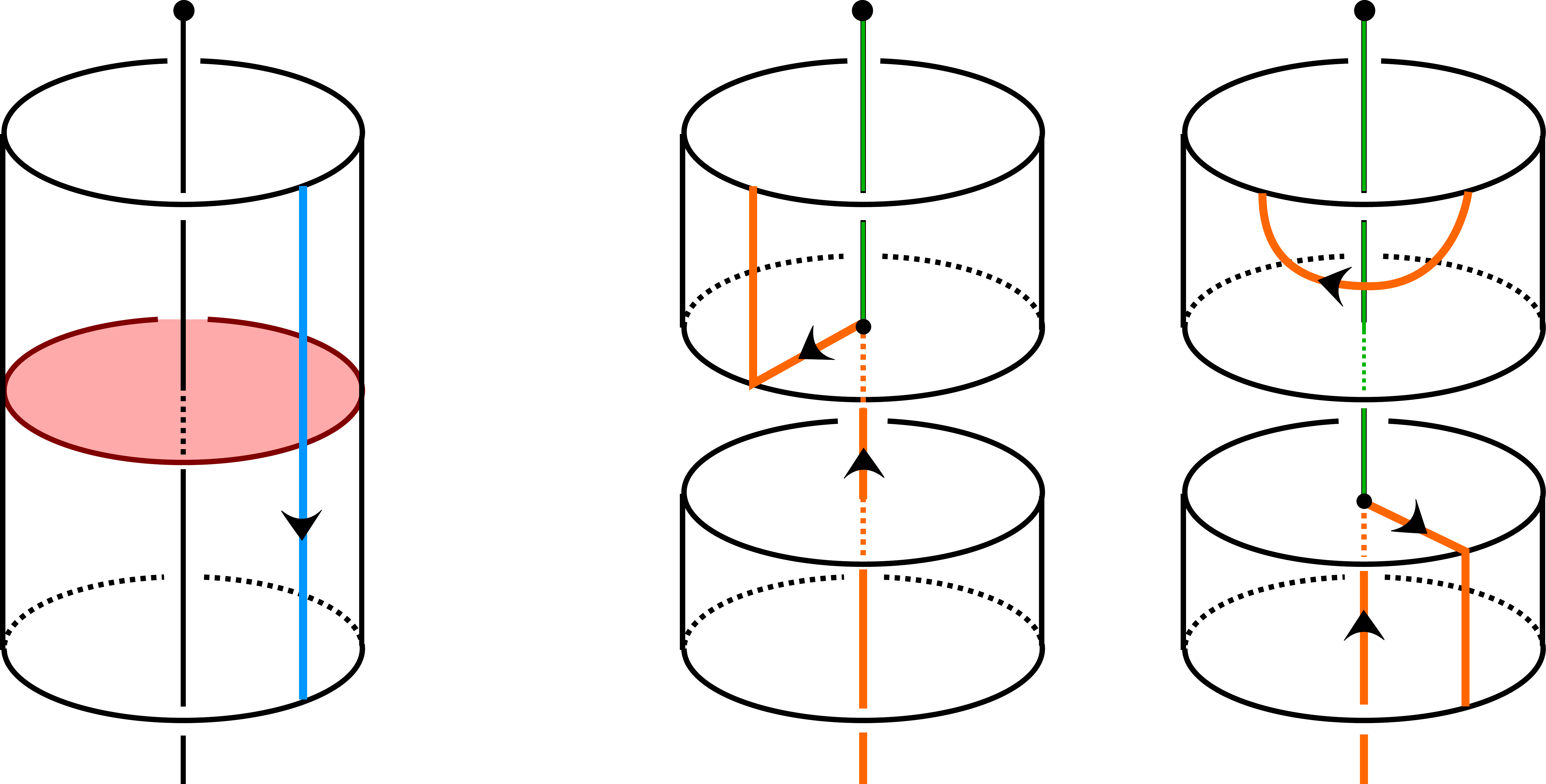}
\caption{In Lemma \ref{lem:compress}, we compress cross-sections of a homotopy $H$ of genus-$g$ surfaces to obtain a homotopy $H'$ of genus-$(g-1)$ surfaces. {\bf{Left:}} $D_t$ is a compression disk intersecting $H$ transversally. The boundary of $D_t$ is an essential curve $\alpha$ in $F_t$, so there exists a based curve $\beta$ in $F_t$ intersecting $\alpha$ exactly once. {\bf{Right:}} We draw two copies of $H'$ near $D_t$. Each point of intersection $D_t\cap H$ yields two points of self-intersection of $H'$. The group elements $g_p$, $g_q$ contributed by these points to $\mu_3(H')$ are related by $q_q=[\beta]g_p$. By assumption, $[\beta]=1$, so $g_p=g_q$.}\label{fig:mu3same}
\end{figure}

Thus, we can define $fq(F_0,F_1)$ for based-homotopic positive-genus surfaces. We now extend Lemma \ref{lem:nobasegen} to apply to positive-genus surfaces.

\begin{proposition}\label{lem:genusnobase}
Let $F_0$ and $F_1$ be $\pi_1$-negligible smoothly embedded genus-$g$ in a smooth 4-manifold $X^4$. Let $G$ be a 2-sphere immersed in $X$. Assume that $F_0$ intersects $G$ transversally once at a point $z$ and that $F_0$ and $F_1$ are homotopic. Then after a isotopy of $F_1$, $F_0$ and $F_1$ are based-homotopic with basepoint $z$.
\end{proposition}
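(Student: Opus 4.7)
The plan is to adapt the argument of Lemma~\ref{lem:nobasegen} to positive genus by using the $\pi_1$-negligibility hypothesis in place of the simple-connectedness of $S^2$. Begin with a free homotopy $H \colon F \times I \to X \times I$ from $F_0 \times 0$ to $-F_1 \times 1$, and fix $z_0 \in F$ with $F_0(z_0) = z$. Make $H$ transverse to $G \times I$ and set $L := H^{-1}(G \times I)$, a properly embedded $1$-submanifold of $F \times I$. As in the sphere case, $\partial L$ consists of $z_0 \times 0$ together with an odd number of points $y_1 \times 1, \ldots, y_k \times 1$; let $L_0$ be the component containing $z_0 \times 0$, with other endpoint $y_0 \times 1$. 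Isotope $F_1$ inside $G$ so that $F_1(y_0) = z$ and update $H$ by the isotopy trace. Since $F$ is connected, there exists a self-diffeomorphism $\psi \colon F \to F$ isotopic to the identity with $\psi(z_0) = y_0$; after precomposing $H$ with an isotopy of $F$ realizing $\psi$ at time $1$, the arc $L_0$ has boundary $z_0 \times \{0, 1\}$ with both endpoints mapping to $z$ under $H$. (In the sphere case this reparametrization step was essentially invisible because any self-diffeomorphism of $S^2$ is isotopic to the identity.)

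The heart of the argument is to show that the basepoint trace $H(z_0 \times I)$ is homotopic rel endpoints to the vertical arc $z \times I$ in $X \times I$. The concatenation $L_0 \cdot (z_0 \times I)^{-1}$ is a loop $\alpha$ based at $z_0 \times 0$ in $F \times I$, which represents an element of $\pi_1(F \times I, z_0 \times 0) \cong \pi_1(F, z_0)$. Under the deformation retract $F \times I \to F \times 0$, $\alpha$ is homotopic rel basepoint to a loop $\beta \times 0$ with $\beta$ a loop in $F$ at $z_0$; hence $H(\alpha)$ is homotopic in $X \times I$ to $F_0(\beta) \times 0$, which is nullhomotopic in $X \times 0 \subset X \times I$ by the $\pi_1$-negligibility of $F_0$. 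Therefore $H(L_0)$ and $H(z_0 \times I)$ are homotopic rel endpoints in $X \times I$. On the other hand, after a generic perturbation disjoining $H(L_0)$ from the self-intersections of $G$, the arc $H(L_0) \subset G \times I$ lifts uniquely to an arc in $S^2 \times I$ (where $S^2$ is the domain of the immersion parametrizing $G$) from $\tilde z \times 0$ to $\tilde z \times 1$, and this lift is homotopic rel endpoints to $\tilde z \times I$ because $S^2 \times I$ is simply connected. Pushing back gives $H(L_0)$ homotopic rel endpoints to $z \times I$ in $X \times I$, and concatenating the two homotopies yields $H(z_0 \times I)$ homotopic rel endpoints to $z \times I$.

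Finally, use this rel-endpoints nullhomotopy of the basepoint trace to modify $H$ in a tubular neighborhood of $z_0 \times I \subset F \times I$, producing a new homotopy $H'$ that agrees with $H$ outside the neighborhood and satisfies $H'(z_0, t) = (z, t)$ for all $t \in I$; this is the desired based homotopy. The main delicate point is bookkeeping: the sequence of modifications---isotoping $F_1$ inside $G$, reparametrizing by $\psi$, and finally contracting the basepoint trace---must be coordinated so that they compose coherently. Since each modification can be carried out in an arbitrarily small neighborhood of the relevant locus, they fit together without introducing new intersections or invalidating earlier steps.
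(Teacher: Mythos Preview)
Your argument is correct and follows essentially the same route as the paper's proof: track a component $L_0$ of $H^{-1}(G\times I)$, isotope $F_1$ so the endpoints of $H(L_0)$ are both $z$, use $\pi_1$-negligibility to compare $H(L_0)$ with the basepoint trace, and use $H(L_0)\subset G\times I$ to see the trace is nullhomotopic. You are simply more explicit than the paper in two places: you spell out the reparametrization by $\psi$ needed to make $\partial L_0=z_0\times\{0,1\}$ (the paper just asserts this after the isotopy of $F_1$), and you separate the two homotopies---$H(L_0)\simeq H(z_0\times I)$ via $\pi_1$-negligibility, and $H(L_0)\simeq z\times I$ via $G\times I$---where the paper's wording runs them together.
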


The proof is essentially the same as for Lemma \ref{lem:nobasegen}, but we restate the proof here for completeness.

\begin{proof}
Let $H:\Sigma_g\times I\to X\times I$ be a free homotopy of $F_0$ to $F_1$. 
Take $H$ to be transverse to $G$ and consider $L=H^{-1}(G\times I)$, a properly embedded 1-dimensional submanifold of $\Sigma_g\times I$. Note the boundary of $L$ consists of $z_0\times0$ and an odd number of points of the form $y_i\times 1$. Let $L_0$ be the component of $L$ containing $z_0\times 0$ and assume the other endpoint of $L_0$ is $y_0\times 1$. Compose $H$ with an isotopy of $F_1$ taking $H(y_0)$ to $z$ by isotopy in $G$ and set the resulting composed homotopy to be $H$. Now $\partial L_0=z_0\times\{0,1\}$. Since $\pi_1(H(\Sigma_g\times I))$ maps trivially into $\pi_1(X\times I)$, $H(L_0)$ is homotopic rel boundary to $z\times I$. Moreover, 
we have $H(L_0)\subset G\times I$ away from self-intersections of $G\times I$, so $H(L_0)$ is a contractible based loop. Contracting this loop yields a based homotopy from $\Sigma_0$ to $\Sigma_1$.
\end{proof}

\begin{proof}[Proof of Theorem \ref{maintheorem}]






As in \S\ref{sec:2spheres}, we implicitly define $\fq(F_0,F_1)$ to be $\fq(F_0,F'_1)$, where $F'_1$ is isotopic to $F_1$ and based-homotopic to $F_0$ (via Lemma \ref{lem:genusnobase}). Since isotoping $F_1$ does not affect whether $F_0$ and $F_1$ are concordant, the choice of $F'_1$ does not matter. The proof of Theorem \ref{maintheorem} then follows exactly as in the case when $F_0,F_1$ are genus-zero as in \S\ref{sec:2spheres}.
\end{proof}


All of the above results also hold in the topological category - as mentioned also in \cite{NS}.  One way to see this is by smoothing any topological manifolds involved away from a point and then applying the above results.  
\section{Examples}\label{sec:examples}

	In this section we give two examples from the literature and an example of our own that illustrate the necessity of the hypotheses in Theorem \ref{maintheorem}.  Example \ref{hannahexample} is due to Hannah Schwartz and it demonstrates the necessity of the condition that $fq = 0$.  We use this example to illustrate our view of $fq$ in terms of lifts of covers from \cite{Hannah}. 
	
	We then construct Example \ref{stongexample}: two homotopic spheres with vanishing Freedman-Quinn invariant and a common embedded unframed dual sphere that are not concordant.  This illustrates the necessity of our condition that the dual sphere be framed as well as the necessity of the condition on the framing of the sphere in Gabai's lightbulb trick.  
	
	 Finally, Example \ref{stongexample} highlights the relationship between Theorem \ref{maintheorem} and a theorem in \cite{ST}, namely, we are assuming that one of the two spheres has a null homotopic meridian and concluding that the surfaces are concordant, whereas in \cite{ST}, the authors are assuming that both spheres have a common geometric dual and concluding that the surfaces are isotopic.  The assumptions we make (and the conclusion we derive) are weaker than those in the theorem in \cite{ST} (the 4-dimensional light bulb theorem).  The example we give, which appears in \cite{Sato}, demonstrates that this weaker conclusion is strictly necessary 
	and highlights the difference between concordance and isotopy.

\begin{example}\label{hannahexample}
    In \cite{Hannah}, Schwartz constructs two 2-spheres $S_0,S_1$ in a 4-manifold $X^4$ with $\pi_1(X)=\langle g\mid g^2=1\rangle$ containing a 2-sphere $G$ with trivial normal bundle intersecting $S_0$ and $S_1$ each transversally once. This example is interesting because Schwartz shows that $S_0$ and $S_1$ are not isotopic or even concordant. She then computes $\fq(S_0,S_1)=g$, illustrating that the Freedman-Quinn invariant can be used in this example to distinguish $S_0$ and $S_1$. In Figure \ref{fig:hannahex}, we draw a schematic of the universal cover of $X$ and the lifts of $S_0$ and $S_1$ in order to re-compute $\fq(S_0,S_1)$ using the definition of $\fq$ through covering spaces.

 \begin{figure}
 \includegraphics[width=135mm]{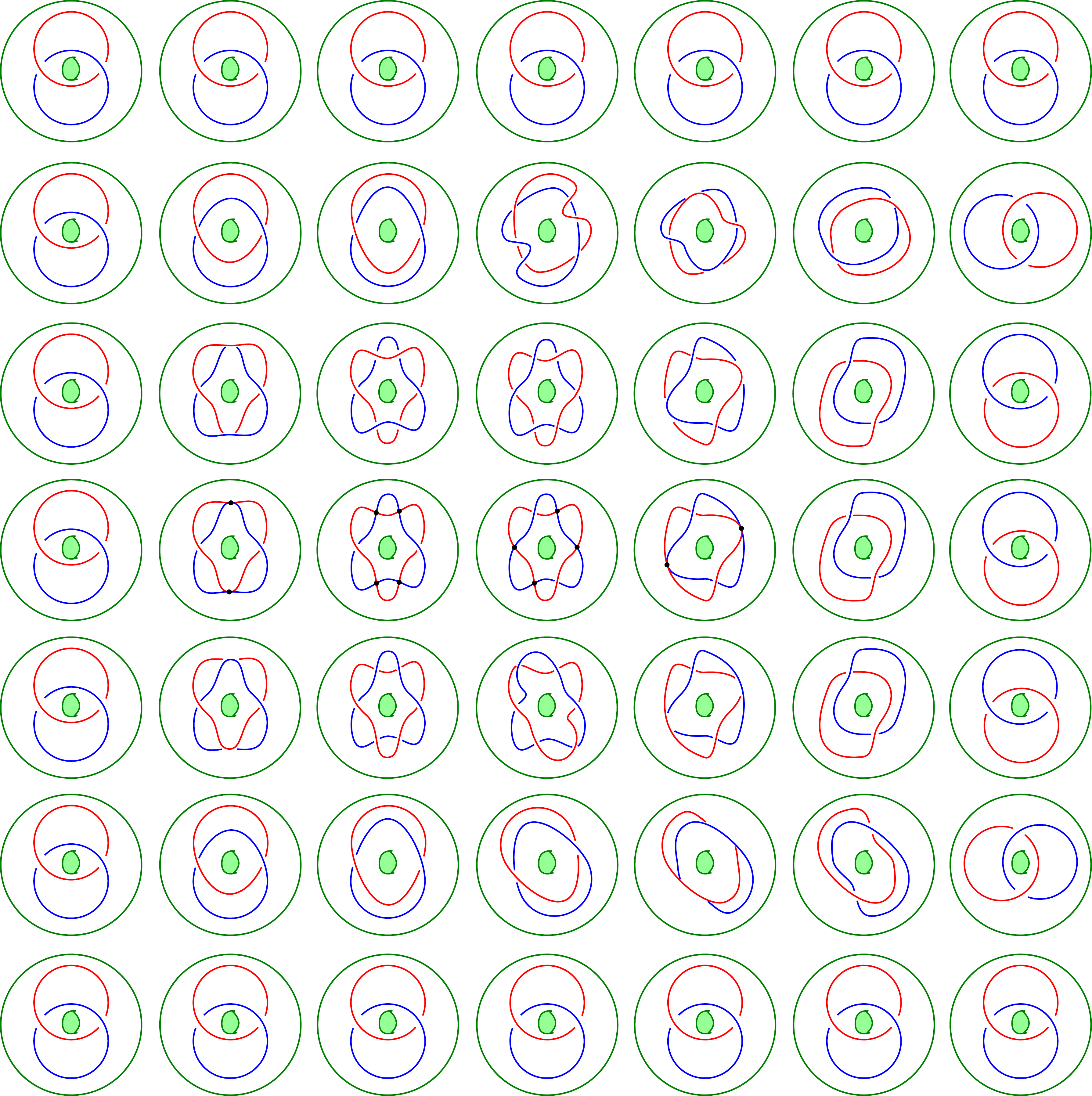}
	\caption{A schematic of $\widetilde{X}\times I$, where $X$ is the manifold constructed by Schwartz \cite{Hannah}. Each column represents a copy of $\widetilde{X}\times t$, so that the $I$ axis is horizontal. The action of $g$ is to rotate each cell through an angle of $\pi$. {\bf{Left column:}} A movie of the lift $\widetilde{S}_0$ of $S_0$ to $\widetilde{X}$. {\bf{Right column:}} A movie of the lift $\widetilde{S}_1$ of $S_1$ to $\widetilde{X}$. {\bf{Whole diagram:}} We find equivariant concordances between the components of $\widetilde{S}_0$ to the components of $\widetilde{S}_1$; one concordance is colored red and the other is colored blue. Even though these concordances are individually embedded, they intersect in a single circle $C$ which is preserved setwise by the action of $g$. We conclude that $\fq(S_0,S_1)=g\neq 0\in \F_2 T_x/\mu_3(\pi_3(X))=\F_2\{g\}$. Thus, as shown by Schwartz \cite{Hannah}, the spheres $S_0$ and $S_1$ are not concordant.}\label{fig:hannahex}
\end{figure}


\end{example}

\begin{example}\label{stongexample}
We construct two 2-spheres $S_1,S_2$ that are based-homotopic, have embedded dual spheres and $\fq(S_1,S_2)=0$, and yet $S_1$ and $S_2$ are not concordant. In this example, the dual spheres to $S_1$ and $S_2$ are necessarily not framed; this demonstrates the necessity of the word, ``framed," in Theorem \ref{maintheorem}. 

Our obstruction to concordance is Stong's Kervaire-Milnor invariant \cite{stong}, denoted $\km$, which can be rephrased as secondary obstruction that exists for homotopic $s$-characteristic spheres in 4-manifolds that have vanishing Freedman-Quinn invariant.  In a forthcoming work we will give a detailed exposition of this invariant in the context of concordance of surfaces, so here we just state its simplest properties that will be of use to us.  The invariant $\km$ is valued in $H_1(X; \mathbb{Z}/2)$ and is computed from an immersion $H : S^2 \times I \to X \times I$ with ends two s-characteristic spheres $S_0 \subset X \times\{0\}$ and $S_1 \subset X \times \{1\}$.  Assuming that the submanifold of self-intersections of $H$ is just two circles that double cover their image under $H$, and such that these two circles form a Hopf link in $S^2 \times I$, then $\km$ is equal to the element of $H_1(X; \mathbb{Z}/2)$ that we obtain when applying $H$ to an arc in $S^2 \times I$ whose endpoints are points on the two different components  of the Hopf link that get identified by $H$ and consider the resulting circle as an element of $H_1(X; \mathbb{Z}/2)$.

Let $S_1$ be the unknotted sphere in $S^1\times S^3$. Let $S'_1$ be the immersed sphere obtained from $S_1$ by performing a finger move as in Figure \ref{fig:stonghopf}. In Figure \ref{fig:stonghopf}, we illustrate the framed Whitney disk $W$ along which a Whitney move would undo the finger move (thus yielding $S$ again). The boundary of $W$ consists of two arcs $\alpha,\beta$ in $S'$ between the self-intersections of $S'$. We also draw another pair of arcs $\alpha',\beta'$ between the self-intersections of $S'$, with the ends of $\alpha',\beta'$ contained in the same sheets as $\alpha,\beta$, respectively. However, $\alpha'$ intersects $\beta$ transversally once and $\beta'$ intersects $\alpha$ transversally once. If there exists a clean, framed Whitney disk $W'$ with boundary $\alpha'\cup \beta'$, then the homotopy from $S_1$ to a sphere $S_2$ that involves the pictured finger move followed by a Whitney move along $W'$ would have self-intersection a Hopf link. We could then conclude that $\km(S_1,S_2)$ is the generator of  $H_1(S^1\times S^3;\mathbb{Z}/2)\cong\mathbb{Z}/2$.

However, rather than a clean Whitney disk with boundary $\alpha'\cup\beta'$, in Figures \ref{fig:stongdisk1} and \ref{fig:stongdisk2} we obtain a framed Whitney disk $W'$ that intersects $S'$ twice. Now connect sum two copies of $\CP^2$ to $S^1\times S^3$ and tube $S_1$ (and hence $S'$) to a copy of $\CP^1$ in each $\CP^2$. This yields two disjoint unframed dual spheres to $S'$. By tubing $W'$ at each intersection with $S'$ to a different dual, we can obtain an embedded Whitney disk $\widetilde{W}$ in $(S^1\times S^3)\# 2\CP^2$ -- since we tube $W'$ to an even number of spheres with odd Euler number, $\widetilde{W}$ is still framed.

Now let $S_2$ be the 2-sphere in $(S^1\times S^3)\# 2\CP^2$ obtained from $S_1$ by a finger move about the $S^1$ factor in $S^1\times S^3$ (as in Figure \ref{fig:stonghopf}) followed by a Whitney move along $\widetilde{W}$. Note that $S_1$ and $S_2$ are based-homotopic, $S_1$ has an unframed dual, and  $\fq(S_1,S_2)=0$ automatically since $\pi_1((S^1\times S^3)\# 2\CP^2)$ has no 2-torsion. Moreover, $[S_i]$ is characteristic, so $\km(S_1,S_2)$ is defined. Then as desired, $\km(S_1,S_2)=1$ in $H_1((S^1\times S^3)\# 2\CP^2;\mathbb{Z}/2)\cong\mathbb{Z}/2$, so $S_1$ and $S_2$ are not concordant.  Note that this also gives an example where Gabai's lightbulb trick fails when the dual is not framed.  
\end{example}

\begin{figure}
    \centering
    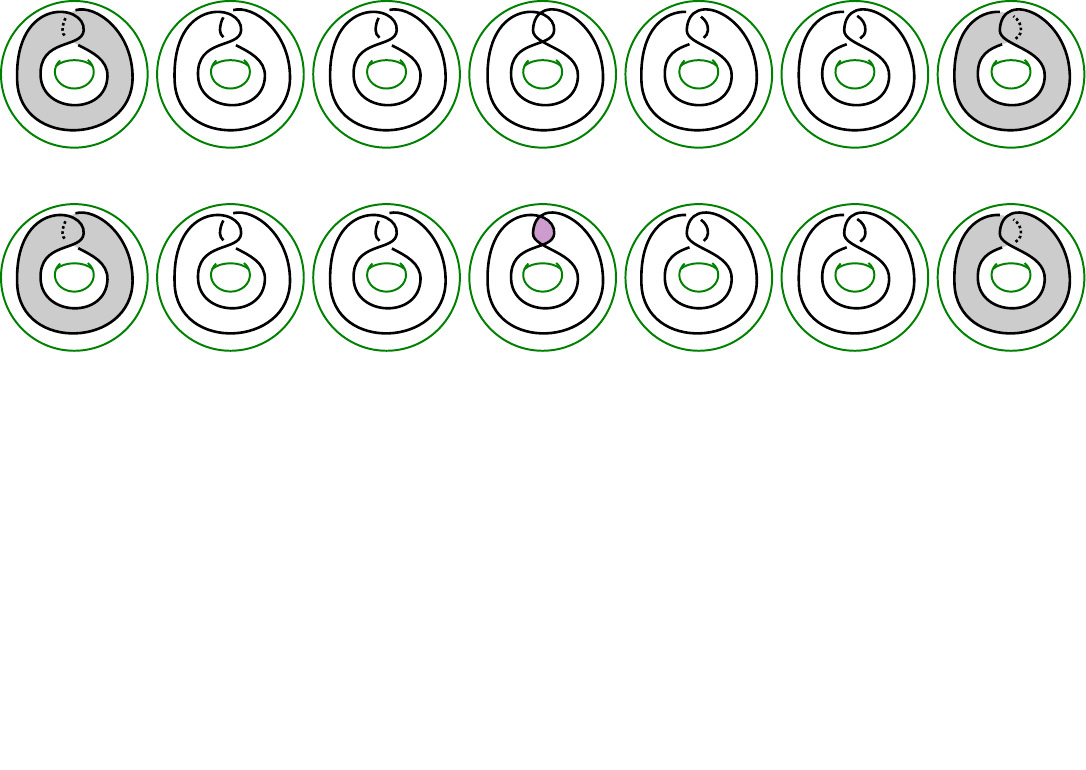
    \caption{{\bf{Top row:}} The 2-sphere $S'_1\subset S^1\times S^3$, obtained from performing a finger move to the unknotted sphere. Each picture is 3-dimensional; the horizontal axis is taken to be the fourth dimension. {\bf{Second row:}} The Whitney disk $W$. Performing a Whitney move to $S'_1$ along $W$ yields the unknotted sphere. {\bf{Third row:}} A pair of arcs $\alpha',\beta'$ in $S'_1$. {\bf{Bottom row:}} The arcs $\alpha,\beta,\alpha',\beta'$ in the domain of the immersion of $S'$. Performing a Whitney move using a disk with boundary $\alpha'\cup\beta'$ on $S'$ yields a homotopy whose self-intersection is a Hopf link.}
    \label{fig:stonghopf}
\end{figure}

\begin{figure}
    \centering
\begingroup%
  \makeatletter%
  \providecommand\color[2][]{%
    \errmessage{(Inkscape) Color is used for the text in Inkscape, but the package 'color.sty' is not loaded}%
    \renewcommand\color[2][]{}%
  }%
  \providecommand\transparent[1]{%
    \errmessage{(Inkscape) Transparency is used (non-zero) for the text in Inkscape, but the package 'transparent.sty' is not loaded}%
    \renewcommand\transparent[1]{}%
  }%
  \providecommand\rotatebox[2]{#2}%
  \newcommand*\fsize{\dimexpr\f@size pt\relax}%
  \newcommand*\lineheight[1]{\fontsize{\fsize}{#1\fsize}\selectfont}%
  \ifx\svgwidth\undefined%
    \setlength{\unitlength}{371.71073373bp}%
    \ifx\svgscale\undefined%
      \relax%
    \else%
      \setlength{\unitlength}{\unitlength * \real{\svgscale}}%
    \fi%
  \else%
    \setlength{\unitlength}{\svgwidth}%
  \fi%
  \global\let\svgwidth\undefined%
  \global\let\svgscale\undefined%
  \makeatother%
  \begin{picture}(1,0.70292827)%
    \lineheight{1}%
    \setlength\tabcolsep{0pt}%
    \put(0,0){\includegraphics[width=\unitlength,page=1]{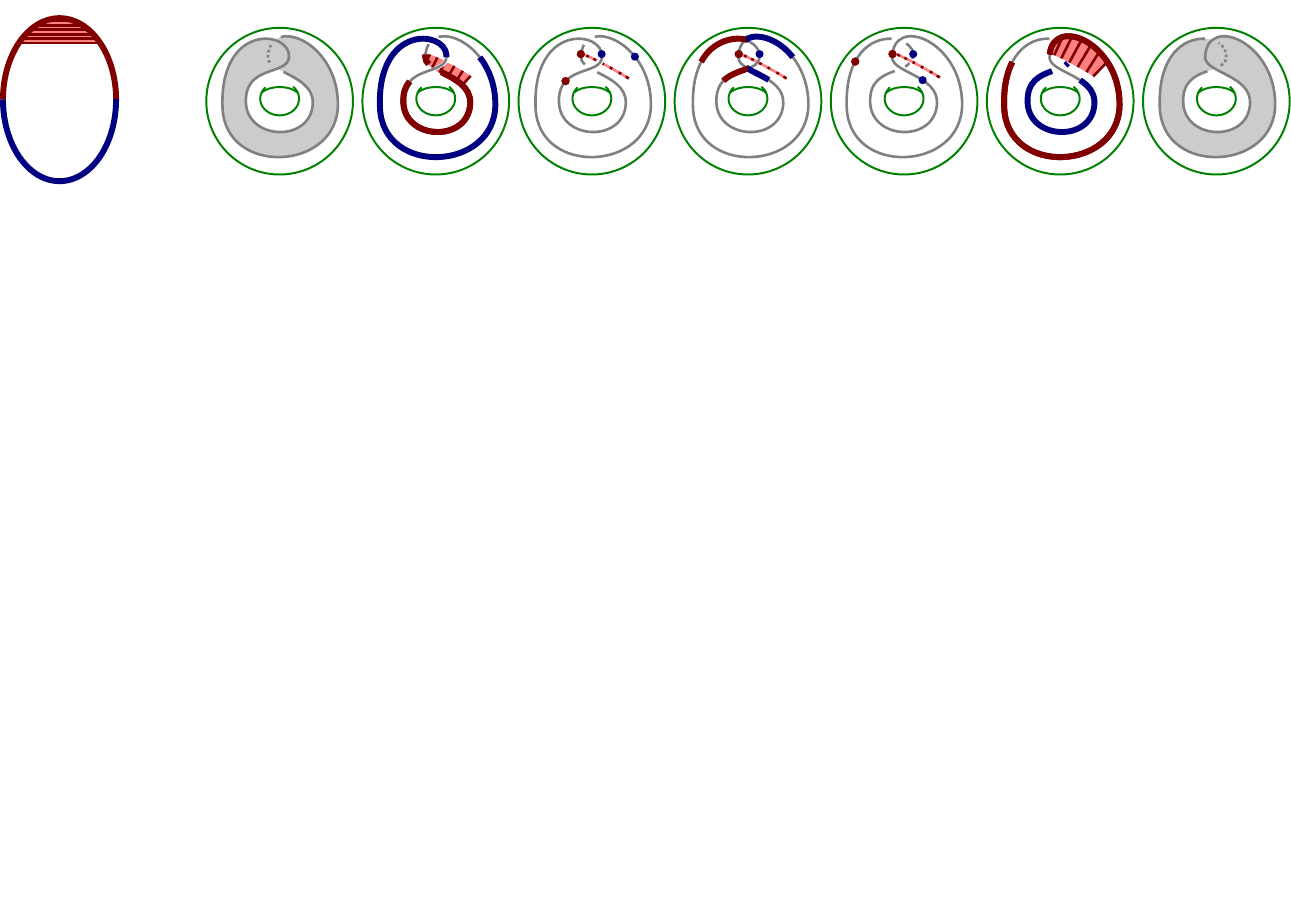}}%
    \put(0.55442255,0.68956373){\color[rgb]{0,0,0}\makebox(0,0)[lt]{\lineheight{1.25}\smash{\begin{tabular}[t]{l}$W'\cap S'$\end{tabular}}}}%
    \put(0,0){\includegraphics[width=\unitlength,page=2]{stongdisk1.pdf}}%
    \put(0.51210734,0.12651517){\color[rgb]{0,0,0}\makebox(0,0)[lt]{\lineheight{1.25}\smash{\begin{tabular}[t]{l}$W'\cap S'$\end{tabular}}}}%
    \put(0,0){\includegraphics[width=\unitlength,page=3]{stongdisk1.pdf}}%
  \end{picture}%
\endgroup%

    \caption{In each row, we draw a portion of a Whitney disk $W'$ for $S'$, as in Figure \ref{fig:stonghopf}. On the left, we illustrate the corresponding portion of $W'$. We can explicitly see the two points in which $W'$ intersects $S'$.  We illustrate the disk in pieces like this to give some intuition for its construction - a complete view is given in the following figure.  }
    \label{fig:stongdisk1}
\end{figure}

\begin{figure}
    \centering
\begingroup%
  \makeatletter%
  \providecommand\color[2][]{%
    \errmessage{(Inkscape) Color is used for the text in Inkscape, but the package 'color.sty' is not loaded}%
    \renewcommand\color[2][]{}%
  }%
  \providecommand\transparent[1]{%
    \errmessage{(Inkscape) Transparency is used (non-zero) for the text in Inkscape, but the package 'transparent.sty' is not loaded}%
    \renewcommand\transparent[1]{}%
  }%
  \providecommand\rotatebox[2]{#2}%
  \newcommand*\fsize{\dimexpr\f@size pt\relax}%
  \newcommand*\lineheight[1]{\fontsize{\fsize}{#1\fsize}\selectfont}%
  \ifx\svgwidth\undefined%
    \setlength{\unitlength}{392.40023876bp}%
    \ifx\svgscale\undefined%
      \relax%
    \else%
      \setlength{\unitlength}{\unitlength * \real{\svgscale}}%
    \fi%
  \else%
    \setlength{\unitlength}{\svgwidth}%
  \fi%
  \global\let\svgwidth\undefined%
  \global\let\svgscale\undefined%
  \makeatother%
  \begin{picture}(1,1.21884263)%
    \lineheight{1}%
    \setlength\tabcolsep{0pt}%
    \put(0,0){\includegraphics[width=\unitlength,page=1]{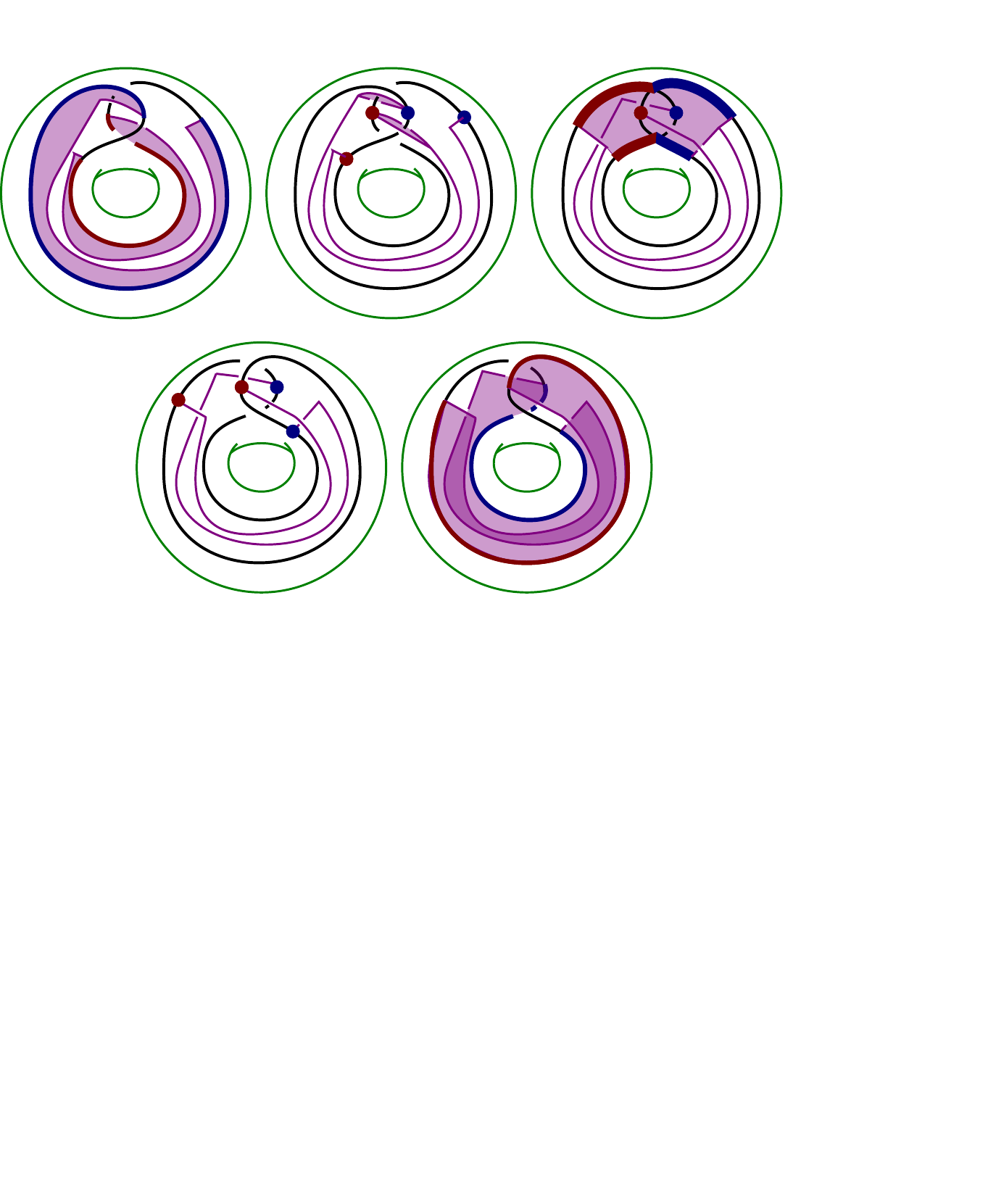}}%
    \put(0.21692504,1.19882882){\color[rgb]{0,0,0}\makebox(0,0)[lt]{\lineheight{1.25}\smash{\begin{tabular}[t]{l}$W'\cap S'=\{$2 pts$\}$\end{tabular}}}}%
    \put(0,0){\includegraphics[width=\unitlength,page=2]{stongdisk2.pdf}}%
  \end{picture}%
\endgroup%

    \caption{The Whitney disk $W'$ from Figure \ref{fig:stongdisk1}. In the top two rows, we draw the whole disk in $S^1\times S^3$, perturbing slightly to avoid self-intersections. In the bottom two rows, we perturb $S'$ to make the framing of $W'$ clear. At every point in $W'$, consider a normal vector which points upward within one 3-dimensional cross-section. This induces a Whitney framing of $W'$.}
    \label{fig:stongdisk2}
\end{figure}



\begin{example}\label{satoexample}
	The following appears in \cite{Sato}.  We give two 2-spheres that are concordant but not isotopic in $S^2 \times S^2$.  The first sphere is $S_0:= S^2 \times \pt$ which satisfies the hypothesis that the meridian circle is null homotopic in the complement.  Any sphere in the same homotopy class of $S_0$ is concordant to $S_0$, by Theorem \ref{simply-connected}. We aim to construct a sphere $S_1$ with $[S_1] = [S_0]$ but $\pi_1(S^2 \times S^2 - S_0) = 1 \neq \pi_1(S^2 \times S^2 - S_1)$ and therefore conclude that $S_0$ and $S_1$ are not isotopic.

	Let $K$ be a 2-knot (a knotted 2-sphere) in $S^4$ and let $C$ be a loop embedded in $S^4 - K$ that is homologous to a meridian of $K$ in $S^4 - K$.  Surgery on $C$ yields $S^2\times S^2$, in which we may view $K$ as the sphere $K(C)$. 
	The assumption that $C$ is homologous to a meridian of $S^4 - K$ ensures that the resulting sphere $K(C)$ represents the homology class $[K(C)] = [S_0]$.

	By Seifert-van Kampen, $\pi_1(S^2 \times S^2 - K(C)) = \pi_1(S^4 - K)/\langle\langle C\rangle\rangle$, where $\langle\langle C\rangle\rangle$ is the normal subgroup generated by $C$.  
	Take $K$ to be the 5-twist spun trefoil knot \cite{zeeman}. We have $\pi_1(S^4 - K) = G \times \mathbb{Z}$ where $G = <a, b ; a^3 = b^5 = (ab)^2>$ is a perfect group of order 120 (the binary dodecahedral group). Take $C$ to generate the $\mathbb{Z}$ factor and set $S_1:=K(C)$. Thus, $S_0$ and $S_1$ satisfy the hypotheses of Theorem \ref{maintheorem} and are indeed concordant, but are not ambiently isotopic.
	
	See Figure \ref{fig:sato} for an illustration; we include this partly because extracting the generator of the $\mathbb{Z}$ summand from Zeeman's \cite{zeeman} original description of $\pi_1(S^4-K)$ takes some effort.
\end{example} 

 \begin{figure}
 	  \labellist
\small\hair 2pt
 \pinlabel $0$ at 1430 950
 \pinlabel $0$ at 1720 680
  \pinlabel $C$ at 800 680
 \pinlabel $a$ at 240 740
  \pinlabel $b$ at -30 1000
   \pinlabel $C$ at 400 330
   \pinlabel $0$ at 1335 330
   \pinlabel $0$ at 1200 620
\endlabellist
 \includegraphics[width=120mm]{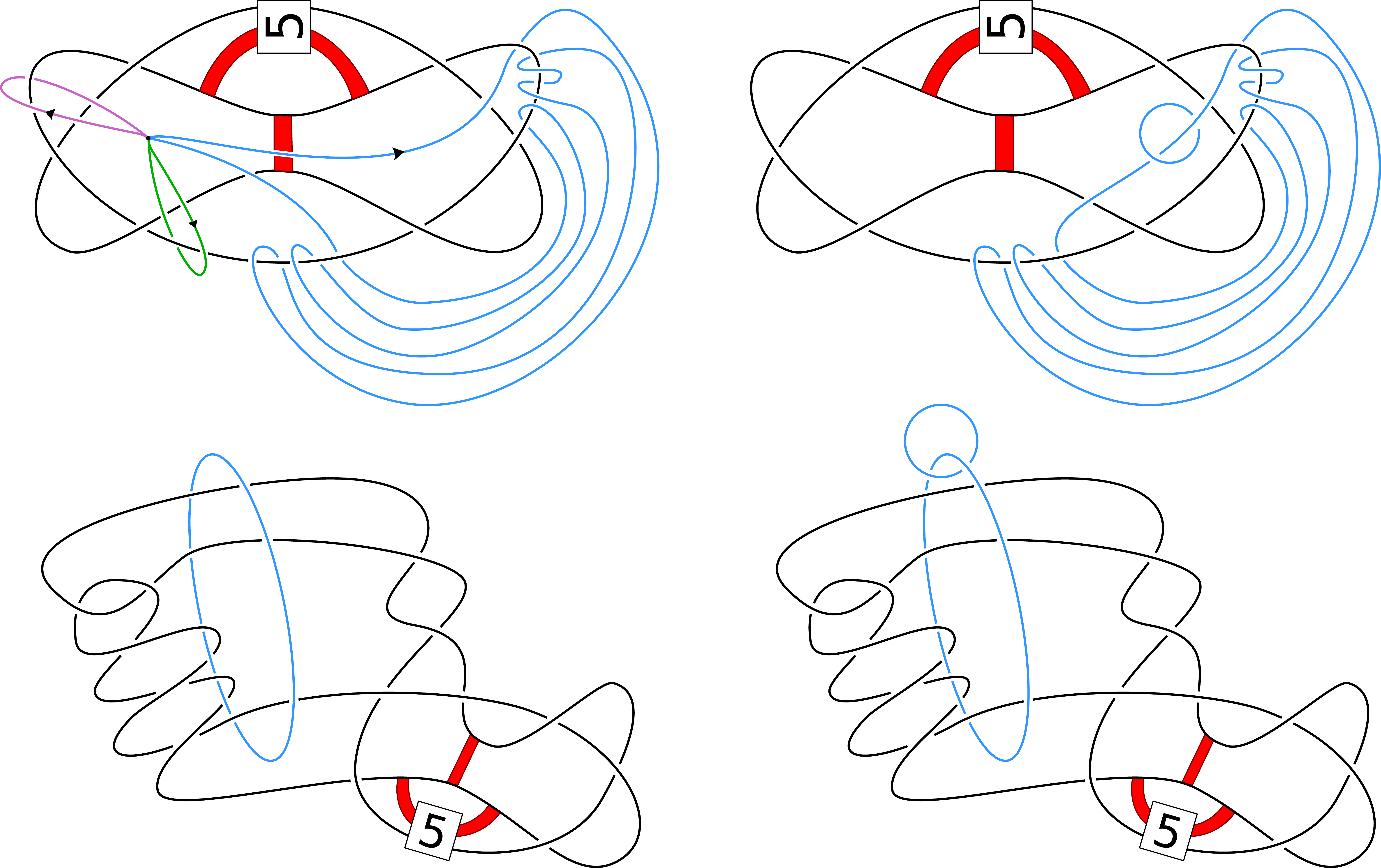}
	\caption{In this figure we use banded unlink diagrams to describe surfaces. We refer the reader to \cite{bandpaper} for exposition on these diagrams. {\bf{Top left:}} the standard diagram of the 5-twist spun trefoil knot $K$. We indicate two meridians $a$ and $b$ which generate $\pi_1(S^4-K)$ and a loop  $C$ in the class $w=b^{-1}a^{-1}b^2a^{-1}b^{-1}a$, which generates the $\Z$ summand of $\pi_1(S^4-K)=\langle a,b\mid baba^{-1}b^{-1}a^{-1}=1,a^5b=ba^5\rangle=\langle x,y,w\mid x^5=(xy)^3=(xyx)^2,[x,w]=[y,w]=1\rangle$ where $x=ab^{-1}$ and $y=b^{-1}a$ . {\bf{Top right:}} Surgery along $C$ yields a sphere $K(C)$ in $S^2\times S^2$ which is homotopic but not isotopic to $S^2\times\pt$. {\bf{Bottom left:}} We isotope the diagram of $K$ to simplify $C$. {\bf{Bottom right:}} An alternate picture of $K(C)\subset S^2\times S^2$.}\label{fig:sato}
\end{figure}

\begin{remark}
By connect summing the spheres in Example \ref{hannahexample} with $g$ trivial tori, we may obtain genus-$g$ $\pi_1$-negligible surfaces $F_0$ and $F_1$ in $X^4$ that are based-homotopic and have dual spheres, but have nonvanishing Freedman-Quinn invariant and thus fail to be concordant. Similarly, by connect summing the spheres in Example \ref{satoexample} with $g$ trivial tori, we may obtain genus-$g$ surfaces $F_0$ and $F_1$ in $S^2\times S^2$ that are based-homotopic with $\fq(F_0,F_1)=0$ and so that $F_0$ has a dual sphere, but $F_0$ and $F_1$ are not isotopic (although they are, of course, concordant).

Connect summing the spheres in Example \ref{stongexample} with trivial tori likely gives examples of positive-genus $\pi_1$-negligible surfaces illustrating the necessity of the dual sphere in Theorem \ref{maintheorem} being framed. However, the statements in Stong's work \cite{stong} do not immediately apply in this setting, so these examples would require a more detailed explanation. We plan to discuss this in forthcoming work.
\end{remark}

\section{Further questions: nontriviality of $\pi_1(F_i)\into\pi_1(X)$}\label{sec:questions}

So far, we have required that positive genus surfaces $F_0, F_1$ be $\pi_1$-negligible. This condition ensures that $F_i$ and homotopies of $F_0$ to $F_1$ lift to $\widetilde{X}$ and $\widetilde{X}\times I$ respectively, and allows us to define the Freedman-Quinn invariant $\fq(F_0,F_1)$ if $F_0$ and $F_1$ are based-homotopic. When $F_0$ and $F_1$ are based-homotopic but $\pi_1(F_i)$ maps nontrivially into $X^4$, then $\fq(F_0,F_1)$ is undefined.

\begin{question}
Let $F_0$ and $F_1$ be genus-g based-homotopic surfaces in a 4-manifold $X^4$. Assume that a meridian of $F_0$ is null-homotopic in $X-F_0$.

\begin{enumerate}
    \item What conditions on the homotopy between $F_0$ and $F_1$ ensure that $F_0$ and $F_1$ are concordant?\label{q1}
    \item What conditions obstruct $F_0$ and $F_1$ from being concordant?\label{q2}
    \item The same as Questions \ref{q1} and \ref{q2}, if we assume $[F_0]=[F_1]\in H_2(X;\Z\pi_1(X)).$
\end{enumerate}

Note that $[F_0]=[F_1]\in H_2(X;\Z\pi_1(X))$ implies that the lifts of $F_0$ and $F_1$ to $\widetilde{X}$ are componentwise homotopic. This condition holds $F_0$ and $F_1$ are $\pi_1$-negligible.
\end{question}


 \bibliographystyle{alpha}
 \bibliography{biblio}







\end{document}